\newenvironment{narrow}[2]{
 \begin{list}{}{%
  \setlength{\topsep}{0pt}%
  \setlength{\leftmargin}{#1}%
  \setlength{\rightmargin}{#2}%
  \setlength{\listparindent}{\parindent}%
  \setlength{\itemindent}{\parindent}%
  \setlength{\parsep}{\parskip}%
 }%
\item[]}{\end{list}} 
\newcommand{\imageplain}[2] 
{
      {\includegraphics[scale=#1]{#2.jpeg}}
}
\newcommand{\image}[2] 
{
      \begin{center} 
	\includegraphics[scale=#1]{#2.jpeg}
      \end{center}
}
\newcommand{\nref}[1]{\hyperref[#1]{\ref*{#1}}}
\newcommand{\Ker}{{\rm Ker}}
\newcommand{\Hom}{{\rm Hom}}
\renewcommand{\Im}{{\rm Im}}
\newcommand{\Aut}{{\rm Aut}}
\newcommand{\ad}[2]{{\rm ad}\left(#1\right)\left(#2\right)}
\newcommand{\adn}[3]{{\rm ad}^{#1}\left(#2\right)\left(#3\right)}
\renewcommand{\mod}{\;{\rm mod}\;}
\renewcommand{\O}{\mathcal{O}} 	
\newcommand{\B}{\mathcal{B}} 	
\newcommand{\g}{\mathfrak{g}} 	
\newcommand{\Z}{\mathbb{Z}}  	
\newcommand{\C}{\mathbb{C}}  	
\renewcommand{\k}{\Bbbk}  	
\newcommand{\F}{\mathbb{F}}  	
\newcommand{\D}{\mathbb{D}}  	
\newcommand{\Q}{\mathbb{Q}}  	
\renewcommand{\S}{\mathbb{S}}  	
\newcommand{\A}{\mathbb{A}}  	
\renewcommand{\H}{\mathcal{H}} 
\newcommand{\ydm}{Yetter-Drinfel'd module }
\newcommand{\ydms}{Yetter-Drinfel'd modules }
\newcommand{\ydmP}{Yetter-Drinfel'd module}
\newcommand{\ydmsP}{Yetter-Drinfel'd modules}
\theoremstyle{plain}
\newtheorem{theorem}{Theorem}[section]
\newtheorem{corollary}[theorem]{Corollary}
\newtheorem{definition}[theorem]{Definition}
\newtheorem{example}[theorem]{Example}
\newtheorem*{exampleX}{Example}
\newtheorem{lemma}[theorem]{Lemma}
\newtheorem{remark}[theorem]{Remark}
\begin{document}

  \thispagestyle{empty}
  \vspace*{-2cm}
   \begin{flushright}
     ZMP-HH / 13-4 \\
     Hamburger Beitr{\"a}ge zur Mathematik Nr. 471 \\
   \end{flushright}
  \vspace{1.5cm}
  \begin{center}
    \Large 
  New Large-Rank Nichols Algebras Over \\ Nonabelian Groups 
  With Commutator Subgroup $\Z_2$\\
  \vspace{.3cm}
    \normalsize
  ({\bf long version}, a shortend version appeared in Journal of Algebra 419)
  \end{center}
  \vspace{0.5cm}
  \begin{center}	
    Simon Lentner \\
    Algebra and Number Theory (AZ), 
    Universit{\"a}t Hamburg,\\
    Bundesstra{\ss}e 55, D-20146 Hamburg \\
  \end{center}
  \vspace{0.5cm}

\begin{abstract}
In this article, we explicitly construct new finite-dimensional,
indecomposable Nichols algebras with Dynkin diagrams of type
$A_n,C_n,$ $D_n,E_{6,7,8},F_4$ over any group $G$ with commutator subgroup
isomorphic to $\Z_2$.
The construction is generic in the sense that the type just depends on the rank
and center of $G$, and thus positively answers for all groups of this class a
question raised by Susan Montgomery in 1995 \cite{Mont95}\cite{AS02}.\\

Our construction uses the new notion of a covering Nichols algebra as a special
case of a covering Hopf algebra \cite{Len12} and produces non-faithful Nichols
algebras. We give faithful examples of Doi twists for type
$A_3,C_3,D_4,C_4,F_4$ over several nonabelian groups of order $16$ and $32$.
These are hence the first known examples of nondiagonal, finite-dimensional,
indecomposable Nichols algebras of rank $>2$ over nonabelian groups.\\
\end{abstract}

\makeatletter
\@setabstract
\makeatother

\newpage

\tableofcontents
\newpage

\section{Introduction}

The Nichols algebra $\B(M)$ of a \ydm $M$ over a group $\Gamma$ is a quotient
of the tensor algebra $T(M)$. It has a natural structure of a Hopf algebra in
a braided category satisfying a certain universal property.
Finite-dimensional Nichols algebras
arise naturally e.g. as quantum Borel part in the classification of
finite-dimensional pointed Hopf algebras \cite{AS10}, such as the small
quantum groups $u_q(\g)$.
Heckenberger classified all finite-dimensional Nichols algebras for
$\Gamma$ abelian \cite{Heck09}. For Nichols algebras over arbitrary semisimple
Yetter-Drinfeld modules, Andruskiewitsch, Heckenberger and
Schneider defined a Weyl groupoid and a generalized root system in
\cite{AHS10}, and further developed it in \cite{HS10a}.

However, the existence of a finite-dimensional Nichols algebra over
a nonabelian group still seems to be a rather rare and difficult phenomenon.
The first examples were discovered by Milinski and Schneider in \cite{MS00} over
Coxeter groups and are of rank $1$ except for one of rank $2$ over $\D_4$. Here
\emph{rank} refers to the number of irreducible summands in the underlying
\ydmP, and equivalently to the rank of the Weyl groupoid and the root system.
Andruskiewitsch, Gra{\~n}a, Heckenberger, Lochmann and Vendramin have in
\cite{AG03}\cite{GHV11}\cite{HLV12} constructed several large finite-dimensional
Nichols algebras of rank 1. On the other hand, strong conditions have been
developed to rule out the existence of finite-dimensional Nichols-algebras over
many groups such as higher alternating groups and most sporadic groups, see e.g.
\cite{AFGV11b}.\\
Moreover, during the work on this article, Schneider, Heckenberger and
Vendramin have in series of papers completed the classification of Nichols
algebras of rank $2$, see \cite{HV13}, by narrowing down the possibilities
using the root system theory and constructing the remaining by hand. So
far, no examples of higher rank over nonabelian groups have been constructed.\footnote{Added 04/2015: Recently Heckenberger and Vendramin have in \cite{HV14} completed the classification of all Nichols algebras of rank $>1$. In rank $>3$ and characteristic $0$ the Nichols algebras constructed in the present article turn out to be all; this is very furtunate, because our construction gives very much control over the Nichols algebra (generators and relations, cohomology etc.). There is a single exceptional example in rank $3$ and in characteristic $3$ there is an additional series $B_n$ which seemingly can be constructed with the approach in the present article from an additional $B_n$ series of diagonal Nichols algebras with braiding $\pm 1$ in characteristic $3$.}\\

In this article, we explicitly construct finite-dimensional
indecomposable Nichols algebras with root systems of type
$A_n,\;C_n,\;D_n,\;E_{6,7,8},\;F_4$ and hence arbitrary rank over
nonabelian groups $G$
that are central stem-extensions of an abelian group $\Gamma$, i.e.:
$$\Sigma^*=\Z_2\rightarrow G\rightarrow \Gamma\qquad \Sigma^*\subset
[G,G]\cap Z(G)$$
As a side remark, we mention that the construction is a special case of our new
notion of a covering Hopf algebra \cite{Len12}, applied to the bosonization
of known finite-dimensional Nichols algebra over the abelian group $\Gamma$. The
covering construction itself does not depend on $\Gamma$ being abelian: For
example, in \cite{Len12} we have constructed a covering Nichols algebra of
dimension $24^2$ over $G=GL_2(\F_3)$, which is a $\Z_2$-stem-extension of
$\Gamma=\S_4$ and an open case called ${\bf C_4}$ in the list \cite{FGV07}.\\

More concretely, we proceed in Section \ref{sec_twistedSymmetry} as follows:
Suppose $M=\bigoplus_{i\in I}M_i$ is a semisimple \ydm over the abelian finite
group $\Gamma$ with simple $1$-dimensional summands $M_i,\;i\in I$, diagonal
braiding matrix $q_{ij}$ and known finite-dimensional Nichols algebra $\B(M)$.
Furthermore, suppose that a finite abelian group $\Sigma$ acts on the
vector space $M$, such that the $\Gamma$-graduation as well as the the
self-braiding operators $c_{M_iM_i}=q_{ii}$ and the monodromy operators
$c_{M_iM_j}c_{M_iM_j}=q_{ij}q_{ji}$ are preserved. We usually assume
the $\Sigma$-action induced from a permutation action on $I$. In this case the
assumed compatibility with the braiding lets $\Sigma$ act on the $q$-diagram
and Dynkin-diagram of $M$ (having nodes $i\in I$) by graph automorphisms.

However, $\Sigma$ does not act on $M$ by \ydmP-automorphisms: The
braiding matrix $q_{ij}$ itself is generally not
preserved, but is supposed to be modified under the action of each $p\in\Sigma$
as prescribed by a bimultiplicative form $\langle\bar{g_i},\bar{g_j}\rangle_p$
with respect to the $\Gamma$-graduation $\bar{g_i},\bar{g_j}\in\Gamma$ of
$M_i,M_j$. \\

The bimultiplicative forms $\langle\rangle_p$ will usually be induced
from a group-2-cocycle $\sigma\in Z^2(\Gamma,\Sigma^*)$ via
$\langle\bar{g},\bar{h}\rangle_p:=\sigma(\bar{g},\bar{h})(p)\sigma^{-1}
(\bar{h},\bar{g})(p)$. We call such an action a twisted symmetry
action of $\Sigma$ on $M$ with respect to the the 2-cocycle $\sigma$.\\

With these notions we construct in Section \nref{sec_Construction} a
covering \ydm $\tilde{M}$ over a stem-extension $\Sigma^*\rightarrow
G\rightarrow \Gamma$ with $\Gamma$ abelian as follows: We start with a
$\Gamma$-\ydm $M$ and a twisted permutation symmetry action of $\Sigma$ on
$M$ (i.e. induced from $\Sigma$ permuting $I$) with respect to a group-2-cocycle
$\sigma$ representing the given stem-extension.
We decompose $M$ into simultaneous eigenspaces $M^{[\lambda]}$ for
eigenvalues $\lambda\in\Sigma^*$ of the
twisted symmetry action of $\Sigma$ and use this $\Sigma^*$-graduation to
refine the $\Gamma$-graduation
on $M$ to a $G$-graduation. Note that formerly
$\Gamma$-homogeneous elements in $M$ are usually
not $G$-homogeneous. By pulling back also the
$\Gamma$-action on $M$ to a $G$-action, we obtain a covering \ydm $\tilde{M}$
over the nonabelian group $G$. As a braided
vector space, $\tilde{M}$ is isomorphic to $M$. The Nichols
algebra  $\B(\tilde{M})$ of the covering \ydm $\tilde{M}$ is called covering
Nichols algebra and is isomorphic to $\B(M)$ as an algebra.\\

When we apply this construction in case $\Gamma$ abelian to a semisimple
Yetter-Drinfel'd module $M=\bigoplus_{i\in I}M_i$ with $\Sigma$ acting by
twisted permutation symmetries, then the different
irreducible $\Gamma$-\ydms $M_i$ laying on an orbit
of the twisted symmetry $\Sigma$ become a single irreducible $G$-\ydm with
increased dimension. This changes the Cartan matrix and hence the Dynkin
diagram (see \cite{HS10a} Definition 6.4) of $\tilde{M}$ compared with $M$ as
described in Theorem \nref{thm_Fold}:
Nodes of the Dynkin diagram of $M$ 
in a $\Sigma$-orbit give rise to a single nodes of the Dynkin diagram
of $\tilde{M}$. The root system is reduced to the subsystem fixed
by $\Sigma$ acting on the Dynkin diagram of $M$ by graph automorphisms. This
behaviour is classically known as diagram folding of a Lie algebra by an outer
automorphism (for a purely root system approach to folding see e.g.
\cite{Ginz06} p. 47). 

\begin{exampleX}(Section \nref{sec_RamifiedEF})
  There exists a $6$-dimensional \ydm $M$ over $\Gamma=\Z_2^4$ with
  $\Gamma$-homogeneous components of dimension $1,1,2,2$ which is the sum of
  $6$ simple 1-dimensional \ydms $M_i$. The Cartan
  matrix of $M$ is as the semisimple Lie algebra $E_6$ and the Nichols algebra
  $\B(M)$ has dimension  $2^{36}$. Moreover, $M$ admits an  action of
  $\Sigma=\Z_2$  by twisted  symmetries, corresponding to a diagram 
  automorphism of the $E_6$ root system.\\

  The covering  \ydm $\tilde{M}$ over the 
  stem-extension  $G=\Z_2^2\times \D_4$ is the sum of $4$ simple \ydms
  $\tilde{M}_i$ of dimensions $1,1,2,2$. The covering Nichols algebra
  $\B(\tilde{M})$ is indecomposable, has also dimension $2^{36}$ and its
  Dynkin diagram is $F_4$. This corresponds to the inclusion of the semisimple
  Lie algebras $F_4\subset E_6$ as fixed points of the outer Lie algebra
  automorphism of $E_6$.
\end{exampleX}

This example is visualized as follows:
\image{0.26}{Title}

For twisted permutation symmetries of prime order $\Sigma=\Z_p$ we use the
suggestive terms inert/split for $G$-nodes $\tilde{M}_i$ of dimension $1$/$p$.
They correspond to
$\Sigma$-orbits of length $1$/$p$ of $\Gamma$-nodes $M_i$ and by Lemma
\nref{lm_SplitInert} to central/noncentral $G$-graduation (conjugacy class!).
We
also use the terms inert / ramified / split for $G$-edges between
$G$-nodes $\tilde{M}_i,\tilde{M}_j$ that are inert,inert / inert,split /
split,split. They correspond to edges between between $\Gamma$-nodes
in different $\Sigma$-orbits of length $1,1$ / $1,p$ / $p,p$ and by Theorem
\nref{thm_Fold} to $G$-edges of type $A_2$ / $B_2$ / $A_2$. All
these cases are shown in the preceding example for $p=2$.\\

In order to summarize combinatorial considerations, we introduce in Section
\ref{sec_SymplecticRootsystems} the notion of a symplectic root system. A
symplectic root system for a given Cartan matrix
resp. Dynkin diagram is a decoration of the diagram nodes by values in a finite
symplectic vector space, such that the Weyl group acts as symplectic isometries.
Note that such structures and especially the group of isometries have already
been studied under the name ``vanishing lattices'' in \cite{Chm82}, \cite{Jan83}
in the context of singularity theory. The existence of a symplectic root system
encodes nontrivial necessary
conditions on the existence of a covering Nichols algebra: By \cite{HS10a} Prop.
8.1., nodes in the diagram have to be connected if the $G$-decorations
are noncommuting.
Meanwhile we have in \cite{Len13b} completely classified symplectic root
systems for arbitrary given graphs over the field $\F_2$, which is the case
relevant to this article. This technical result determines for a given Dynkin
diagram the possible size of rank and center of a nonabelian group $G$ which
realizes the diagram as a finite-dimensional covering Nichols algebra. For
example, to realize the diagram $D_n$, the group $G$ needs to have a larger
center than for other diagrams, while groups $G$ with even larger center can
only support disconnected diagrams.\\

Note that in \cite{Len12} Theorem 6.1 we also
checked for possible covering Nichols algebras for other primes $\Sigma=\Z_p$
and found that the ones given in the present paper of Cartan type and
$\Sigma=\Z_2$ are indeed the only admissible choices. However, there are
additional diagrams not corresponding to semisimple Lie algebras that lead to
new {\bf de}-composable covering Nichols algebras with $\Sigma=\Z_2$ and even
one $D_4\rightarrow G_2$ for $\Sigma=\Z_3$.\\

The application of the Construction Theorem \nref{thm_Construction} proceeds
case-by-case, depending on the assumed symmetric Dynkin
diagram of $\B(M)$ and a respective symplectic root
system, whose existence imposes restrictions on rank and center of $G$. For the
constructions we asumme $G$ fulfills an additional technical condition
($2$-saturated) to simplify statements involving even-order generating sets. For
groups where this is not the case, we may construct Nichols algebras that are
disconnected and/or contain a connected component solely over an abelian group.
This is discussed in Section \nref{sec_Disconnected}, especially Lemma
\nref{lm_Disconnected} and Corollary \nref{cor_Montgomery}. For each connected
Dynkin diagram we compactly describe dimension, root system and Hilbert series
of the newly constructed covering Nichols algebra $\B(\tilde{M})$.
\begin{itemize}
 \item In Section \nref{sec_Unramified} we treat the generic, unramified case:
    Given a simply-laced Dynkin diagram $X_n$ of type ADE, we use the symplectic
    root system to define a $\Gamma$-\ydm $M=N\oplus N_{\sigma}$ with
    finite-dimensional Nichols algebra $\B(M)$, where $N\not\cong N_\sigma$   
    each have the given Dynkin diagram  $X_n$ and the braiding matrices only
    contain entries $\pm 1$. The crucial aspect of the symplectic root system is
    that it ensures the overall Dynkin diagram of $M$ to be a disconnected
    union $X_n\times X_n$, which corresponds to
    $c_{NN_{\sigma}}c_{N_{\sigma}N}=id$, and hence again a finite-dimensional
    Nichols algebra. 

    Then,
    interchanging $N,N_{\sigma}$ gives by construction an action of
    $\Sigma=\Z_2$ by twisted symmetries on $M$.
    Note that the well-known example of an indecomposable Nichols algebra over
    $G=\D_4$ (see \cite{MS00} resp. example Section \nref{sec_D4} in
    this article) is our model for this case and corresponds to the diagram
    $X_n=A_2$. We also give an example
    $X_n=A_4$ in Section \nref{sec_exampleA4}.
 \item In Section \nref{sec_RamifiedEF} we construct the exceptional
    example visualized above, where the twisted symmetry acts on a single
    $\Gamma$-\ydm $M$ of type $E_6$. The covering Nichols algebra
    $\B(\tilde{M})$ over $G$ has Dynkin diagram
    $F_4$. Thereby ramified edges appear, which connect simple $G$-\ydms of
    different dimension. We use a symplectic root system to construct the split
    part of $M$ with diagram $A_2\times A_2$ and use an ad-hoc continuation by
    two inert nodes to $E_6$.
 \item In Section \nref{sec_RamifiedAB} we construct an infinite
    family of ramified covering Nichols algebras of type $A_{2n-1}\rightarrow
    C_n$ similar to the previous case $E_6\mapsto F_4$. Again we use a
    symplectic root system for the split part of the diagram $A_{n-1}\times
    A_{n-1}$ and an explicit continuation by one inert node to $A_{2n-1}$.
 \item In Section \nref{sec_Disconnected} we describe how to construct Nichols
    algebras $\B(\tilde{M})$ with disconnected Dynkin diagrams and prove in
    particular, that any group $G$ with $[G,G]\cong\Z_2$ (regardless of
    the order) admits at least one finite-dimensional
    indecomposable Nichols algebra.
\end{itemize}

We summarize the properties of the constructed Nichols algebras with connected
Dynkin diagram in the following table, where the first and second column give
necessary and sufficient conditions on the group $G$. For presenting the main
result, we also need an additional technical assumption on $G$ ($2$-saturated,
Definition \nref{def_Saturated}) that gives us control over the size of
even-order generating systems in a group.
We use the conventions
$(n\;\mbox{mod}\;2)\in\{0,1\}$
and denote by $\Phi^+(X_n)$ a fixed set of positive roots in a root system $X_n$
(the Nichols algebra dimensions are $2$-powers, because all self-braidings
are $q_{\alpha\alpha}=-1$).\\

\begin{center}
   \begin{narrow}{-0.8cm}{0cm}
      \begin{tabular}{ll|lll}
	$dim_{\F_2}(G/G^2)\quad$
	  & $dim_{\F_2}(Z(G)/G^2)\quad$ & 
	    Dynkin-D. of $\tilde{M}$ $\quad$
	  & $\dim\left(\B({M})\right)$ & $=\dim\left(\B(\tilde{M})\right)$\\
	\hline
	 $n$ & $n \mod 2$ & $A_{n\geq 2}$ &
	  $2^{|\Phi^+(A_n\times A_n)|}$ & $=2^{n(n+1)}$\\
	 $n=6,7,8$ &  $n \mod 2$ & $E_{6,7,8}$ &
	  $2^{|\Phi^+(E_n\times E_n)|}$ & $=2^{72},2^{126}, 2^{240}$\\
	 $n$ & $2-(n \mod 2)$ & $D_{n\geq 4}$ &
	  $2^{|\Phi^+(D_n\times D_n)|}$ & $=2^{2n(n-1)}$\\
	 $n=4$ & $2$ & $F_4\quad$ &
	    $2^{|\Phi^+(E_6)|}$ & $=2^{36}$\\
         $n$ & $2-(n \mod 2)$ & $C_{n\geq 3}\quad$ &
	    $2^{|\Phi^+(A_{2n-1})|}$ & $=2^{n(2n-1)}$\\
      \end{tabular}
  \end{narrow}
\end{center}~\\

Especially, in Corollary \nref{cor_Montgomery} we find indecomposable Nichols
algebras (possibly with disconnected Dynkin diagram) over all groups $G$, that
are $\Z_2$-stem-extensions of an abelian group $\Gamma$ and thus positively
answer for such groups a respective question
raised by Susan Montgomery in \cite{Mont95} for pointed Hopf algebras by
providing the bosonizations $H=\k[G]\#\B(\tilde{M})$. See \cite{AS02} Question
3.17 for the Nichols algebra formulation.\\

By construction, a Nichols algebra $\B(\tilde{M})$ obtained this way is
non-faithful, diagonal and as an algebra isomorphic to the corresponding Nichols
algebra $\B(M)$ over the abelian $\Gamma$. However, the knowledge
of $H^2(G,\k^\times)$ together with Matsumoto's spectral sequence often allows
to obtain Doi twists that are truly new faithful, non-diagonal,
finite-dimensional, indecomposable Nichols Algebras. We give explicit examples
of type $A_2,\;A_3,\;C_3,\;D_4,\;C_4,\;F_4$ over nonabelian groups of
order $16$ and $32$ in Section \nref{sec_NondiagonalExamples}.

\section{Preliminaries}

Throughout this article we suppose $\k=\C$, all groups are finite and all
vector spaces finite-dimensional. The field with $p$ elements is denoted by
$\F_p$. The dihedral, quaternion, symmetric and alternating groups are denoted
by $\D_4,\Q_8,\S_n,\A_n$. The Dynkin diagrams of the semisimple Lie algebras of
rank $n$ are denote by $A_n,B_n,C_n,D_n,E_n,F_4,G_2$ . The multiplicative group
of the field $\k$ is denoted by $\k^\times$,
while the dual group is denoted $\Gamma^*=\Hom(\Gamma,\k^\times)$. We frequently
call the generator of the multiplicatively denoted group
$\Z_2=\langle\theta\rangle$.\\

The following notions are standard. We summarize them to fix notation and
refer to \cite{HLecture08} for a detailed account.

\begin{definition}
  A \emph{\ydmP}\index{Yetter-Drinfel'd module $M$} $M$ over a group $\Gamma$
  is a $\Gamma$-graded vector space,
  $M=\bigoplus_{g\in \Gamma} M_g$
  with a $\Gamma$-action on $M$ such that
  $g.M_h=M_{ghg^{-1}}$
  Note that over $\k=\C$ any \ydm $M$ is semisimple, i.e. the direct sum of $n$
  simple \ydmsP, and we call $n$ \emph{rank}. Call $M$ 
  \begin{itemize}
    \item \emph{(link-) indecomposable}\index{Link
    indecomposable (YDM)}, iff the support
    $\{g\;|\;M_g\neq 0\}$ generates all $\Gamma$.
    \item \emph{minimally indecomposable}\index{Minimally
    (link-) indecomposable (YDM)}, iff $M$ is indecomposable and no proper
    sub-\ydm is indecomposable. Every indecomposable \ydm contains a
    minimally indecomposable one.
    \item \emph{faithful}\index{Faithful
  (YDM)}, iff the $\Gamma$-action on $M$ is faithful.
  \end{itemize}
\end{definition}

\begin{lemma}
  The map $c_{MM}:\;M\otimes M\rightarrow M\otimes M$ defined by
  $$M_g\otimes M_h\ni\; v\otimes w\stackrel{c_{MM}}{\longmapsto}
    g.w\otimes v\;\in M_{ghg^{-1}}\otimes M_{g}$$
  fulfills the \emph{Yang-Baxter-equation}
  $$(id\otimes c_{MM})(c_{MM}\otimes id)(id\otimes c_{MM})
    =(c_{MM}\otimes id)(id\otimes c_{MM})(c_{MM}\otimes id)$$
  turning $M$ into a \emph{braided vector space} with \emph{braiding} $c_{MM}$.
\end{lemma}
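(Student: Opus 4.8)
The plan is a direct verification on homogeneous elements. Since $M=\bigoplus_{g\in\Gamma}M_g$ is a direct sum, the formula $v\otimes w\mapsto g.w\otimes v$ for $v\in M_g,\ w\in M_h$ determines $c_{MM}$ unambiguously on the homogeneous tensors spanning $M\otimes M$ and extends by linearity, so there is nothing to verify for well-definedness. First I would record that $c_{MM}$ really lands in the asserted target: the Yetter-Drinfel'd axiom $g.M_h=M_{ghg^{-1}}$ gives $g.w\in M_{ghg^{-1}}$, hence $c_{MM}(M_g\otimes M_h)\subseteq M_{ghg^{-1}}\otimes M_g$. This degree computation is exactly the ingredient that makes the braid relation work, since after each application of $c_{MM}$ one needs the $\Gamma$-degree of the new left-hand factor in order to apply the next copy of $c_{MM}$.

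For the Yang-Baxter equation itself I would fix a homogeneous elementary tensor $u\otimes v\otimes w$ with $u\in M_g$, $v\in M_h$, $w\in M_k$, and evaluate both composites stepwise. Tracing $(id\otimes c_{MM})(c_{MM}\otimes id)(id\otimes c_{MM})$ yields
$$u\otimes v\otimes w\ \longmapsto\ u\otimes h.w\otimes v\ \longmapsto\ (gh).w\otimes u\otimes v\ \longmapsto\ (gh).w\otimes g.v\otimes u,$$
where the middle arrow uses that $h.w$ has $\Gamma$-degree $hkh^{-1}$ (so braiding it past $u\in M_g$ acts by $g$) together with the action identity $g.(h.w)=(gh).w$. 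Tracing the other composite $(c_{MM}\otimes id)(id\otimes c_{MM})(c_{MM}\otimes id)$ yields
$$u\otimes v\otimes w\ \longmapsto\ g.v\otimes u\otimes w\ \longmapsto\ g.v\otimes g.w\otimes u\ \longmapsto\ (gh).w\otimes g.v\otimes u,$$
where the final arrow uses that $g.v$ has $\Gamma$-degree $ghg^{-1}$ by the axiom recorded above, so braiding it past $g.w$ acts by $ghg^{-1}$, and $(ghg^{-1}).(g.w)=(gh).w$. The two outputs coincide, and by linearity the Yang-Baxter equation then holds on all of $M\otimes M\otimes M$.

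The only thing requiring care is the $\Gamma$-degree bookkeeping in these two chains: at each stage one must invoke $p.M_q=M_{pqp^{-1}}$ to know the degree of the factor about to be braided, and one must use that the $\Gamma$-action is a genuine left action so that exponents compose, $p.(q.x)=(pq).x$. I do not anticipate a real obstacle; the content of the lemma is precisely that this group-theoretic bookkeeping is consistent, both orders of braiding collapsing to $(gh).w\otimes g.v\otimes u$.
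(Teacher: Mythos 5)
Your verification is correct: both composites collapse to $(gh).w\otimes g.v\otimes u$, and the degree bookkeeping via $g.M_h=M_{ghg^{-1}}$ together with the left-action identity $g.(h.w)=(gh).w$ is exactly what makes the two chains agree. The paper states this lemma without proof as a standard fact (deferring to the cited lecture notes), and your direct computation on homogeneous tensors is precisely the standard argument.
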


\begin{example}
For \emph{abelian} groups $\Gamma$, the compatibility condition implies the
stability
of the homogeneous components $M_g$. For $\k=\C$ all {simple} \ydms $M_i$
are 1-dimensional and isomorphic to some $\O_{g_i}^{\chi_i}:=x_i\k$ with
$\Gamma$-graduation $g_i$ and $\Gamma$-action defined by a 1-dimensional
character $\chi_i:\;\Gamma\rightarrow \k^\times$ via $g.x_i:=\chi_i(g)x_i$. The
braiding $c_{MM}$ is hence \emph{diagonal} with \emph{braiding matrix}
$q_{ij}:=\chi_j(g_i)$.
$$x_i\otimes x_j\stackrel{c_{MM}}{\longmapsto} q_{ij}(x_j\otimes
x_i)$$
\end{example}

\begin{definition}
  Let $M$ be a \ydm over an arbitrary group $\Gamma$ and let $e_k\in M_{g_k}$
  be a fixed homogeneous basis; for $\Gamma$ abelian one may choose $e_k:=x_k$.
  Consider the tensor algebra $T(M)$, which
  can be identified with the algebra of words in the letters
  $\{e_k\}_{k=1\ldots \dim(M)}$ and is
  again a $\Gamma$-\ydmP. We can uniquely obtain
  \emph{skew derivations} $\partial_i:\;T(M)\rightarrow T(M)$ by
  $$\partial_k(1)=0
  \qquad\partial_k(e_l)=\delta_{kl}1
  \qquad\partial_k(x\cdot y)=\partial_k(x)\cdot (g_k.y)+x\cdot \partial_k(y)$$
  The \emph{Nichols algebra}\index{Nichols algebra $\B(M)$} $\B(M)$ is the
  quotient of $T(M)$ by the largest
  homogeneous ideal $\mathfrak{I}$ in degree $\geq 2$, invariant under
  all $\partial_k$. It is a $\Gamma$-\ydmP.  
\end{definition}

Following \cite{HLecture08} we draw a {$q$-diagram} for \ydm $M$ over an
abelian group by
drawing a node for each basis element $x_i$ spanning a corresponding
1-dimensional simple summand $M_i=\O_{g_i}^{\chi_i}=x_i\k$ of $M$. We draw an
edge between $x_i,x_j$ whenever $q_{ij}q_{ji}\neq 1$ (i.e. $c_{M_iM_j}^2\neq
id$) and decorate each node $i$ by the complex numbers $q_{ii}$ and each
edge $ij$ by $q_{ij}q_{ji}$. It turns out that this data is all that is needed
to determine the Nichols algebra $\B(M)=\B(\bigoplus_{i\in I}M_i)$.

\begin{definition}
  The \emph{adjoint action} $\B(M)\otimes\B(M)\to\B(M)$ is given by
  $$x\otimes y\longmapsto\ad{x}{y}:=x^{(1)}yS(x^{(2)})$$
  For \ydms $N,L\subset \B(M)$ we define the \emph{ad-space} by 
  $$\ad{N}{L}:=\{\ad{x}{y}\;|\;x\in N,\;y\in L\}$$
\end{definition}
The root system theory in \cite{AHS10} describes $\B(M)$ in terms
of Nichols algebras over iterated ad-spaces between the simple summands $M_i$
(simple roots). Especially if $M$ is over an abelian group, all $M_i=x_i\k$ are
$1$-dimensional and we obtain a (slightly different) PBW-basis of iterated
braided commutators, as already observed by Kharchenko \cite{Kha08}. Compare the
lecture notes \cite{HLecture08}.
\begin{definition}\label{def_Cartan}
  For a finite-dimensional Nichols algebra $\B(M)$ with semisimple \ydm
  $M=\bigoplus_{i\in I} M_i$ consider the following structure constants for
  $i\neq j\in I$:
  $$C_{i,j}:=\max_m\left(\adn{m}{M_i}{M_j}:=\ad{M_i}{\ad{M_i}{
  \cdots M_j}}\neq \{0\}\right)$$
  Together with $C_{i,i}:=2$ they define a \emph{Cartan matrix}
  \footnote{Be warned, that this corresponds to choosing for Lie algebras
  $C_{i,j}=\frac{2(\alpha_j,\alpha_i)}{(\alpha_i,\alpha_i)}$ according to Kac,
  Jantzen, Carter etc. and is transpose to the convention e.g. in
  Humphreys book.}
  $(C_{i,j})_{i,j\in I}$ of $M$.
  One may draw a \emph{Dynkin diagram} with node set $I$ and edges decorated
  by $(C_{i,j},C_{j,i})$.\\
  For some cases pictorial representations
  are custom, e.g. double line and arrow for $(-1,-2)$ as in the root system
  $B_2$.
\end{definition}
\begin{theorem}\label{thm_Cartan}
 For a finite-dimensional Nichols algebra $\B(M)$ over an abelian group with
  diagonal braiding matrix $q_{ij}$ the Cartan matrix can be equivalently
  obtained by 
  $C_{i,j}:=\min_m\left(q_{ii}^{-m}=q_{ij}q_{ji}\;or\;q_{ii}^{m+1}=1 \right)$,
  see e.g. \cite{HLecture08} Prop. 5.5.
\end{theorem}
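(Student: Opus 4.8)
The asserted formula concerns the off-diagonal entries ($i \neq j$; the value $C_{i,i} = 2$ is fixed by convention in Definition~\ref{def_Cartan}). Fix $i \neq j \in I$ and abbreviate $q := q_{ii}$, $p := q_{ij}q_{ji}$, and $z_m := \adn{m}{x_i}{x_j} \in \B(M)$. In $T(M)$ the element $z_m$ is a linear combination of the words in the two letters $x_i, x_j$ that use $x_i$ exactly $m$ times and $x_j$ once; it is $\Gamma$-homogeneous of degree $g_i^{m} g_j$, and by Definition~\ref{def_Cartan} one has $C_{i,j} = \max\{m \ge 0 : z_m \neq 0\}$. The set $\{m : z_m \neq 0\}$ is an initial segment of $\{0,1,2,\dots\}$, since $z_m = \ad{x_i}{z_{m-1}}$ forces $z_m = 0$ once $z_{m-1} = 0$. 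Because $x_i$ is a primitive element of the braided Hopf algebra $\B(M)$ with $S(x_i) = -x_i$, one has $\ad{x_i}{y} = x_i y - \chi_h(g_i)\, y x_i$ for homogeneous $y$ of $\Gamma$-degree $h$, where $\chi_h$ is the character by which $\Gamma$ acts on the $h$-component of $T(M)$; in particular $z_m = x_i z_{m-1} - q^{m-1} q_{ij}\, z_{m-1} x_i$. I would use this recursion as the sole computational input.

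The key tool is the defining property of the Nichols algebra: by the construction of $\mathfrak{I}$ as the largest homogeneous ideal in degrees $\ge 2$ that is stable under all the skew derivations $\partial_k$, a nonzero homogeneous element of $\B(M)$ of degree $\ge 1$ cannot be annihilated by all the $\partial_k$ at once. For degree reasons $\partial_k(z_m) = 0$ whenever $k \notin \{i,j\}$, so the vanishing of $z_m$ is governed by $\partial_i(z_m)$ and $\partial_j(z_m)$. From the recursion above together with the twisted Leibniz rule $\partial_k(uv) = \partial_k(u)\,(g_k . v) + u\, \partial_k(v)$, I would prove by induction on $m$ that
$$\partial_i(z_m) = 0 \quad (m \ge 1), \qquad \partial_j(z_m) = \Big(\prod_{k=0}^{m-1}(1 - q^{k} p)\Big)\, x_i^{m}$$
(here $\partial_j(z_m) \in \k x_i^{m}$ for degree reasons, and the displayed product is this scalar up to a unit in $\k$ that plays no role below). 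The base case $m = 1$ is $z_1 = x_i x_j - q_{ij} x_j x_i$, $\partial_i(z_1) = 0$, $\partial_j(z_1) = (1-p)x_i$; in the inductive step one runs $z_m = x_i z_{m-1} - q^{m-1} q_{ij}\, z_{m-1} x_i$ through $\partial_i$ and $\partial_j$, substitutes $\partial_i(z_{m-1}) = 0$ and the known value of $\partial_j(z_{m-1})$, and simplifies — the identity $\partial_i(z_m) = 0$ then falling out as a cancellation of two terms. I expect this induction to be the main obstacle: it is routine but sensitive to signs and to the exact scalars $\chi(g_i), \chi(g_j)$ carried by the intermediate monomials, and organising it around the single recursion (rather than an expanded $q$-binomial formula for $z_m$) is what keeps the bookkeeping under control.

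Finally, since $\B(M)$ is finite-dimensional, so is the subalgebra generated by $x_i$, which is $\B(x_i\k)$; hence $q = q_{ii}$ is a root of unity $\neq 1$ of some order $N_i \ge 2$ (otherwise $\B(x_i\k) \cong \k[x_i]$), and $\B(x_i\k) \cong \k[x_i]/(x_i^{N_i})$. Thus $x_i^{m} \neq 0$ exactly when $m \le N_i - 1$, equivalently when $q^{m'} \neq 1$ for all $1 \le m' \le m$. Combining this with the two derivation formulas and the Nichols property, $z_m \neq 0$ if and only if $q^{k} p \neq 1$ for all $0 \le k \le m-1$ and $q^{m'} \neq 1$ for all $1 \le m' \le m$. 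This condition on $m$ is downward closed, and the largest $m$ for which it holds is precisely the least $m \ge 0$ with either $q^{m} p = 1$ or $q^{m+1} = 1$. Rewriting $q^{m} p = 1$ as $q_{ii}^{-m} = q_{ij}q_{ji}$ gives $C_{i,j} = \max\{m : z_m \neq 0\} = \min_m\!\left(q_{ii}^{-m} = q_{ij}q_{ji}\ \text{ or }\ q_{ii}^{m+1} = 1\right)$, as claimed.
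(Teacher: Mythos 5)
Your proof is correct, and it is essentially the standard argument that the paper delegates to the citation of \cite{HLecture08}: the recursion $z_m=x_iz_{m-1}-q_{ii}^{m-1}q_{ij}\,z_{m-1}x_i$, the induction giving $\partial_i(z_m)=0$ and $\partial_j(z_m)=\prod_{k=0}^{m-1}(1-q_{ii}^{k}q_{ij}q_{ji})\,x_i^{m}$ (which in fact holds exactly, with no extra unit), the fact that a nonzero homogeneous element of positive degree in $\B(M)$ cannot be killed by all skew derivations, and the identification of the subalgebra generated by $x_i$ with the quantum line all check out, and the final bookkeeping correctly converts the downward-closed nonvanishing condition into the stated $\min_m$. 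The only imprecision is the parenthetical claim that $g_i$ acts on a $\Gamma$-homogeneous $y$ of degree $h$ by a scalar $\chi_h(g_i)$ depending only on $h$ --- in general the scalar depends on the multidegree of the monomials, not just their $\Gamma$-degree --- but since every monomial occurring in $z_{m-1}$ has the same multidegree, the recursion you actually use is unaffected.
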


If especially the Cartan matrix $C_{i,j}$ is the
Cartan matrix of a semisimple Lie algebra $\mathfrak{g}$, then the Nichols
algebra has a PBW-basis consisting of monomials in iterated braided commutators
according to the positive roots in the classical root system of
$\mathfrak{g}$. However, several additional exotic
examples of finite-dimensional Nichols algebras exist, that possess unfamiliar
Dynkin diagrams, such as a multiply-laced triangle, and where Weyl reflections
may connect different $\Gamma$-\ydms (yielding a {Weyl groupoid}). Heckenberger
completely classified all Nichols algebras over abelian $\Gamma$ in
\cite{Heck09}.

\section{Covering Nichols Algebras}

From now on, we always suppose a central extension of an abelian
group $\Gamma$:
$$1\rightarrow \Sigma^* \rightarrow G\stackrel{\pi}{\longrightarrow}
  \Gamma\rightarrow 1 \qquad\Sigma\subset Z(\Gamma)$$
where $\Sigma^*=\Hom(\Sigma,\k^\times)$. We denote elements in $G$ by letters
such as $g$, whereas elements in the abelian quotient $\Gamma$ are denoted by
$\bar{g}$. A $\Gamma$-\ydm is denoted by $M$, whereas the covering $G$-\ydm
will be denoted by $\tilde{M}$.

\subsection{Twisted Symmetries}\label{sec_twistedSymmetry}

\begin{definition}
  Let $M$ be a \ydm over an abelian group $\Gamma$
  $$M=\bigoplus_{i\in I} M_i=\bigoplus_{i\in I}
  \O_{\bar{g}_i}^{\chi_i}=\bigoplus_{i\in I} x_i\k
    \qquad q_{ij}=\chi_j(\bar{g}_i)$$ 
  written as a sum of simple 1-dimensional \ydms $M_i,\;i\in I$.\newline 
  We call a linear bijection 
  $f_0:\;M\rightarrow  M$ a \emph{twisted symmetry},  iff 
  \begin{itemize}
    \item $f_0$ preserves the $\Gamma$-grading (resp. is colinear)
    \item $f_0$ preserves self-braiding and monodromy:
      $$(f_0\otimes f_0)c_{M_iM_i}=c_{M_iM_i}(f_0\otimes f_0)$$
      $$(f_0\otimes f_0)c_{M_iM_j}c_{M_jM_i}=
	c_{M_iM_j}c_{M_jM_i}(f_0\otimes f_0)$$
  \end{itemize}
\end{definition}
The braiding itself needs not to be preserved. However, we wish to
control the modification by the following notions, regardless of $\Gamma$ being
abelian:
\begin{definition} \label{def_TwistedSymmetryWithRespect}
Let $M=\bigoplus_{i\in I} M_i$ be a semisimple,
indecomposable $\Gamma$-\ydm decomposed into 1-dimensional summands $M_i$.
\begin{itemize}
 \item We call $f_0$ a twisted symmetry with respect to a
  given bimultiplicative form
  $\langle,\rangle:\;\Gamma\times\Gamma\rightarrow\k^\times$,
  iff $f_0$ is a twisted symmetry and 
  $$c_{M_iM_j}(f_0\otimes f_0)=
    \langle \bar{g}_i,\bar{g}_j\rangle(f_0\otimes f_0)c_{M_iM_j}$$
  Note that since $f_0$ preserves the monodromy $c_{M_iM_j}c_{M_jM_i}$, the
  form is always skew-symmetric $\langle \bar{g},\bar{h}\rangle=\langle
  \bar{h},\bar{g}\rangle^{-1}$ and since  $f_0$ preserves the self-braiding
  $c_{M_iM_i}$, the form is isotropic  $\langle \bar{g},\bar{g}\rangle=1$.
  It is hence \emph{symplectic}.
 \item We call $f_0$ a twisted symmetry with respect
  to a group-2-cocycle $\sigma_0\in Z^2(\Gamma,\k^\times)$, iff it is a twisted
  symmetry with respect to the form
  $$\langle \bar{g},\bar{h}\rangle 
  :=\sigma_0(\bar{g},\bar{h})\sigma_0^{-1}(\bar{h},\bar{g})$$
\end{itemize}
\end{definition}

\begin{corollary}\label{cor_twistedAction}
  Any twisted symmetry $f_0:M\rightarrow M$ of an indecomposable \ydm $M$ with
  respect to a given form  $\langle,\rangle:\;\Gamma\times\Gamma\rightarrow \k$
  is an isomorphism of  \ydms $M\rightarrow M_{\langle,\rangle}$, where
  $M_{\langle,\rangle}$ is $M$  as $\Gamma$-graduated vector space and the
  $\Gamma$-action on a homogeneous element $v\in M_{\bar{h}}$ is modified to
  $$\bar{g}._{\langle,\rangle}v:=\langle \bar{g},\bar{h}\rangle(\bar{g}.v)$$
\end{corollary}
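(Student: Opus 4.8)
The plan is to verify directly that the modified action $\bar g._{\langle,\rangle}v := \langle \bar g,\bar h\rangle(\bar g.v)$ for $v\in M_{\bar h}$ defines a \ydmP-structure $M_{\langle,\rangle}$, and then that $f_0$ intertwines the original structure on $M$ with this new one. The bimultiplicativity of $\langle,\rangle$ in the left slot is exactly what is needed to check that $\bar g._{\langle,\rangle}(-)$ is a group action: for $v\in M_{\bar h}$ one computes $\bar g._{\langle,\rangle}(\bar k._{\langle,\rangle}v) = \bar g._{\langle,\rangle}(\langle\bar k,\bar h\rangle \bar k.v)$; since $\bar k.v\in M_{\bar h}$ (as $\Gamma$ is abelian the grading is preserved by the action), this equals $\langle\bar k,\bar h\rangle\langle\bar g,\bar h\rangle (\bar g\bar k).v = \langle\bar g\bar k,\bar h\rangle(\bar g\bar k).v = (\bar g\bar k)._{\langle,\rangle}v$, using bimultiplicativity in the first argument. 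Compatibility of this action with the (unchanged) $\Gamma$-grading is immediate since $\bar g._{\langle,\rangle}v$ is a scalar multiple of $\bar g.v\in M_{\bar g\bar h\bar g^{-1}}=M_{\bar h}$, so $M_{\langle,\rangle}$ is a genuine $\Gamma$-\ydmP\ on the same graded vector space.

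Next I would show $f_0\colon M\to M_{\langle,\rangle}$ is a morphism of \ydmsP. Colinearity is part of the hypothesis that $f_0$ is a twisted symmetry, so only equivariance needs checking. The braided structure on a \ydm over an abelian group is recovered from $c_{MM}$ via the formula in the preliminary Lemma: on $M_{\bar g}\otimes M_{\bar h}$ we have $c_{MM}(v\otimes w) = \bar g.w\otimes v$. Hence the defining relation $c_{M_iM_j}(f_0\otimes f_0) = \langle\bar g_i,\bar g_j\rangle(f_0\otimes f_0)c_{M_iM_j}$ unpacks, on $v\otimes w$ with $v\in M_{\bar g_i}$, $w\in M_{\bar g_j}$, to $\bar g_i.f_0(w)\otimes f_0(v) = \langle\bar g_i,\bar g_j\rangle\, f_0(\bar g_i.w)\otimes f_0(v)$. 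Comparing the first tensor factors (the second factor $f_0(v)$ is common) gives $\bar g_i.f_0(w) = \langle\bar g_i,\bar g_j\rangle\, f_0(\bar g_i.w) = f_0(\bar g_i._{\langle,\rangle}w)$ for all $w\in M_{\bar g_j}$. Since $M$ is indecomposable and the $\bar g_i$ range over the support — which generates $\Gamma$ — and since both sides are multiplicative in the acting element (the left side obviously, the right side by the computation of the previous paragraph), the identity $\bar g.f_0(w) = f_0(\bar g._{\langle,\rangle}w)$ extends from generators $\bar g = \bar g_i$ to all $\bar g\in\Gamma$, which is precisely $\Gamma$-equivariance of $f_0\colon M\to M_{\langle,\rangle}$.

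The main subtlety — really the only place indecomposability is used — is the last extension step: a priori the twisted-symmetry axioms only pin down the interaction of $f_0$ with the action of grading elements $\bar g_i$ that actually occur, so one must argue that equivariance for these generators forces it for all of $\Gamma$. Here one uses that the support $\{\bar g_i\}$ generates $\Gamma$ (indecomposability) together with the already-established fact that $\bar g\mapsto(w\mapsto f_0^{-1}(\bar g.f_0(w)))$ and $\bar g\mapsto(w\mapsto \bar g._{\langle,\rangle}w)$ are both group actions of $\Gamma$ on $M$ agreeing on a generating set, hence equal. A small bookkeeping point is that if some simple summands share a grading $\bar g_i=\bar g_j$ the monodromy/self-braiding axioms rather than the with-respect-to-form axiom supply the needed relation, but this does not affect the argument. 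Finally $f_0$ is bijective by hypothesis, so it is an isomorphism $M\xrightarrow{\ \sim\ }M_{\langle,\rangle}$, completing the proof.
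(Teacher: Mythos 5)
The paper states this corollary without any proof, treating it as an immediate unwinding of Definition \nref{def_TwistedSymmetryWithRespect}, so there is no argument of the author's to compare against; your verification is correct and supplies exactly the details the paper suppresses. In particular you rightly isolate the one genuinely nontrivial point: the twisted-symmetry axiom only pins down the intertwining relation for acting elements $\bar g_i$ lying in the support of $M$, and it is indecomposability (the support generates $\Gamma$) together with the observation that both $\bar g\mapsto f_0^{-1}(\bar g.f_0(-))$ and $\bar g\mapsto\bar g._{\langle,\rangle}(-)$ are group actions that extends the identity to all of $\Gamma$ — this is precisely why the hypothesis of indecomposability appears in the statement. One small directional remark: unwinding $c_{M_iM_j}(f_0\otimes f_0)=\langle\bar g_i,\bar g_j\rangle(f_0\otimes f_0)c_{M_iM_j}$ on $v\otimes w$ gives $\bar g_i.f_0(w)=\langle\bar g_i,\bar g_j\rangle f_0(\bar g_i.w)=f_0(\bar g_i._{\langle,\rangle}w)$, which literally says that $f_0$ is equivariant as a map $M_{\langle,\rangle}\to M$, i.e.\ as a map $M\to M_{\langle,\rangle^{-1}}$; this agrees with the asserted direction $M\to M_{\langle,\rangle}$ exactly when the form takes values in $\pm1$, which is the only case the paper ever uses ($\Sigma^*\cong\Z_2$), so the discrepancy is harmless here — but for a general $\k^\times$-valued form the clean statement is that $f_0$ is an isomorphism onto $M_{\langle,\rangle^{-1}}$ (equivalently, replace $f_0$ by $f_0^{-1}$). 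Your side remark about summands sharing a grading is unnecessary, since the with-respect-to-form axiom is imposed for all pairs $i,j$, but it does no harm.
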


\begin{remark}\label{rem_twistedAction}
  In particular, any twisted symmetry $f_0$ of an indecomposable \ydm $M$ with
  respect to a given group-2-cocycle $\sigma_0\in Z^2(\Gamma,\k^\times)$ is an
  isomorphism of \ydms $M\rightarrow M_{\sigma_0}$, where $M_{\sigma_0}$ has
  accordingly modified $\Gamma$-action
  $$\bar{g}._{\sigma_0}v:=
    \sigma_0(\bar{g},\bar{h})\sigma_0^{-1}(\bar{h},\bar{g})(\bar{g}.v)$$
  or equivalently twisted characters $\chi_i^{\sigma_0}(\bar{g}):=
    \sigma_0(\bar{g},\bar{h})\sigma_0^{-1}(\bar{h},\bar{g})
    \chi^{\sigma_0}_i(\bar{g})$.
  By e.g. \cite{Mas08} Prop 5.2 this condition precisely means that
  $f_0$ can  be extended to an isomorphism of Nichols algebras $\B(M)\rightarrow
  \B(M_{\sigma_0})$ to the Doi twist $\B(M_{\sigma_0})\cong \B(M)_{\sigma_0}$.
\end{remark}
Next, we consider a family of twisted symmetries that respect a group
law:

\begin{definition}\label{def_TwistedSymmetryGroup}
  Let $M$ be a \ydm over an abelian group $\Gamma$ and $\Sigma$ a
  finite abelian group. We say $\Sigma$ \emph{acts as twisted
  symmetries} on $M$ iff 
  \begin{itemize}
    \item $\Sigma$ acts on the vector space $M$, i.e. denoting the action of an
      $p\in\Sigma$ by $f_p:\;M\rightarrow M$ we demand $f_1=id_M$
      and $f_pf_q=f_{pq}$ for all $p,q\in\Sigma$.
    \item For each $p\in\Sigma$ the action $f_p$ is a twisted symmetry.
  \end{itemize}
  For a given group-2-cocycle $\sigma\in Z^2(\Gamma,\Sigma^*)$, we say that
  $\Sigma$ acts on $M$ as twisted symmetry with respect to $\sigma$, iff
  for all $p\in\Sigma$ the twisted symmetry $f_p$ of $M$ is a twisted symmetry
  with respect to the group-2-cocycle 
  $$\sigma_p(\bar{g},\bar{h}):=\sigma(\bar{g},\bar{h})(p)$$
\end{definition}

\subsection{Main Construction Theorem}\label{sec_Construction}

Suppose a given central extension of an abelian
group $\Gamma$:
$$1\rightarrow \Sigma^* \rightarrow G\stackrel{\pi}{\longrightarrow}
  \Gamma\rightarrow 1 \qquad\Sigma\subset Z(\Gamma)$$
It can be described in terms of a cohomology class of 2-cocycles 
$[\sigma]\in H^2(\Gamma,\Sigma^*)$.
We fix a set-theoretic section
$s:\Gamma\rightarrow G$ of $\pi$ which is normalized, i.e. $s(1)=1$. This
corresponds to a choice of a specific representing 2-cocycle $\sigma\in
Z^2(\Gamma,\Sigma^*)$ with
$s(\bar{g})s(\bar{h})=\sigma(\bar{g},\bar{h})s(\bar{g}\bar{h})$. Different
choices of $s,\sigma$ will in what follows produce identical forms $\langle
\bar{g},\bar{h}\rangle_p 
  :=\sigma(\bar{g},\bar{h})(p)\sigma^{-1}(\bar{h},\bar{g})(p)$ and hence
identical notions of twisted symmetry.

\begin{theorem}[Covering Construction]\label{thm_Construction}
  Suppose now $M$ to be a \ydm over $\Gamma$ and an action of $\Sigma$
  on $M$ as twisted symmetries with respect to the
  $\sigma\in Z^2(\Gamma,\Sigma^*)$
  fixed above. Because $\Sigma$ is abelian, we may
  simultaneously diagonalize the action and decompose $M$ into eigenspaces
  $M^{[\lambda]}$ with simultaneously eigenvalues $\lambda\in\Sigma^*$. Then
  the following structures define a $G$-\ydm $\tilde{M}$, which 
  we call the \emph{covering \ydmP} of $M$:
  \begin{itemize}
    \item $\tilde{M}:=M$ as vector space
    \item The $G$-action is the pullback of the $\Gamma$-action via
      $\pi$.\newline
      Especially $\Sigma^*\subset G$ acts trivially and thus $\tilde{M}$ is
      not faithful.
    \item The eigenspaces $M^{[\lambda]}$ give rise to the $G$-homogeneous
     layers via
      $\tilde{M}_h:=M^{[hs(\bar{h})^{-1}]}_{\bar{h}}$.
      Note that $\pi(hs(\bar{h})^{-1})=1$ so, $\lambda:=hs(\bar{h})^{-1}$ is
      indeed an element of $\Ker(\pi)=\Sigma^*$.
  \end{itemize}
\end{theorem}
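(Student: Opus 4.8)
The plan is to verify directly the single axiom defining a Yetter--Drinfel'd module over $G$, namely that the $G$-action and the $G$-grading on $\tilde M$ are compatible: $g.\tilde M_h = \tilde M_{ghg^{-1}}$ for all $g,h\in G$. Everything else (that $\tilde M$ is a vector space, that the formulas for action and grading make sense) is either tautological or was already checked in the statement itself. First I would record the three things we genuinely know and will use: (i) $M$ is a $\Gamma$-\ydmP, so $\bar g.M_{\bar h}=M_{\bar g\bar h\bar g^{-1}}=M_{\bar h}$ since $\Gamma$ is abelian --- i.e. the $\Gamma$-action preserves each $\Gamma$-homogeneous layer; (ii) $\Sigma$ acts on $M$ by twisted symmetries with respect to $\sigma$, so each $f_p$ is $\Gamma$-colinear, hence the eigenspace decomposition $M=\bigoplus_{\lambda\in\Sigma^*}M^{[\lambda]}$ refines (is compatible with) the $\Gamma$-grading, and the simultaneous eigenspaces $M^{[\lambda]}_{\bar h}:=M^{[\lambda]}\cap M_{\bar h}$ give a $\Gamma\times\Sigma^*$-grading; (iii) by Corollary~\nref{cor_twistedAction} applied to $f_p$, on a $\Gamma$-homogeneous $v\in M_{\bar h}$ the twisted symmetry intertwines the $\Gamma$-action with the modified action $\bar g._{\sigma_p}v=\sigma_p(\bar g,\bar h)\sigma_p^{-1}(\bar h,\bar g)(\bar g.v)$ --- equivalently, $v\in M^{[\lambda]}_{\bar h}$ means exactly that $f_p(v)=\lambda(p)v$ for all $p$, and one reads off how the $\Gamma$-action moves an element of $M^{[\lambda]}_{\bar h}$ relative to the eigenspace filtration.

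The core computation is then the following. Fix $g,h\in G$ and write $\bar g=\pi(g)$, $\bar h=\pi(h)$, and $\lambda_h:=hs(\bar h)^{-1}\in\Sigma^*$, so that by definition $\tilde M_h=M^{[\lambda_h]}_{\bar h}$ (a subspace of the $\Gamma$-layer $M_{\bar h}$). Take $v\in\tilde M_h$. Its image $g.v$ is, by definition of the pulled-back action, just $\bar g.v$ computed in $M$; since the $\Gamma$-action preserves $M_{\bar h}$, we have $g.v\in M_{\bar h}=M_{\overline{ghg^{-1}}}$ (using $\Gamma$ abelian), so the $\Gamma$-layer is already correct. It remains to identify its $\Sigma^*$-eigenvalue, i.e. to show $g.v\in M^{[\lambda]}$ where $\lambda=(ghg^{-1})s(\overline{ghg^{-1}})^{-1}=(ghg^{-1})s(\bar h)^{-1}$. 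The key point is the commutator: in $G$, $ghg^{-1}h^{-1}\in\Sigma^*$ since $\Sigma^*=[G,G]$ here (or at least $\Sigma^*\supseteq[G,G]$ because $G/\Sigma^*=\Gamma$ is abelian), so $ghg^{-1}=[g,h]\,h$ with $[g,h]\in\Sigma^*$. Hence $\lambda=[g,h]\,h\,s(\bar h)^{-1}=[g,h]\,\lambda_h$. So I must prove that applying $\bar g.$ to an element of $M^{[\lambda_h]}_{\bar h}$ lands in $M^{[[g,h]\lambda_h]}_{\bar h}$; concretely, that for every $p\in\Sigma$, $f_p(\bar g.v)=\big([g,h](p)\cdot\lambda_h(p)\big)(\bar g.v)$. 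This is exactly where the twisted-symmetry hypothesis enters: $f_p(\bar g.v)=\bar g._{\sigma_p}f_p(v)=\langle\bar g,\bar h\rangle_p\,\bar g.(f_p(v))=\langle\bar g,\bar h\rangle_p\,\lambda_h(p)\,(\bar g.v)$, using that $f_p$ is $\Gamma$-colinear so $f_p(v)\in M_{\bar h}$ still, and that $f_p(v)=\lambda_h(p)v$. So everything reduces to the identity
$$\langle\bar g,\bar h\rangle_p=\sigma(\bar g,\bar h)(p)\,\sigma^{-1}(\bar h,\bar g)(p)=[g,h](p)$$
for all $p\in\Sigma$, i.e. that the symplectic form built from the cocycle $\sigma$ evaluates to the group commutator in the extension $G$. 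This is a standard fact about central extensions: choosing the section $s$ with $s(\bar g)s(\bar h)=\sigma(\bar g,\bar h)s(\bar g\bar h)$, one computes $s(\bar g)s(\bar h)s(\bar g)^{-1}s(\bar h)^{-1}=\sigma(\bar g,\bar h)\sigma(\bar h,\bar g)^{-1}$ (all the $s(\bar g\bar h)$ terms cancel, $\bar g\bar h=\bar h\bar g$), and since $[g,h]$ depends only on $\bar g,\bar h$ (the kernel is central) we get $[g,h]=\sigma(\bar g,\bar h)\sigma(\bar h,\bar g)^{-1}$, which is precisely $\langle\bar g,\bar h\rangle$ as an element of $\Sigma^*$. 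Combining the chain of equalities yields $f_p(g.v)=\lambda(p)(g.v)$ for all $p$, i.e. $g.v\in M^{[\lambda]}_{\overline{ghg^{-1}}}=\tilde M_{ghg^{-1}}$, which is the \ydmP\ axiom. One should also note the two bookkeeping remarks already flagged in the statement --- that $\lambda_h:=hs(\bar h)^{-1}$ genuinely lies in $\Ker\pi=\Sigma^*$ (immediate from $\pi\circ s=\mathrm{id}$) and that $\Sigma^*\subseteq G$ acts trivially on $\tilde M$ since the action factors through $\pi$, whence non-faithfulness --- and check that the $\tilde M_h$ do sum to all of $M$ and pairwise intersect trivially, which follows because as $h$ ranges over $G$ the pair $(\bar h,\lambda_h)$ ranges bijectively over $\Gamma\times\Sigma^*$ and $M=\bigoplus_{\bar h,\lambda}M^{[\lambda]}_{\bar h}$ is the $\Gamma\times\Sigma^*$-grading from (ii).

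The main obstacle is not conceptual but a matter of getting the twist bookkeeping exactly right: one must be careful that Corollary~\nref{cor_twistedAction} is applied with the correct cocycle $\sigma_p(\bar g,\bar h)=\sigma(\bar g,\bar h)(p)$ and the correct variance of the form $\langle\bar g,\bar h\rangle_p=\sigma(\bar g,\bar h)(p)\sigma^{-1}(\bar h,\bar g)(p)$, and that the commutator identity $[g,h]=\langle\bar g,\bar h\rangle$ in $\Sigma^*$ comes out with the same orientation --- a sign/inverse slip here would put $g.v$ in the layer indexed by $[g,h]^{-1}\lambda_h$ instead of $[g,h]\lambda_h$. Since the conventions are fixed upstream (the section $s$, the cocycle $\sigma$, the formula for $\langle,\rangle_p$, and the normalization $ghg^{-1}=[g,h]h$), this is just a careful matching exercise, and the twisted-symmetry axiom has evidently been set up precisely so that it works. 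I would also remark that as a braided vector space $\tilde M\cong M$: the braiding $c_{\tilde M\tilde M}$ sends $v\otimes w\mapsto (g.w)\otimes v$ for $v\in\tilde M_g$, and $g.w=\bar g.w=\pi(g).w$, while in $M$ the braiding $c_{MM}$ sends $v\otimes w\mapsto(\bar g.w)\otimes v$ for $v\in M_{\bar g}$ --- the same formula, because the eigenspace refinement only subdivides layers without changing how $\Gamma$ acts; hence $\B(\tilde M)\cong\B(M)$ as algebras, as claimed in the surrounding text.
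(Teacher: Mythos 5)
Your proposal is correct and follows essentially the same route as the paper: the paper's proof consists of exactly your two key steps, namely the eigenspace-permutation formula $\bar g.M^{[\lambda]}_{\bar h}=M^{[\sigma(\bar g,\bar h)\sigma^{-1}(\bar h,\bar g)\lambda]}_{\bar h}$ (its Claim~1) and the identity $[g,h]=\sigma(\bar g,\bar h)\sigma^{-1}(\bar h,\bar g)$ via the section $s$ and centrality of the kernel (its Claim~2), combined through $ghg^{-1}s(\overline{ghg^{-1}})^{-1}=[g,h]\cdot hs(\bar h)^{-1}$. Your additional bookkeeping (that the $\tilde M_h$ actually form a direct-sum grading) and the closing remark on the braided vector space isomorphism are consistent with, though not spelled out in, the paper's argument.
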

\begin{proof}
  To prove $M$ to be a well-defined \ydmP, we have to check
  that $\tilde{M}$ fulfills the (nonabelian) Yetter-Drinfel'd condition
  $g.\tilde{M}_{h}=\tilde{M}_{ghg^{-1}}$:\\

  {\bf Claim 1:} The $\Gamma$-action permutes simultaneous eigenspaces
  $M^{[\lambda]}$ as follows:
  $$\bar{g}.M_{\bar{h}}^{[\lambda]}=M_{\bar{h}}^{[   
  \sigma(\bar{g},\bar{h})\sigma^{-1}(\bar{h},\bar{g})\cdot \lambda]}$$
  This can just be calculated: Let $v\in M_{\bar{h}}^{[\lambda]}$, i.e. the
  twisted symmetry action of any $p\in\Sigma$ is
  $f_p(v)=\lambda(p)v$, then by the defining property of
  a twisted symmetry 
  \begin{align*}
    f_p(\bar{g}.v)
    &=\sigma_p(\bar{g},\bar{h})\sigma_p^{-1}(\bar{h},\bar{g})
      \bar{g}.f_p(v)\\
    &=\sigma_p(\bar{g},\bar{h})\sigma_p^{-1}(\bar{h},\bar{g})\cdot
      \lambda(p)v\\
    &=\left(\sigma(\bar{g},\bar{h})\sigma^{-1}(\bar{h},\bar{g})
      \cdot\lambda\right)(p)\cdot v
  \end{align*}
  and thus $\bar{g}.v$ is a simultaneous eigenvector of the $\Sigma$-action with
  eigenvalues $\sigma_p(\bar{g},\bar{h}) 
  \sigma_p^{-1}(\bar{h},\bar{g})\cdot \lambda$
  as claimed.\\
 
  {\bf Claim 2:} $\Gamma$ abelian implies the commutator
  in $G$ can be expressed as
  $$[G,G]\ni[g,h]=\sigma(\bar{g},\bar{h}) 
  \sigma^{-1}(\bar{h},\bar{g})\in \Sigma^*$$
  This is by definition of $\sigma$ true for elements $s(\bar{g}),s(\bar{h})$ in
  $\Im(s)$: 
  \begin{align*}
    [s(\bar{g}),s(\bar{h})]
    &=s(\bar{g})s(\bar{h})\cdot
      s(\bar{g})^{-1}\cdot s(\bar{h})^{-1}\\
    &=s(\bar{g})s(\bar{h})\cdot 
      s(\bar{g}\bar{h})^{-1}s(\bar{g}\bar{h})^{-1} \cdot
      s(\bar{g})^{-1}\cdot s(\bar{h})^{-1}\\
    \text{($\Gamma$ abelian)$\quad$}
    &=s(\bar{g})s(\bar{h})s(\bar{g}\bar{h})^{-1}\cdot
      s(\bar{h}\bar{g})^{-1}s(\bar{g})^{-1}\cdot s(\bar{h})^{-1}\\
    &=\sigma(\bar{g},\bar{h})\cdot\sigma^{-1}(\bar{h},\bar{g})
  \end{align*}
  General elements $g,h\in G$ differ from such elements in $\Im(s)$ by a factor
  in $\Ker(\pi)=\Sigma^*\subset G$. Because $\Sigma^*$ was supposed central in
  $G$, this does not change the commutator $[g,h]$, while the right hand side of
  the claim anyway only depends on the images $\bar{g},\bar{h}\in \Gamma$. Thus
  the claim holds in for general $g,h\in G$ as well.

  Claims 1 and 2 imply the asserted Yetter-Drinfel'd condition
  $g.\tilde{M}_h=\tilde{M}_{ghg^{-1}}$
  \begin{align*}	
      g.\tilde{M}_h
      &=\bar{g}.M^{[hs(\bar{h})^{-1}]}_{\bar{h}}\\
      \text{(claim 1)$\quad$}
      &=M^{[\sigma(g,h)\cdot\sigma^{-1}(h,g)
	\cdot hs(\bar{h})^{-1}]}_{\bar{h}}\\
      \text{(claim 2)$\quad$}
      &=M^{[ghg^{-1}h^{-1}
	\cdot hs(\bar{h})^{-1}]}_{\bar{h}}\\ 
      \text{($\Gamma$ abelian)$\quad$}
      &=M^{[ghg^{-1}s(\bar{g}\bar{h}\bar{g}^{-1})^{-1}]}_{\bar{h}}
      = \tilde{M}_{ghg^{-1}}
  \end{align*}
\end{proof}

If a central extension $\Sigma^*\rightarrow G\rightarrow\Gamma$ is a stem
extension $\Sigma^*\subset [G,G]$, then it is an easy group theoretic fact
that any preimage of any generating system of $\Gamma$ generates $G$. Hence:

\begin{corollary}
  For a stem extension, the covering $\tilde{M}$ of an indecomposable
  $\Gamma$-\ydm $M$ is an indecomposable $G$-\ydmP.
\end{corollary}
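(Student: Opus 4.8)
The plan is to combine the Covering Construction Theorem (Theorem \nref{thm_Construction}), which provides a well-defined $G$-\ydmP{} $\tilde{M}$, with the cited elementary group-theoretic fact about stem extensions: if $\Sigma^*\subseteq[G,G]$ and $\bar{g}_1,\dots,\bar{g}_r$ generate $\Gamma$, then any lift $h_1,\dots,h_r\in G$ with $\pi(h_i)=\bar{g}_i$ already generates $G$. Granting this, I would argue as follows. Let $H:=\langle h_1,\dots,h_r\rangle$ be the subgroup generated by the lifts; then $\pi(H)=\langle\bar{g}_1,\dots,\bar{g}_r\rangle=\Gamma$, so $H\Sigma^*=G$. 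The commutator subgroup of $G$ lies in $H$ (commutators of the $h_i$ exhaust a generating set of $[G,G]$ since $\Gamma$ is abelian, so $[G,G]\subseteq H$), hence $\Sigma^*\subseteq[G,G]\subseteq H$, and therefore $H=G$. This is exactly the statement invoked before the corollary, so for the purposes of this excerpt it may simply be assumed.

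The main point is then to translate indecomposability of $M$ into indecomposability of $\tilde{M}$ via their supports. By definition $M$ is link-indecomposable over $\Gamma$ iff $\mathrm{supp}_\Gamma(M)=\{\bar{h}\mid M_{\bar{h}}\neq 0\}$ generates $\Gamma$; and $\tilde{M}$ is link-indecomposable over $G$ iff $\mathrm{supp}_G(\tilde{M})=\{h\mid \tilde{M}_h\neq 0\}$ generates $G$. By the construction, $\tilde{M}_h=M^{[hs(\bar{h})^{-1}]}_{\bar{h}}$, so $\tilde{M}_h\neq 0$ for some preimage $h$ of a given $\bar{h}$ precisely when $M_{\bar{h}}\neq 0$ (the nonzero space $M_{\bar{h}}$ decomposes into the eigenspaces $M^{[\lambda]}_{\bar{h}}$, at least one of which is nonzero, and that $\lambda$ singles out the preimage $h=\lambda\, s(\bar{h})$ lying in the $G$-support). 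Consequently $\pi\big(\mathrm{supp}_G(\tilde{M})\big)=\mathrm{supp}_\Gamma(M)$, which generates $\Gamma$ by hypothesis. Now pick one preimage $h_{\bar{h}}\in\mathrm{supp}_G(\tilde{M})$ for each $\bar{h}\in\mathrm{supp}_\Gamma(M)$; these form a lift of a generating set of $\Gamma$, hence by the stem-extension fact they generate $G$. Since $\mathrm{supp}_G(\tilde{M})$ contains this generating set, it generates $G$, i.e. $\tilde{M}$ is link-indecomposable.

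I do not expect any serious obstacle: the group theory is the standard lemma already quoted in the excerpt, and the \ydmP{} structure of $\tilde{M}$ is handed to us by Theorem \nref{thm_Construction}. The only point requiring a line of care is the support computation $\pi(\mathrm{supp}_G(\tilde{M}))=\mathrm{supp}_\Gamma(M)$ — specifically checking that every $\Gamma$-homogeneous layer $M_{\bar{h}}$ genuinely contributes at least one nonzero $G$-layer above $\bar{h}$, which follows immediately because $M_{\bar{h}}=\bigoplus_{\lambda\in\Sigma^*}M^{[\lambda]}_{\bar{h}}$ as $\Sigma$ acts on each $M_{\bar{h}}$ (preserving the $\Gamma$-grading) and can be simultaneously diagonalized. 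Everything else is formal.
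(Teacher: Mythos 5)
Your proposal is correct and follows exactly the route the paper intends: the paper's entire ``proof'' is the remark immediately preceding the corollary, that for a stem extension any preimage of a generating set of $\Gamma$ generates $G$, combined implicitly with the observation that the $G$-support of $\tilde{M}$ surjects onto the $\Gamma$-support of $M$. You have simply spelled out both halves (including a correct proof of the group-theoretic fact and the eigenspace decomposition $M_{\bar{h}}=\bigoplus_{\lambda}M^{[\lambda]}_{\bar{h}}$) in more detail than the paper does.
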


By construction the $G$-action on $\tilde{M}$ factorizes to the
$\Gamma$-action on $M$, thus:

\begin{corollary}\label{cor_braidingIsomorphic}
  $M,\tilde{M}$ are isomorphic as braided vector spaces. Especially
  $\B(M),\B(\tilde{M})$ are isomorphic as $\Z$-graded  algebras (see e.g.
  \cite{AS02} Section 5.1).
\end{corollary}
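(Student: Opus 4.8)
The statement to be proved is Corollary \ref{cor_braidingIsomorphic}: that $M$ and $\tilde M$ are isomorphic as braided vector spaces, and consequently $\B(M)\cong\B(\tilde M)$ as $\Z$-graded algebras. The plan is to write down the identity map on the common underlying vector space and check it intertwines the two braidings. Recall from the lemma in the Preliminaries that the braiding on a \ydm over a group $\Gamma$ is $c_{MM}(v\otimes w)=\bar g.w\otimes v$ for $v\in M_{\bar g}$, and likewise $c_{\tilde M\tilde M}(v\otimes w)=g.w\otimes v$ for $v\in\tilde M_g$. So I would first observe that both braidings are built from exactly two ingredients: the grading group element acting, and the action itself.

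\textbf{First step.} Fix the linear isomorphism $\phi:=\mathrm{id}:M\to\tilde M$ (legitimate since $\tilde M=M$ as a vector space by Theorem \ref{thm_Construction}). I then need to show $(\phi\otimes\phi)\circ c_{MM}=c_{\tilde M\tilde M}\circ(\phi\otimes\phi)$, i.e. that the plain identity is a morphism of braided vector spaces. Take a homogeneous $v\in M_{\bar g}$; by the construction, $v$ lies in some $\tilde M$-homogeneous layer $\tilde M_h$ with $\pi(h)=\bar g$ (concretely $h=\lambda\, s(\bar g)$ for the eigenvalue $\lambda$ of $v$). For $w\in M$ arbitrary, $c_{MM}(v\otimes w)=(\bar g.w)\otimes v$ using the $\Gamma$-action, whereas $c_{\tilde M\tilde M}(v\otimes w)=(h.w)\otimes v$ using the $G$-action. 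By the defining bullet of Theorem \ref{thm_Construction}, the $G$-action on $\tilde M$ is the pullback of the $\Gamma$-action along $\pi$, so $h.w=\pi(h).w=\bar g.w$. Hence the two expressions agree and $\phi$ is a braided isomorphism.

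\textbf{Second step.} For the statement about Nichols algebras, I would invoke functoriality of the Nichols algebra construction with respect to morphisms of braided vector spaces: $\B(-)$ depends, by its very definition as a subquotient of the tensor algebra determined by the skew derivations (Definition of $\B(M)$), only on the braided-vector-space structure and not on the ambient \ydm or the group. An isomorphism of braided vector spaces $\phi:M\to\tilde M$ extends to an isomorphism of the tensor algebras $T(M)\to T(\tilde M)$ respecting the gradings and the braidings, hence carries the defining ideal $\mathfrak I$ to $\mathfrak I$, and descends to a $\Z$-graded algebra isomorphism $\B(M)\to\B(\tilde M)$. This is precisely the content cited as \cite{AS02} Section 5.1, so I would simply refer to it rather than redo the argument.

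\textbf{Main obstacle.} There is no real obstacle here — the corollary is essentially immediate from the construction. The one point requiring a line of care is making sure the identity map really is well defined between the two objects, i.e. that nothing in Theorem \ref{thm_Construction} alters the underlying vector space or the way the grading element acts; once one notes that only the \emph{labelling} of homogeneous components by $G$ rather than $\Gamma$ has changed, and that $\pi$ collapses that relabelling back, everything goes through. (If one wanted to be scrupulous one could also remark that $\phi$ need not be a morphism of \ydms — it is not, since the gradings live over different groups — but that is irrelevant: braided-vector-space structure is all that $\B(-)$ sees.)
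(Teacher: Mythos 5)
Your proof is correct and takes essentially the same route as the paper, whose entire argument for this corollary is the one-line remark preceding it: the $G$-action on $\tilde M$ is the pullback of the $\Gamma$-action along $\pi$, so the braiding, which depends only on how the grading element acts, is literally unchanged, and $\B(-)$ sees only the braided vector space. One tiny imprecision worth noting: a $\Gamma$-homogeneous $v\in M_{\bar g}$ does not in general lie in a single layer $\tilde M_h$ (it is a sum of $\Sigma$-eigenvectors, each in some $\tilde M_{\lambda s(\bar g)}$), but since every such $h$ satisfies $\pi(h)=\bar g$ and the action factors through $\pi$, your computation goes through term by term.
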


\begin{remark}
  Note that in \cite{Len12} we gave a much more general construction:
  \begin{itemize}
    \item A Hopf algebra $H$ and a group $\Sigma$ of Bigalois objects
      $H_p$ yield a Hopf algebra structure on the direct sum, the covering
      Hopf algebra (see \cite{Len12} Thm. 1.6): 
      $$\Omega:=\bigoplus_{p\in\Sigma} H_p$$
      fitting into an exact sequence of Hopf algebras (see \cite{Len12} Thm.
      1.13)
      $$0\rightarrow \k^\Sigma\rightarrow \Omega \rightarrow H\rightarrow 0 $$
      The covering Hopf algebra $\Omega$ thereby can only be pointed, if among
      others $H$ is pointed and $\Sigma$ is an abelian group.  
    \item If we specialize this to the bosonization $H=\k[\Gamma]\#\B(M)$
      of a Nichols algebra of a \ydm $M$ over an arbitrary group $\Gamma$,
      we yield a covering Nichols algebra $\B(\tilde{M})$ over a central     
      extension. 
      $$1\rightarrow \Sigma^* \rightarrow G \rightarrow
      \Gamma\rightarrow 1$$
      The construction in \cite{Len12} Thm. 4.3 uses a newly defined coaction.
      The direct formulation in the Construction Theorem
      \nref{thm_Construction} follows after diagonalizing the twisted
      symmetries. As an example for
      $\Gamma$ nonabelian, we have also constructed e.g. a Nichols algebra of
      dimension $24^2$ over $GL_2(\F_3)\rightarrow\S_4$.
  \end{itemize}
\end{remark}

\subsection{Impact On The Dynkin Diagram: Folding}

First we observe, that if a twisted symmetry of a $\Gamma$-\ydm $M$ directly
permutes the simple summands $M_i$ (and thus the index set $I$), then it is
already an automorphism of the $q$-diagram of $M$:

\begin{definition}
  Let $M=\bigoplus_{i\in I} M_i=\bigoplus_{i\in I}x_i$ be a vector space
  decomposed  into $1$-dimensional sub-vector spaces according to some index set
  $I$, e.g.  the decomposition of a \ydm over an abelian group $\Gamma$
  decomposed  into simple summands. Then an action
  $\left(f_p\right)_{p\in\Sigma}$ of a group $\Sigma$ on  $M$ is called
  \emph{permutation action}, iff it is induced by a permutation representation 
  $\rho$ on $I$, i.e.
  $$\rho:\;\Sigma\rightarrow \Aut(I)=\S_{|I|}
    \qquad \forall_{i\in I}\;f_p(x_i)=x_{p.i}$$
  If $M$ is moreover a diagonally braided vector space with braiding matrix 
  $q_{ij}$ with respect to the basis $\{x_i\}_{i\in I}$, then we denote the
  permuted braiding matrix by $(q^{(p)})_{ij}:=q_{p.i,p.j}$.
\end{definition}

\begin{remark}
  Note without proof that the technical assumption in the previous
  definition is an implicit necessity to obtain minimally
  indecomposable Nichols algebras. Certain non-minimally indecomposable Nichols
  algebras may however require the consideration of non-permutation actions
  (e.g in Remark \nref{rem_class3}).
\end{remark}

We may now for abelian $\Gamma$ express the twisted symmetry condition of a
given permutation action in terms of the permuted braiding matrix. We
especially recognize $\Sigma$ to consist
necessarily of automorphism of the $q$-diagram:

\begin{corollary}\label{cor_diagramAutomosphisms}
  Let  $M=\bigoplus_{i\in I} M_i=\bigoplus_{i\in I}x_i$ be a \ydm over an
  abelian group $\Gamma$ and $\Sigma$ a group with permutation action
  on the decomposed $M$. Then the action is a action by twisted symmetries
  according to Definition \nref{def_TwistedSymmetryGroup} iff
    \begin{itemize}
      \item The $\Sigma$-action on $I$ only permutes elements $i,j$ with
	$M_i,M_j$ in the same homogeneous component of $M$.
      \item The $\Sigma$-action on $I$ preserves the self-braiding
	$$(q^{(p)})_{ii}=q_{ii}$$
      \item The $\Sigma$-action on $I$ preserves the monodromy
	$$(q^{(p)})_{ij}(q^{(p)})_{ji}=q_{ij}q_{ji}$$
    \end{itemize}
    Moreover, the $\Sigma$-action is a twisted symmetry with respect to
	a 2-cocycle $\sigma\in Z^2(\Gamma,\Sigma^*)$, iff the braiding matrix
	transforms under the action of all $p\in\Sigma$ according to the
	prescribed bimultiplicative form  
	$$\langle \bar{g},\bar{h}\rangle_p 
	  :=\sigma(\bar{g},\bar{h})(p)\sigma^{-1}(\bar{h},\bar{g})(p)$$
	$$q_{ij}^{(p)}=\langle \bar{g}_i,\bar{g}_j \rangle_p
	  q_{ij}$$
    Hence especially the permutation action is an automorphism of the
    $q$-diagram of $M$,
    that has by definition node set $I$ and is decorated with $q_{ii}$ and
    $q_{ij}q_{ji}$.
\end{corollary}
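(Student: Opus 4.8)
The plan is to unwind the definitions and carry out the (one-dimensional) braiding computations; there is essentially no mathematical obstacle, only bookkeeping. Since the permutation representation $\rho:\Sigma\to\Aut(I)$ automatically yields $f_1=\mathrm{id}_M$ and $f_pf_q=f_{pq}$, the content of ``$\Sigma$ acts as twisted symmetries'' (Definition \ref{def_TwistedSymmetryGroup}) reduces to the single requirement that each $f_p$ be a twisted symmetry. So I would fix $p$, abbreviate $f:=f_p$ with $f(x_i)=x_{p.i}$, and — noting that $f$ moves the summand $M_i$ to $M_{p.i}$ — read the equations in the definition of a twisted symmetry as the corresponding naturality squares, with $c_{M_iM_i}$ and $c_{M_iM_j}c_{M_jM_i}$ on one side replaced by $c_{M_{p.i}M_{p.i}}$ and $c_{M_{p.i}M_{p.j}}c_{M_{p.j}M_{p.i}}$ on the other. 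All the braiding operators involved act on one-dimensional spaces: $c_{M_iM_j}(x_i\otimes x_j)=q_{ij}\,x_j\otimes x_i$, so the self-braiding acts on $M_i\otimes M_i$ by the scalar $q_{ii}$ and the monodromy acts on $M_i\otimes M_j$ by the scalar $q_{ij}q_{ji}$.

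Next I would translate the three conditions of a twisted symmetry one at a time. First, $f$ is colinear (preserves the $\Gamma$-grading) iff $x_{p.i}\in M_{\bar g_i}$ for every $i$, i.e. iff $\bar g_{p.i}=\bar g_i$, i.e. iff the $\Sigma$-action on $I$ only permutes indices lying in the same homogeneous component; this is the first bullet. Second, applying $f\otimes f$ to $x_i\otimes x_i$ and comparing the scalar produced by the self-braiding on $M_i\otimes M_i$ with the one on $M_{p.i}\otimes M_{p.i}$ shows $f$ preserves the self-braiding iff $q_{p.i,p.i}=q_{ii}$, i.e. $(q^{(p)})_{ii}=q_{ii}$ — the second bullet. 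Third, the same comparison for the monodromy on $x_i\otimes x_j$ gives $q_{p.i,p.j}q_{p.j,p.i}=q_{ij}q_{ji}$, i.e. $(q^{(p)})_{ij}(q^{(p)})_{ji}=q_{ij}q_{ji}$ — the third bullet. Ranging over all $p\in\Sigma$ gives the first asserted equivalence.

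For the ``Moreover'' part, Definitions \ref{def_TwistedSymmetryGroup} and \ref{def_TwistedSymmetryWithRespect} together say that $\Sigma$ acts as twisted symmetries with respect to $\sigma$ iff each $f_p$ is a twisted symmetry and, in addition, $c_{M_{p.i}M_{p.j}}(f_p\otimes f_p)=\langle\bar g_i,\bar g_j\rangle_p\,(f_p\otimes f_p)c_{M_iM_j}$ with $\langle\bar g,\bar h\rangle_p=\sigma(\bar g,\bar h)(p)\sigma^{-1}(\bar h,\bar g)(p)$; evaluating both sides on $x_i\otimes x_j$ and reading off scalars turns this into $(q^{(p)})_{ij}=\langle\bar g_i,\bar g_j\rangle_p\,q_{ij}$ for all $i,j$. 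As already observed after Definition \ref{def_TwistedSymmetryWithRespect}, $\langle,\rangle_p$ is automatically skew-symmetric and isotropic, so once the transformation law holds the case $i=j$ recovers $(q^{(p)})_{ii}=q_{ii}$ and multiplying the $ij$- and $ji$-instances recovers $(q^{(p)})_{ij}(q^{(p)})_{ji}=q_{ij}q_{ji}$; thus in the presence of grading preservation the transformation law is equivalent to the full ``twisted symmetry with respect to $\sigma$'' condition. Finally, the closing assertion is immediate: the $q$-diagram of $M$ has node set $I$, node decoration $q_{ii}$, an edge $ij$ exactly when $q_{ij}q_{ji}\neq 1$, and edge decoration $q_{ij}q_{ji}$, and the second and third bullets say precisely that the permutation $p$ respects all of this data. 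The only step needing care is the bookkeeping indicated above — reading the twisted-symmetry equations as naturality squares because $f_p$ moves $M_i$ to $M_{p.i}$, and consistently reducing each braiding to its scalar action on the one-dimensional $M_i\otimes M_j$ — but this is the entire difficulty, not a genuine obstacle.
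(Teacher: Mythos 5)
Your proposal is correct and does exactly what the paper intends: the corollary is stated without proof, being regarded as immediate from unwinding Definitions \nref{def_TwistedSymmetryWithRespect} and \nref{def_TwistedSymmetryGroup} on the one-dimensional summands, and your bookkeeping (reading the commutation relations as naturality squares since $f_p$ sends $M_i$ to $M_{p.i}$, then comparing scalars) is the right way to make that precise. Your added observation that isotropy and skew-symmetry of $\langle,\rangle_p$ let the transformation law recover the self-braiding and monodromy conditions is a correct and worthwhile clarification of the ``Moreover'' equivalence.
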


%
%
%
%

Next we calculate the Dynkin diagram of the covering Nichols algebra
$\B(\tilde{M})$ of a Nichols algebra $\B(M)$ of a \ydm $M$ over an abelian group
$\Gamma$. We restrict to specific scenarios appearing in the present
article (especially $p=2$), but similar calculations can be carried out for
other situations as well.

\begin{definition}
  Let $M=\bigoplus_{i\in I} M_i=\bigoplus_{i\in I}\O^{\chi_i}_{\bar{g}_i}$ be
  some $\Gamma$-\ydm and let $\Sigma=\Z_2=\langle \theta\rangle$  act
  on $M$ as twisted permutation symmetries. Then $I$ decomposes into orbits of
  length $1$ resp. $2$. We call such nodes $i\in I$
  \emph{inert} resp. \emph{split}.
\end{definition}
In this situation the splitting behaviour of a node is determined by whether
it's decoration is central in $G$:

\begin{tabular}{p{10cm}p{3cm}p{3cm}}
 \begin{lemma}\label{lm_SplitInert}
  Let $\Sigma$ acts as twisted permutation symmetries: If a node
  $i\in I$ is inert, then necessarily its decoration $\bar{g}_i\in\Gamma$
  has central image $s(\bar{g}_i)\in G$. If  moreover all
  1-dimensional simple summands $M_i$ of $M$ are mutually non-isomorphic, then
  the converse holds also: A node $i\in
  I$ is inert resp. split iff $s(\bar{g}_i)\in G$ is central resp. noncentral in
  $G$.\newline

  Hence $\tilde{M}$ decomposes into simple $G$-\ydms
  $\tilde{M}_{\tilde{k}}$, where the new nodes $\tilde{k}\subset I$ are
  $\Sigma$-orbits of cardinality $1$ resp. $2$ and are called inert resp. split
  as well. These subsets of $I$ hence form the new nodes set $\tilde{I}$.
  \end{lemma}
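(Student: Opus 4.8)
The plan is to track how the $G$-graduation on $\tilde M$ interacts with conjugacy, using the explicit description $\tilde M_h := M^{[hs(\bar h)^{-1}]}_{\bar h}$ from the Construction Theorem \nref{thm_Construction} together with Claim 1 of its proof. First I would recall that for an inert node $i\in I$ the twisted symmetry $f_\theta$ fixes the line $M_i = x_i\k$, hence $x_i$ is an eigenvector, say $f_\theta(x_i)=\lambda_i(\theta)x_i$ with $\lambda_i\in\Sigma^*$; then $M_i$ sits in the $G$-layer $\tilde M_{g_i}$ where $g_i := \lambda_i\cdot s(\bar g_i)\in G$ is a fixed preimage of $\bar g_i$. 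To see $g_i$ is central, I would use Claim 1: for any $\bar g\in\Gamma$, acting by $\bar g$ sends $M_{\bar g_i}^{[\lambda_i]}$ to $M_{\bar g_i}^{[\langle\bar g,\bar g_i\rangle_\theta\cdot\lambda_i]}$, but since $M_i$ is $1$-dimensional and $\Gamma$-homogeneous it is stable under the whole $\Gamma$-action; hence $\langle\bar g,\bar g_i\rangle_\theta=1$ for all $\bar g$, which by Claim 2 of Theorem \nref{thm_Construction} says $[g,g_i]=1$ for all $g\in G$, i.e. $g_i\in Z(G)$.

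For the converse, assume all the $M_i$ are mutually non-isomorphic and suppose $s(\bar g_i)$ — equivalently $\bar g_i$ — has the property that $[g,s(\bar g_i)]=1$ for all $g\in G$; by Claim 2 this is $\langle\bar g,\bar g_i\rangle_\theta=1$ for all $\bar g\in\Gamma$. I would then show $i$ cannot be split: if $i$ lay on a $\Sigma$-orbit $\{i,j\}$ of length $2$ with $i\ne j$, then $f_\theta$ maps $M_i$ isomorphically onto $M_j$, and since $f_\theta$ is a twisted symmetry with respect to $\sigma$ (Corollary \nref{cor_diagramAutomosphisms}), comparing the $\Gamma$-actions forces $M_j\cong (M_i)_{\sigma_\theta}$, whose character differs from $\chi_i$ by the twist $\bar g\mapsto\langle\bar g,\bar g_i\rangle_\theta$. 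Under our hypothesis this twist is trivial, so $M_i\cong M_j$ as $\Gamma$-\ydms with $i\ne j$, contradicting the non-isomorphism assumption. Hence the orbit of $i$ has length $1$, i.e. $i$ is inert.

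Finally, for the decomposition statement, I would package the above: the $\Sigma$-orbits in $I$ partition $I$, orbits of length $1$ contributing $1$-dimensional simple $G$-\ydms $\tilde M_{\tilde k}$ (the inert summands, $G$-homogeneous as just shown), and orbits $\{i,j\}$ of length $2$ contributing the $2$-dimensional $G$-\ydm $\tilde M_i\oplus\tilde M_j$, which is simple over $G$ because by Corollary \nref{cor_diagramAutomosphisms} the summands $M_i,M_j$ share the same $\bar g_i=\bar g_j$ in $\Gamma$ but get genuinely distinct $G$-graduations $g_i\ne g_j$ lying in a single nontrivial $G$-conjugacy class (the coset $\Sigma^*$-translates forced by Claim 1 with $\langle\bar g,\bar g_i\rangle_\theta$ nontrivial for some $\bar g$); the $G$-action being transitive on these two graded lines up to scalars, no proper sub-\ydm exists. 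I expect the main obstacle to be the converse direction — specifically, showing cleanly that the non-isomorphism hypothesis is exactly what is needed to rule out a split node with central decoration, since one must rule out the degenerate possibility that $f_\theta$ swaps two isomorphic summands while still being a genuine twisted symmetry; the argument above via the $\sigma$-twisted character handles it, but care is needed because $f_\theta$ need not act as the identity even on an inert line, so the eigenvalue bookkeeping in $\Sigma^*$ must be done explicitly.
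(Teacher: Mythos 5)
Your proposal is correct and follows essentially the same route as the paper: both directions reduce to Claims 1 and 2 of Theorem \ref{thm_Construction}, i.e.\ to whether the form $\langle\bar g,\bar g_i\rangle_\theta=[g,s(\bar g_i)]$ is trivial, with the non-isomorphism hypothesis used exactly as in the paper to rule out a length-two orbit of isomorphic summands. The only differences are cosmetic — you argue ``inert $\Rightarrow$ central'' directly via the eigenvalue bookkeeping of Claim 1 where the paper proves the contrapositive via the twisted character of Remark \ref{rem_twistedAction}, and you additionally spell out the simplicity of the two-dimensional split summands, which the paper leaves implicit.
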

  &\hspace{0.3cm}\raisebox{-\totalheight}{\imageplain{0.26}{inert}}
  &  \hspace{-.5cm}\raisebox{-\totalheight}{\imageplain{0.26}{split}}
\end{tabular}

\begin{proof}
  Because we consider an action of $\Sigma=\Z_2$ on a set $I$, the orbits have
  length $1,2$.\\

  For the {\bf first claim}, we assume some $i_1\in I$ with
  $s(\bar{g}_{i_1})\not\in
  Z(G)$ and prove $i_1$ to be a split node: By assumption of noncentrality
  there exist some $g\in G$ with commutator
  $$[g,s(\bar{g}_{i_1})]=\theta^*\in \Sigma^*\subset G$$
  This is, because $\Gamma$ is abelian and hence every commutator lays in the
  kernel $\Sigma^*$ of the central extension; if the commutator is nontrivial,
  it has to coincide with the generator $\theta^*$ of $\Sigma^*\cong \Z_2$,
  i.e. the element with $\theta^*(\theta)=-1_\k$. By
  claim 2 in the proof of Theorem \nref{thm_Construction} we then have 
  $$\sigma(\bar{g},\bar{g}_{i_1})\sigma^{-1}(\bar{g}_{i_1},\bar{g})
    =[g,s(\bar{g}_{i_1})]=\theta^*$$
  We assumed $\Sigma$ to act by twisted permutation symmetries $f_p$, hence
  $f_\theta(M_{i_1})=:M_{i_2}$ is another summand of $M$ and we wish to prove
  $i_1\neq i_2$. This finally follows from Remark \nref{rem_twistedAction},
  as the twisted $\Gamma$-action on $M_{i_2}$ is 
  $$\bar{g}._{\sigma_\theta}v:=\sigma_\theta(\bar{g},\bar{h})
    \sigma_\theta^{-1}(\bar{h},\bar{g})\cdot\bar{g}.v
    =\sigma(\bar{g},\bar{h})
    \sigma^{-1}(\bar{h},\bar{g})(\theta)\cdot\bar{g}.v
    =\theta^*(\theta)\cdot\bar{g}.v=-\bar{g}.v$$
  This is a different $\Gamma$-action, hence $M_{i_1}\not\cong
  M_{i_2}$ and $i_1\neq i_2$ and thus the node is split.\\

  For the {\bf second claim}, assume now moreover that all 1-dimensional simple
  summands $M_i$ of $M$ are
  mutually non-isomorphic, then we also prove the converse: Suppose some
  $i_1\in I$ with $s(\bar{g}_{i_1})\in Z(G)$, then we prove $i_1$ to be inert:
  By assumption of centrality the commutator $[g,s(\bar{g}_i)]=1$ for all $g\in
  G$. By  claim 2 in the proof of Theorem \nref{thm_Construction} we have 
  $$\sigma(\bar{g},\bar{g}_{i_1})\sigma^{-1}(\bar{g}_{i_1},\bar{g})
    =[g,s(\bar{g}_{i_1})]=1_{\Sigma^*}$$
  We assumed $\Sigma$ acts as twisted permutation symmetries,
  hence $f_\theta(M_{i_1})$ is also a summand $M_{i_2}$ of $M$; we wish to
  prove $i_1= i_2$. By Remark \nref{rem_twistedAction}, the twisted
  $\Gamma$-action on $M_{i_2}$ is:
  $$\bar{g}._{\sigma_p}v:=\sigma_p(\bar{g},\bar{h})
    \sigma_p^{-1}(\bar{h},\bar{g})(\bar{g}.v)=\bar{g}.v$$
  Thus $M_{i_1}\cong M_{i_2}$ and by the additional assumption hence
  $i_1=i_2$ and $i_1$ is inert.
\end{proof}

\newpage

\begin{theorem}\label{thm_Fold}
  Let $M=\bigoplus_i M_i$ be a \ydm over the abelian group $\Gamma$ and
  $\Z_2\to  G\to\Gamma$ a stem extension as above. Suppose again the simple
  summands $M_i$ to be mutually  nonisomorphic and consider the covering 
  $\tilde{M}=\bigoplus_{\tilde{i}\in\tilde{I}}\tilde{M}_{\tilde{i}}$
  constructed in Theorem \nref{thm_Construction}. The set $\tilde{I}$ of
  simple summands $\tilde{M}_{\tilde{i}}$ has been described in the previous
  Lemma \nref{lm_SplitInert}.\\

  We now assume several situations (relevant to this article) for the
  Cartan matrix resp. Dynkin diagram of $M$ and calculate in these
  situations Cartan matrix $\tilde{C}_{\tilde{k},\tilde{l}}$ and
  hence Dynkin diagram of the covering $\tilde{M}$ (``folded diagrams''):\\
  \begin{enumerate}
    \item {\bf Disconnected:} 
      Let $\tilde{k},\tilde{l}\in\tilde{I}$ be arbitrary nodes
      (split or inert) and suppose all elements $k\in\tilde{k}\subset I$ are
      disconnected to all elements in $l\in\tilde{l}\subset I$ i.e.
      $q_{kl}q_{lk}=1$ and $\ad{x_k}{x_l}=0$. Equivalently we may assume the 
      q-subdiagram of $M$ to consist of mutually disconnected components
      $\tilde{k}\times\tilde{l}$. Then the Cartan matrix of
      $\B(\tilde{M})$ is also diagonal $\tilde{C}_{\tilde{k}\tilde{l}}=0$,
      i.e. the covering nodes $\tilde{k},\tilde{l}$ of $\tilde{M}$ are	
      disconnected as well.\\

    \item {\bf Inert Edge:} Let $\tilde{k}=\{k\},\tilde{l}=\{l\}\subset I$
      be inert nodes. Then the Cartan matrix entry in the covering Nichols
      algebra is of identical type $\tilde{C}_{\tilde{k},\tilde{l}}=C_{k,l}$.\\

    \item {\bf Split Edge:} Let $\tilde{k}=\{k_1,k_2\}$ and
      $\tilde{k}=\{k_1,k_2\}$
      both be split and (after possible renumbering) let
      the Dynkin diagram of $\B(M)$ restricted to the $4$ simple
      $\Gamma$-\ydms $M_{k_1},M_{k_2},M_{l_1},M_{l_2}$ be of type $A_2\times
      A_2$. This means by definition: \newline
      \begin{tabular}{p{10cm}p{3cm}}
      \begin{center}$\ad{M_{k_i}}{M_{l_i}}=:N_i\neq\{0\} \qquad
	i=1,2$\end{center}$\quad$\newline
      where $N_i\subset \B(M)$ and all other ad-spaces are trivial.
      Then the Cartan matrix entry in the covering Nichols algebra over $G$ is
      of type $A_2$ with $\ad{\tilde{M}_k}{\tilde{M}_l}=N_1 \oplus N_2
      \subset\B(\tilde{M})$\newline
      \begin{center}$\left(\begin{matrix}
	  C_{\tilde{k}\tilde{k}} & C_{\tilde{k}\tilde{l}}\\
	  C_{\tilde{l}\tilde{k}} & C_{\tilde{l}\tilde{l}}
	  \end{matrix}\right)
	  =
	  \left(\begin{matrix}
	  2 & -1\\
	  -1 & 2
	\end{matrix}\right)$\end{center}$\quad$\newline
	See the example over $G\cong \D_4\rightarrow \Z_2^2\cong\Gamma$ in
	Section \nref{sec_D4}.
&\hspace{0.5cm}\raisebox{+0.00\totalheight}{\imageplain{0.26}{DynkinA2A2A2}} 
      \end{tabular}
\newpage 
    \item {\bf Ramified Edge:} Let $\tilde{k}=\{k_1,k_2\}\in\tilde{I}$ be split
      and $\tilde{l}=\{l\}$ be inert. Let
      the Dynkin diagram of $\B(M)$ restricted to the $3$ simple $\Gamma$-\ydms
      $M_{k_1},M_{k_2},M_{l}$ be of type $A_3$
      with $l$ the middle node, i.e.\newline
	\begin{center}
	$\ad{M_{k_i}}{M_{l}}=:N_i\neq\{0\} \qquad i=1,2$
	\end{center}
	\begin{center}
	$\ad{M_{k_2}}{N_1}=\ad{M_{k_1}}{N_2}=:N_{12}\neq\{0\}$
	\end{center}$\quad$\newline
    \begin{tabular}{p{10cm}p{3cm}}
      and all other ad-spaces are trivial. Then the Cartan matrix entry
      in the covering Nichols algebra over $G$ is
      of type $B_2$ with the split node $\tilde{k}$ the shorter root:\newline
      \begin{center}$\left(\begin{matrix}
	  C_{\tilde{k}\tilde{k}} & C_{\tilde{k}\tilde{l}}\\
	  C_{\tilde{l}\tilde{k}} & C_{\tilde{l}\tilde{l}}
	  \end{matrix}\right)
	  =\left(\begin{matrix}
	  2 & -2\\
	  -1 & 2
	\end{matrix}\right)$\end{center}$\quad$\newline
      This case can never occur isolated in indecomposable coverings, but the
      reader my check for example the ramified edge in $E_6\mapsto F_4$ in
      Section \nref{sec_RamifiedEF}.
&\hspace{.5cm}\raisebox{+.30\totalheight}{\imageplain{0.26}{DynkinA3B2}} 
      \end{tabular}
  \end{enumerate}
\end{theorem}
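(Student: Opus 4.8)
The plan is to carry out every computation inside one fixed algebra. By Corollary \nref{cor_braidingIsomorphic} the Nichols algebras $\B(M)$ and $\B(\tilde M)$ coincide as $\Z$-graded braided algebras; since $M$ has diagonal braiding with one-dimensional summands $M_i=x_i\k$, both are the quotient of $T(M)$ by the \emph{same} braided Hopf ideal, so they also share the $\Z^{|I|}$-grading by multidegree in the $x_i$ and — the adjoint action $\ad{x}{y}=x^{(1)}yS(x^{(2)})$ using only the braided bialgebra structure — the very same operator $\ad{-}{-}$. Hence every iterated ad-space $\adn m{V}{W}$ is literally the same subspace whether read in $\B(M)$ or in $\B(\tilde M)$. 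The only thing that changes is the bookkeeping of simple summands: by Lemma \nref{lm_SplitInert}, for a $\Sigma$-orbit $\tilde k$ the covering summand $\tilde M_{\tilde k}$ equals $\bigoplus_{i\in\tilde k}M_i$ as a subspace of $\tilde M=M$. Since $\ad{-}{-}$ is bilinear, unravelling the nested ad-spaces gives
$$\adn m{\tilde M_{\tilde k}}{\tilde M_{\tilde l}}
  =\sum_{a_1,\ldots,a_m\in\tilde k,\; b\in\tilde l}
   \ad{M_{a_1}}{\ad{M_{a_2}}{\cdots\ad{M_{a_m}}{M_b}}},$$
a sum of multihomogeneous pieces, so $\tilde C_{\tilde k,\tilde l}$ is the largest $m$ for which some summand on the right is nonzero. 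I would control this by two principles valid in $\B(M)$: (i) $\adn m{M_k}{M_l}=\{0\}$ whenever $m>C_{k,l}$ by Definition \nref{def_Cartan}, and in particular, by Theorem \nref{thm_Cartan}, generators at disconnected nodes braided-commute, whence $\ad{M_j}{v}=\{0\}$ for any multihomogeneous $v$ supported on nodes disconnected from $j$; (ii) when the sub-diagram of $\B(M)$ on the relevant summands is of finite Cartan type, the restricted Nichols algebra has a PBW basis indexed by the positive roots (see \cite{HLecture08}), so a multihomogeneous iterated ad-space vanishes as soon as its multidegree leaves that finite root system. Distinct multidegrees also give trivial intersection, which will settle the direct-sum claims.

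Cases (1) and (2) are then immediate. In the \textbf{disconnected} case every summand $\ad{M_k}{M_l}$ with $k\in\tilde k$, $l\in\tilde l$ already vanishes by (i), hence so does the whole $m=1$ sum and $\tilde C_{\tilde k,\tilde l}=0$. In the \textbf{inert} case $\tilde M_{\tilde k}=M_k$ and $\tilde M_{\tilde l}=M_l$ are unchanged as $G$-\ydms, so $\adn m{\tilde M_{\tilde k}}{\tilde M_{\tilde l}}=\adn m{M_k}{M_l}$ and $\tilde C_{\tilde k,\tilde l}=C_{k,l}$.

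For the \textbf{split edge} I would write $\tilde k=\{k_1,k_2\}$, $\tilde l=\{l_1,l_2\}$ with the two $A_2$-components on $\{k_1,l_1\}$ and $\{k_2,l_2\}$. At $m=1$ the only surviving terms are $N_1,N_2$, the cross terms dying by (i); $N_1+N_2$ is direct because the multidegrees $\alpha_{k_1}+\alpha_{l_1}$ and $\alpha_{k_2}+\alpha_{l_2}$ differ, which also identifies $\ad{\tilde M_{\tilde k}}{\tilde M_{\tilde l}}=N_1\oplus N_2$. At $m=2$ every term is some $\ad{M_{k_i}}{N_j}$ and vanishes: $\ad{M_{k_i}}{N_i}=\adn 2{M_{k_i}}{M_{l_i}}=\{0\}$ since $C_{k_i,l_i}=-1$, and $\ad{M_{k_2}}{N_1}=\ad{M_{k_1}}{N_2}=\{0\}$ by (i) since those nodes are disconnected from the support of the respective $N_j$. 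Hence $\tilde C_{\tilde k,\tilde l}=-1$, and as the configuration is symmetric in $\tilde k\leftrightarrow\tilde l$ also $\tilde C_{\tilde l,\tilde k}=-1$: type $A_2$. For the \textbf{ramified edge}, $\tilde k=\{k_1,k_2\}$ split, $\tilde l=\{l\}$ inert, with $M_{k_1},M_l,M_{k_2}$ an $A_3$-path, $l$ in the middle. In direction $\tilde k\to\tilde l$: $m=1$ gives $N_1\oplus N_2$; $m=2$ gives $\ad{M_{k_2}}{N_1}=\ad{M_{k_1}}{N_2}=N_{12}\neq\{0\}$ while $\ad{M_{k_i}}{N_i}=\adn 2{M_{k_i}}{M_l}=\{0\}$ because $C_{k_i,l}=-1$; $m=3$ gives the multidegrees $2\alpha_{k_1}+\alpha_l+\alpha_{k_2}$ and $\alpha_{k_1}+\alpha_l+2\alpha_{k_2}$, neither a root of $A_3$, so by (ii) it is $\{0\}$; hence $\tilde C_{\tilde k,\tilde l}=-2$. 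In direction $\tilde l\to\tilde k$: $m=1$ gives $\ad{M_l}{M_{k_1}}+\ad{M_l}{M_{k_2}}\neq\{0\}$ since $C_{l,k_i}=-1$, and $m=2$ gives $\adn 2{M_l}{M_{k_1}}+\adn 2{M_l}{M_{k_2}}=\{0\}$; hence $\tilde C_{\tilde l,\tilde k}=-1$, so the Cartan submatrix is $\left(\begin{smallmatrix}2&-2\\-1&2\end{smallmatrix}\right)$, a $B_2$ with $\tilde k$ the short root, and $\ad{\tilde M_{\tilde k}}{\tilde M_{\tilde l}}=N_1\oplus N_2$ as claimed.

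I expect the genuine obstacle to be not any individual computation but making principle (ii) airtight: bilinearity reduces all ad-spaces to multihomogeneous pieces built from the $M_i$, but one must then invoke precisely the structure theory of the finite-Cartan-type Nichols algebra $\B(M)$ — that its root spaces are exactly those indexed by the classical positive roots and that iterated braided commutators whose multidegree leaves the root system vanish — in order to kill the ``one power too many'' terms ($m=2$ in the split case, $m=3$ in the ramified case). Once that input is granted, each of the four cases collapses to a short finite check over at most the first three applications of $\ad{-}{-}$.
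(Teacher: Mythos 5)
Your proposal is correct and follows essentially the same route as the paper: identify $\B(\tilde M)$ with $\B(M)$ via Corollary \nref{cor_braidingIsomorphic}, expand $\adn{m}{\tilde M_{\tilde k}}{\tilde M_{\tilde l}}$ by bilinearity into the ad-spaces between the $\Gamma$-summands, and read off the maximal nonvanishing $m$ case by case. The only (cosmetic) difference is that the paper takes the vanishing of the ``one power too many'' terms directly from the hypothesis ``all other ad-spaces are trivial'' built into each case of the statement, so your principle (ii) about multidegrees leaving the root system, while a valid justification, is not actually needed as an extra input.
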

\vspace{-1.5cm}
\begin{proof}
  The assumed standard-Lie-type of $M$ in each case above translates
  into the knowledge of the resp. ad-spaces corresponding to the positive
  roots (see Definition  \nref{def_Cartan}). Because $\B(M)\cong \B(\tilde{M})$
  as algebra and braided vector space (see  Corollary
  \nref{cor_braidingIsomorphic}), we can hence directly calculate  case-by-case
  the respective ad-spaces of $\tilde{M}$ and hence the  nondiagonal Cartan
  matrix
  $$\tilde{C}_{\tilde{k}\tilde{l}}:=
  -\max_m\left(\adn{m}{\tilde{M}_i}{\tilde{M}_j}\neq \{0\}\right)$$
  Here the new simple summands $\tilde{M}_i$ of dimension $1$ or $2$ determined
  in Lemma \nref{lm_SplitInert} and the calculations take place in $\B(M)$.\\

  Note that in contrast to the proceeding e.g. in \cite{HV13} we do not
determine a Nichols  algebra over ad-spaces as Yetter-Drinfel'd modules in
$\B(\tilde{M})$, but  merely piece together the assumed ad-spaces in $\B(M)$.

  \begin{enumerate}
    \item In this case all ad-spaces are trivial:
      \begin{align*}
	\ad{\tilde{M}_i}{\tilde{M}_j}
	&=\ad{\bigoplus_{k\in\tilde{k}}M_{k}}
	{\bigoplus_{l\in\tilde{l}}M_{l}}\\
	&=\sum_{k,l} \ad{M_{k}}{M_{l}}\\
	&=\{0\}
      \end{align*}
    \item If both $\tilde{k}=\{k\}$ and $\tilde{l}=\{l\}$ are inert, then
    $M_{\tilde{l}},M_{\tilde{k}}$ are equal to $M_{l},M_k$ and, using again
    that $\B(M)=\B(\tilde{M})$, all ad-spaces coincide.
    \item We calculate the ad-spaces of $\tilde{M}$ over $G$
      from the assumed type $A_2\times A_2$ of $M$ over $\Gamma$, i.e. both
      $N_i$ have trivial ${\rm ad}(N_i)$:
      \begin{align*}
	\ad{\tilde{M}_{\tilde{k}}}{\tilde{M}_{\tilde{l}}}
	&=\ad{M_{k_1}\oplus M_{k_2}}{M_{l_1}\oplus M_{l_2}}\\
	&=\ad{M_{k_1}}{M_{l_1}}
	+\ad{M_{k_1}}{M_{l_2}}\\
	&\quad+\ad{M_{k_2}}{M_{l_1}}
	+\ad{M_{k_2}}{M_{l_2}}\\
	&=N_{1}\oplus N_{2}\\
	\adn{2}{\tilde{M}_{\tilde{k}}}{\tilde{M}_{\tilde{l}}}
	&=\ad{M_{k_1}\oplus M_{k_2}}{N_1\oplus N_2}\\
	&=\{0\}
      \end{align*}
    \item We calculate the ad-spaces of $\tilde{M}$ over $G$
      from the assumed type $A_3$ of $M$ over $\Gamma$, i.e. the
      $N_{12}$ below has trivial ${\rm ad}(N_{12})$:
      \begin{align*}
	\ad{\tilde{M}_{\tilde{k}}}{\tilde{M}_{\tilde{l}}}
	&=\ad{M_{k_1}\oplus M_{k_2}}{M_{l}}\\
	&=N_{1}\oplus N_{2}\\
	\adn{2}{\tilde{M}_{\tilde{k}}}
	{\tilde{M}_{\tilde{l}}}
	&=\ad{M_{k_1}\oplus M_{k_2}}{N_1\oplus N_2}\\
	&=N_{12}\\
	\adn{3}{\tilde{M}_{\tilde{k}}}{\tilde{M}_{\tilde{l}}}
	&=\ad{M_{k_1}\oplus M_{k_2}}{N_{12}}\\
	&=\{0\}\\
	\adn{2}{\tilde{M}_{\tilde{l}}}{\tilde{M}_{\tilde{k}}}
	&=\ad{M_{l}}{N_1\oplus N_2}\\
	&=\{0\}\\
      \end{align*}
  \end{enumerate}
\end{proof}

\newpage

\subsection{Example: Folding \texorpdfstring
{$A_2\times A_2$ to $A_2$ over the group $\D_4$}
{A2xA2 to A2 over the group D4}
}\label{sec_D4}

\begin{narrow}{-.7cm}{0cm}
\begin{tabular}{p{10cm}p{3cm}}      
In \cite{MS00} Milinski and Schneider gave examples of indecomposable Nichols
algebras over the non-abelian Coxeter groups
$G=\D_4,\S_3,\S_4,\S_5$. We want to show how the first case $G=\D_4$ may be
constructed as a covering Nichols algebra $\B(\tilde{M})$ of a certain
diagonal $\B(M)$ with $q$-diagram
$A_2\times A_2$ over
$\Gamma=\Z_2\times \Z_2$.\newline
$\quad$\newline
The example shall demonstrate the systematic approach in this article and
exhibits
already several crucial points of interest:

  \begin{itemize}
        \item The construction of an $M$ with a twisted symmetry of order $2$ is
      a
      model for the so-called ``unramified case'' of the covering construction
      in      Section
      \nref{sec_Unramified}: The diagram consists of disconnected
      copies of a $q$-diagram interchanged by the twisted symmetry.
\end{itemize}
&\hspace{-.1cm}
\raisebox{-.10\totalheight}{\qquad\imageplain{0.26}{DynkinA2A2A2}} 
\end{tabular}
\end{narrow}
\vspace{-.5cm}
\begin{itemize}
\item As
      described in \cite{MS00} p. 21, Gra\~na had remarked, that
      the Nichols algebra possesses a
      ``strange'' alternative basis that is non-homogeneous in the
      $G$-grading, but of precise type $A_2\times A_2$. This
      allowed Schneider and Milinski to more easily write down the relations.
      The covering
      construction precisely reproduces this basis as the formerly homogeneous
      basis of $M$ that is no longer homogeneous in $\tilde{M}$. This existence
      of a finer diagonal PBW-basis continues throughout this article (only
      in the unramified cases this finer PBW-basis type
      $X_n\times X_n$ as here, ramified cases are more involved).
    \item The Nichols algebra itself is non-faithful and even diagonal. On the
      other hand, there is a Doi twist of $\B(M)$, which is a faithful
      indecomposable Nichols algebra over $\D_4$. We will produce faithful Doi
      twists as well in Section \nref{sec_NondiagonalExamples}.
  \end{itemize}

\begin{example}
We start with a specific $4$-dimensional
diagonal \ydm over $\Gamma:=\Z_2^2=\langle v,w\rangle$ (for a systematic
construction see Section \nref{sec_Unramified})
$$M=\bigoplus_{i=0}^4\O_{\bar{g}_i}^{\chi_i}=\bigoplus_{i=0}^4 y_i\k$$
$$\bar{g}_1=\bar{g}_3=v\qquad \bar{g}_2=\bar{g}_4=w$$
$$\chi_1=\chi_4=(-1,-1)\qquad\chi_3=(-1,+1) \qquad \chi_2=(+1,-1)$$
where the tuples denote the character value on the generators:
$\chi=(\chi(v),\chi(w))$. According to \cite{Heck09}, this $M$ is of type
$A_2\times A_2$ and hence the Hilbert series of the Nichols algebra $\B(M)$ is
as follows (denoting $[n]_t:=\frac{t^n-1}{t-1}$ and esp. $[2]_t=1+t$) 
$$\H(t)=\left((1+t)(1+t)(1+t^2)\right)^2=[2]^4_t [2]^2_{t^2}
\qquad \dim\left(\B(M)\right)=\H(1)=2^6=64$$
The covering Nichols algebra $\B(\tilde{M})$ over the $\Z_2$-stem-extension
$G=\D_4$ of $\Gamma$ is of type $A_2$ with nodes of dimension $2$. But since
$\B(\tilde{M})\cong\B(M)$, it is still a diagonal, with now
non-homogeneous diagonal $\tilde{M}$-basis
$y_1,y_2,y_3,y_4$, has the same Hilbert series and dimension and a finer
PBW-basis of type $A_2\times A_2$.
\end{example}

More precisely, the construction proceeds step-by step as follows:\\

Let $a^4=b^2=1$ be the usual generators of $\D_4$. We choose a splitting
$s:\D_4\rightarrow \Gamma$ by sending the elements $1,v,w,vw$ to
$1,b,ab,a^3=bab$. The group-2-cocycle $\sigma\in Z^2(\Gamma,\Sigma^*)$ and
especially the evaluation on the generator $\theta\in\Z_2\cong \Sigma$ can hence
be calculated explicitly (rows, columns are labeled $1,v,w,vw$):
$$\sigma_\theta=\begin{pmatrix} 	
	1 & 1 & 1 & 1 	\\ 
	1 & 1 & 1 & 1 	\\ 
	1 & -1 & 1 & -1 \\ 
	1 & -1 & 1 & -1 \\ 
\end{pmatrix}$$
To apply the Construction Theorem \nref{thm_Construction} we
need an action of $\Sigma$ on $M$ by twisted symmetries with respect to
$\sigma$. Hence we first calculate from $\sigma$ the nontrivial
bimultiplicative form $\langle\rangle_\theta$ in Definition
\nref{def_TwistedSymmetryWithRespect}:
\begin{align*}
 \langle-,v\rangle_\theta&=\sigma_\theta(-,v)\sigma_\theta^{-1}(v,-)
=(+1,-1)\\
 \langle-,w\rangle_\theta&=\sigma_\theta(-,w)\sigma_\theta^{-1}(w,-)
=(-1,+1)
\end{align*}
This form immediately determines the twisted characters by Corollary
\nref{cor_twistedAction}:
\begin{align*}
  \chi_1^{\sigma_\theta}(-)&=(+1,-1)\chi_1(-)=\chi_3(-) \\
  \chi_3^{\sigma_\theta}(-)&=(+1,-1)\chi_3(-)=\chi_1(-) \\
  \chi_2^{\sigma_\theta}(-)&=(-1,+1)\chi_2(-)=\chi_4(-) \\
  \chi_4^{\sigma_\theta}(-)&=(-1,+1)\chi_4(-)=\chi_2(-) 
\end{align*}
Hence switching $y_1,y_3$ respectively $y_2,y_4$ is a twisted symmetry with
respect to the bimultiplicative form $\langle-,-\rangle_\theta$ (Definition
\nref{def_TwistedSymmetryWithRespect}). Moreover, taking this map as $f_\theta$
(and $f_1:=id$) defines an action of $\Sigma$ on $M$ by twisted symmetries with
respect to the cocycle $\sigma$ (Definition
\nref{def_TwistedSymmetryGroup}). The covering
construction (Theorem \nref{thm_Construction}) hence yields an indecomposable
Nichols algebra of dimension
$\dim\B(\tilde{M})=\dim\B(M)=64$ over $G=\D_4$. \\

To connect to the notation in \cite{MS00} we now also calculate the
$G$-homogeneous components, as they follow from the construction theorem as
$f_p$-eigenvectors to the
trivial eigenvalue $1^*\in\Sigma^*$ with $1^*(\theta)=1$ or the unique
nontrivial eigenvalue $\theta^*\in\Sigma^*$ with $\theta^*(\theta)=-1$:
\begin{align*}
x_1&:=y_1+y_3\in M_{v}^{[1^*]}=\tilde{M}_{b}\\
x_2&:=y_2+y_4\in M_{w}^{[1^*]}=\tilde{M}_{ab}\\
x_3&:=y_1-y_3\in M_{v}^{[\theta^*]}=\tilde{M}_{1,a^2b}\\
x_4&:=y_2-y_4\in M_{w}^{[\theta^*]}=\tilde{M}_{1,a^3b}
\end{align*}

\section{Symplectic Root Systems}
\subsection{Symplectic \texorpdfstring
{$\F_p$-Vector Spaces}
{Fp-Vector Spaces} And Stem Extensions
}\label{sec_SymplecticVector}
Suppose we are given a finite group $G$ with commutator subgroup $[G,G]=\Z_p$.
Such a group is clearly always a stem-extension of
its abelianization $\Gamma=G/[G,G]$. As usual e.g. for p-groups (see
e.g. \cite{Hup83}) we consider the \emph{commutator map $[,]$}, which is
skew-symmetric and isotropic:
$$G\times G\stackrel{[,]}{\longrightarrow} [G,G]=\Z_p$$
$$g,h\mapsto [g,h]=ghg^{-1}h^{-1}$$
$$[h,g]=[g,h]^{-1} \quad [g,g]=1$$
Because $[G,G]$ is central, the map is
multiplicative in both arguments:
\begin{align*}
  [g,h][g',h]
  &=(ghg^{-1}h^{-1})(g'hg'^{-1}h^{-1})\\
  &=g(g'hg'^{-1}h^{-1})hg^{-1}h^{-1}\\
  &=gg'hg'^{-1}g^{-1}h^{-1}\\
  &=[gg',h]
\end{align*}
and factors to $\langle,\rangle:\;\Gamma\times\Gamma\rightarrow \Z_p$. Because
of bimultiplicativity, $[g^p,h]=[g,h]^p=1$ holds and thus the commutator map
even factorizes one step further to $V:=\Gamma/\Gamma^p\cong \F_p^n$ 
$$V\times V \stackrel{\langle,\rangle}{\longrightarrow} \F_p\qquad
\text{{\bf denoted additively}}$$

\begin{remark}
  Note that by claim 2 in the proof of Theorem
  \nref{thm_Construction}, this  bimultiplicative form coincides with the form
  $$\langle\bar{g},\bar{h}\rangle_\theta:=\sigma(\bar{g},\bar{h})(\theta)
    \sigma^{-1}(\bar{h},\bar{g})(\theta)$$
  associated by Definition \nref{def_TwistedSymmetryGroup} to any 2-cocycle
  $\sigma$ representing the present stem-extension 
  $$\langle\theta\rangle\cong\Z_p=\Sigma\rightarrow G\rightarrow \Gamma$$
  Thus, the form $\langle,\rangle$ determines directly the notion of twisted
  symmetry in this situation.
\end{remark}

\begin{theorem}[Burnside Basis Theorem, \cite{Hup83} Thm.
3.15 p. 273f]\label{thm_Burnside}
  For $|G|=p^N$ every minimal generating set of $G$ corresponds to a
  $\F_p$-basis in the quotient $V:=G/(G^p[G,G])$, where $G^p[G,G]$ is the
  Frattini subgroup of $G$. Especially every minimal generating set consists
  precisely of $n=\dim_{\F_p}(V)$ elements. Note that for $p=2$ we have
  $V=G/(G^2[G,G])=G/G^2$.
\end{theorem}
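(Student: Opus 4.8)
The assertion is the classical Burnside Basis Theorem, so one may simply invoke \cite{Hup83}; for completeness I sketch the argument. The plan is to identify the subgroup $G^p[G,G]$ with the \emph{Frattini subgroup} $\Phi(G)$, the intersection of all maximal subgroups of $G$, and then reduce everything to linear algebra over $\F_p$.

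First I would recall the two standard facts about finite $p$-groups that drive the proof. Since a finite $p$-group is nilpotent, every proper subgroup is strictly contained in its normalizer; for a maximal subgroup $M$ this forces $M\trianglelefteq G$, and then $G/M$ is a nontrivial $p$-group without proper nontrivial subgroups, hence $G/M\cong\Z_p$. Consequently $G^p\subseteq M$ and, as $G/M$ is abelian, $[G,G]\subseteq M$; intersecting over all maximal $M$ gives $G^p[G,G]\subseteq\Phi(G)$. For the reverse inclusion, put $V:=G/(G^p[G,G])$, which is an elementary abelian $p$-group and hence an $\F_p$-vector space. Its maximal subgroups are exactly the linear hyperplanes, whose intersection is $\{0\}$; and because every maximal subgroup of $G$ already contains $G^p[G,G]$, the quotient map puts maximal subgroups of $G$ in bijection with hyperplanes of $V$. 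Therefore $\Phi(G)$ maps onto the intersection of all hyperplanes, namely $\{0\}$, so $\Phi(G)=G^p[G,G]$. For $p=2$ one observes additionally that $G/G^2$ has exponent $2$, hence is abelian, so $[G,G]\subseteq G^2$ and $G^2[G,G]=G^2$, which gives the final remark of the statement.

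It then remains to pass from $V$ to generating sets of $G$. The decisive point is that $\Phi(G)$ consists of non-generators: if $H=\langle x_1,\dots,x_k\rangle$ satisfies $H\,\Phi(G)=G$ but $H\neq G$, then $H$ lies in some maximal subgroup $M\supseteq\Phi(G)$, so $G=H\,\Phi(G)\subseteq M\subsetneq G$, a contradiction. Hence $x_1,\dots,x_k$ generate $G$ if and only if their images generate $V$. Since the minimal generating sets of the $\F_p$-vector space $V$ are precisely its bases, all of cardinality $n=\dim_{\F_p}V$, the same is true for the minimal generating sets of $G$.

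The proof is entirely routine once the two soft inputs are granted — normality of maximal subgroups of a $p$-group (nilpotency) and the non-generator characterization of the Frattini subgroup — so the only real obstacle is that these are exactly the facts one must not take for granted; everything else is elementary linear algebra over $\F_p$.
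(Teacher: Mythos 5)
Your proof is correct and complete. The paper itself offers no proof of this statement --- it is quoted as a classical result with a citation to Huppert --- and your argument is exactly the standard one: maximal subgroups of a $p$-group are normal of index $p$, whence $\Phi(G)=G^p[G,G]$, combined with the non-generator property of the Frattini subgroup to reduce minimal generating sets of $G$ to bases of the $\F_p$-vector space $V$. Nothing is missing; the handling of the $p=2$ remark via $[G,G]\subseteq G^2$ is also right.
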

%

In what follows, we shall consider $V=G/([G,G]G^p)$ as a \emph{symplectic
vector space} $\F_p^n$ with (possibly degenerate!) \emph{symplectic form}
$\langle v,w\rangle$. For a sub-vector space
$W\subset V$ we define the \emph{orthogonal complement}: 
$$W^\perp:=\{v\in V\;|\;\forall_{w\in W}\;\langle v,w \rangle=0\}$$
Especially $V^\perp=Z(G)/([G,G]G^p)$ is the \emph{nullspace}\index{Nullspace
$V^\perp$
(Symplectic)} of vectors orthogonal on all of $V$ (note that always $\langle
v,v\rangle=0$). For $V^\perp=\{0\}$ we call
$V$ \emph{nondegenerate}\index{Nondegenerate (Symplectic)}.

 It is well known (see e.g.
\cite{Hup83}) that there is always a \emph{symplectic basis}\index{Symplectic
Basis} $\{x_i,y_i,z_j\}_{i,j}$
consisting of mutually orthogonal {nullvectors} $z_j\in V^\perp$ and
\emph{symplectic base pairs} $\langle x_i,y_i\rangle=1$ generating a maximal
nondegenerate subspace. Note especially, that nondegenerate symplectic
vector spaces hence always have even dimension! They lead
for example to extraspecial groups $G=p_\pm^{\dim(V)+1}$\index{Extraspecial
group $p_\pm^{2n+1}$}, especially for $p=2$ and $\dim(V)=2$ to $G=\D_4,\Q_8$.

\subsection{Symplectic Root Systems Of Type \texorpdfstring{$ADE$}{ADE} Over
\texorpdfstring {$\F_2$}{F2}
}\label{sec_SymplecticRootsystems}
 
\begin{definition}\label{def_SymplecticRootsystem}
  Given a symplectic vector space $V$ over $\F_2$ and a graph $\mathcal{D}$, we
  define a \emph{symplectic root system}\index{Symplectic root system} for this
  graph as a decoration
  $\phi:Nodes(\mathcal{D})\rightarrow V$,
  such that $\Im(\phi)$ generates $V$ and nodes $i\neq j$ are connected
  iff $\langle \phi(i),\phi(j) \rangle=1_{\F_2}$ (note that always $\langle
  v,v\rangle=0$). If $\Im(\phi)$ is even a $\F_2$-basis of $V$, we call the
  symplectic root system \emph{minimal}.
\end{definition}

\begin{remark}
We will use the notion for one directly on simply-laced Dynkin diagrams
$\mathcal{D}$, but also as tool for the ramified case $\mathcal{D}'$, where only
a part of
the diagram $\mathcal{D}\subset\mathcal{D}'$ is split (such
as $\mathcal{D}=A_2,A_{n-1}$ for ramified $E_6\mapsto F_4$ and
$A_{2n-1}\mapsto C_n)$. Minimal symplectic root systems thereby correspond
to minimally indecomposable covering Nichols algebras.
\end{remark}

Note that in \cite{Len13b} we completely classified symplectic root
systems for arbitrary given graphs over the field $\F_2$. In \cite{Len13b} Cor.
5.9 we have proven that every graph admits a unique minimal symplectic root
systems. We have also given explicit descriptions, given below, of the
symplectic root systems for Cartan type Dynkin diagrams. However, that the
reader may directly verify that the following decorations indeed do form
symplectic root systems for all simply-laced Dynkin diagrams.

\begin{theorem}\label{thm_SymplecticRootsystem}
  The graph of a simply laced Dynkin diagram of rank $n$ 
  admits a minimal symplectic root system over the symplectic vector space $V$
  of dimension $n$ and typically minimal nullspace dimension $k=\dim(V^\perp)$: 
  \begin{itemize}
    \item $k=0$ for $n$ even, i.e. $V=\langle \{ x_i,y_i\}_i\rangle_\k$
    \item $k=1$ for $n$ odd, i.e. $V=\langle\{ x_i,y_i\}_i,z\rangle_\k$
    \item $k=2$ for type $D_n$ and $n$ even, i.e. 
      $V=\langle \{x_i,y_i\}_i,z_1,z_2\rangle_\k$.
  \end{itemize}
  \end{theorem}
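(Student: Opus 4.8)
The plan is to reduce the entire statement to an elementary $\F_2$-linear-algebra computation. First I would observe that, for a simply-laced diagram $\mathcal D$ on nodes $1,\dots,n$, a \emph{minimal} symplectic root system is nothing more than the following data: take $V=\F_2^{\,n}$ with standard basis $e_1,\dots,e_n$, put $\phi(i)=e_i$, and equip $V$ with the bilinear form whose Gram matrix in this basis is the adjacency matrix $A_{\mathcal D}$. Since $\mathcal D$ has no loops, $A_{\mathcal D}$ has zero diagonal, so over $\F_2$ this symmetric form is automatically alternating, hence symplectic, and by construction $\langle\phi(i),\phi(j)\rangle=1$ exactly when $i\ne j$ are adjacent. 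Thus a minimal symplectic root system always exists, $\dim V=n$, and
\[
  k=\dim V^\perp=n-\operatorname{rank}_{\F_2}A_{\mathcal D}.
\]
So the theorem is equivalent to computing $\operatorname{rank}_{\F_2}$ of the adjacency matrix of each $ADE$ diagram.

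Second, since $A_n$, $D_n$, $E_{6,7,8}$ are all trees, I would run the leaf-deletion recursion: if $\ell$ is a leaf of a forest $F$ with unique neighbour $v$, then the row of $A_F$ indexed by $\ell$ equals $e_v^{\,T}$; using it to clear column $v$ and its transpose to clear row $v$ splits off an invertible $2\times2$ block on $\{\ell,v\}$ and leaves the adjacency matrix of $F\setminus\{\ell,v\}$ completely unchanged, whence $\operatorname{rank}_{\F_2}A_F=2+\operatorname{rank}_{\F_2}A_{F\setminus\{\ell,v\}}$ (equivalently $\operatorname{rank}_{\F_2}A_F=2\nu(F)$ for the matching number $\nu$). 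Applying this at a terminal node gives, for $A_n$, $\operatorname{rank}=2+\operatorname{rank}(A_{n-2})$, and for $E_n$, deleting node $1$ and its neighbour reduces $\mathcal D$ to the path $A_{n-2}$; in both families one gets $\operatorname{rank}_{\F_2}A_{\mathcal D}=2\lfloor n/2\rfloor$ and hence $k=n\bmod 2$. For $D_n$, deleting one fork tip together with the branch node leaves the path $A_{n-3}$ plus an isolated node, so $\operatorname{rank}_{\F_2}A_{D_n}=2+2\lfloor (n-3)/2\rfloor$ and $k=n-2-2\lfloor (n-3)/2\rfloor$, which is $1$ for $n$ odd and $2$ for $n$ even. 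Substituting these three values recovers exactly the cases in the statement.

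To match the paper's phrasing ``the reader may directly verify,'' I would in addition exhibit explicit radical generators realising these dimensions: the alternating vector $e_1+e_3+\dots+e_n$ when $n$ is odd; and for $D_n$ the vector $e_{n-1}+e_n$ on the two fork tips (in all cases), together with $e_1+e_3+\dots+e_{n-3}+e_n$ when $n$ is even; one then completes each radical to a symplectic basis $\{x_i,y_i,z_j\}$ in the standard way. I do not expect a serious obstacle here: the only point requiring care is the $D_n$ bookkeeping — remembering that deleting the branch node isolates the second fork tip, and checking that the null vectors just written span \emph{all} of $V^\perp$ (equivalently, that the lower bound on $\operatorname{rank}_{\F_2}A_{D_n}$ from the induced subpath on $n-1$ nodes, whose rank is the already-settled $A_{n-1}$ value, matches the upper bound forced by those vectors). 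Everything else is the one-step deletion recursion above or a finite verification for $E_{6,7,8}$.
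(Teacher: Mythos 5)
Your argument is correct, and it takes a genuinely different route from the paper. The paper's proof is a bare exhibition: for each type $A_n,D_n,E_{6,7,8}$ it displays an explicit decoration $\phi$ written in a standard symplectic basis $\{x_i,y_i,z_j\}$ of a space with the asserted invariants, and leaves the check $\langle\phi(i),\phi(j)\rangle=1\Leftrightarrow i\sim j$ to the reader. You instead observe that a minimal symplectic root system for $\mathcal D$ is canonically the pair $\left(\F_2^{\,n},A_{\mathcal D}\right)$ with $\phi(i)=e_i$ (the zero diagonal makes the form alternating over $\F_2$), so existence is automatic and the whole content of the theorem collapses to $k=n-{\rm rank}_{\F_2}A_{\mathcal D}$; you then compute that rank uniformly by the leaf-deletion recursion, i.e. ${\rm rank}_{\F_2}A_F=2\nu(F)$ for forests. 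I checked the three case computations ($A_n$, the reduction of $E_n$ to $A_{n-2}$ by deleting the short-arm tip and its neighbour, and the $D_n$ reduction to $A_{n-3}$ plus an isolated fork tip) and they all give the stated values of $k$. Your approach buys more than the paper's: it shows the minimal symplectic root system is unique up to isometry (anticipating the result of \cite{Len13b} that the paper only quotes) and gives a closed formula valid for arbitrary forests. What it does not directly supply is the explicit basis expression of each $\phi(i)$ in terms of $x_i,y_i,z_j$, which the later constructions (e.g. Section \nref{sec_exampleA4} and Theorems \nref{thm_Unramified}--\nref{thm_RamifiedAB}) actually consume to write down group elements and characters; your closing step of completing the radical to a symplectic basis covers this in principle. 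One micro-caveat: your displayed radical generator $e_1+e_3+\cdots+e_n$ is the right one for the path $A_n$ but not verbatim for $E_7$ (which is not a path), so for $E_7$ one should either produce its own null vector or simply rely on the rank recursion, which already settles $k=1$ there.
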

  \begin{proof}
  Explicit symplectic root systems are given by the following decorations
  $\phi$. 
  One checks easily in every
  instance, that indeed $\langle \phi(i),\phi(j)\rangle=1$ iff $i,j$ are
  connected:  
  \begin{center}

\enlargethispage{2cm}

    \imageplain{0.26}{SymplecticRootA2n}
    \imageplain{0.26}{SymplecticRootA2n1}
    \imageplain{0.26}{SymplecticRootD2n1}
    \imageplain{0.26}{SymplecticRootD2n2z}
%
    \imageplain{0.26}{SymplecticRootE6}
%
    \imageplain{0.26}{SymplecticRootE7}
    \imageplain{0.26}{SymplecticRootE8}
  \end{center}


\end{proof}

\section{Main Constructions For Commutator Subgroup \texorpdfstring
{$\Z_2$}
{Z2}
}\label{sec_mainConstruction}
Suppose a nonabelian group $G$ with commutator subgroup
$[G,G]=\Z_2$, which is hence a stem-extension $\Sigma^*=\Z_2\rightarrow
G\rightarrow \Gamma$ of an {abelian} group $\Gamma$. Using Heckenberger's
classification \cite{Heck09} of finite-dimensional Nichols algebras $\B(M)$ over
abelian groups $\Gamma$ and symplectic root systems, we now construct
finite-dimensional minimally indecomposable covering Nichols algebras
$\B(\tilde{M})$ with connected Dynkin diagram, depending on 2-rank and
2-center of $G$. 

We need the following additional technical assumption to present the main
result, that gives us control over the size of even-order generating systems in
a group. Groups that fail this assumption can easily be treated if explicit
generating systems are at hand, see e.g. Corollary \nref{cor_Nonsaturated}.
Usually such a group still admits {\bf dis}connected covering Nichols algebras.

\begin{definition}\label{def_Saturated}
  A nilpotent group $G$ is \emph{$2$-saturated}, if for any prime $p$
  $$\dim_{\F_p}(G/[G,G]G^p)\leq \dim_{\F_2}(G/[G,G]G^2)
    \qquad\left(=\dim_{\F_2}(G/G^2)\right)$$
  Especially every $2$-group is $2$-saturated.
\end{definition}
\begin{lemma}\label{lm_Saturated}
  If a nilpotent group $G$ is $2$-saturated, then any $\F_2$-basis of the
  elementary abelian quotient $V=G/G^2$ can be lifted to a minimally generating
  set of $G$. Especially any minimal generating set of $G$ contains only
  elements of even order and has size $\dim_{\F_2}(G/G^2)$.
\end{lemma}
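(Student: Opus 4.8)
The plan is to work inside the finite nilpotent group $G$ and use the structure of the Frattini quotient together with the Burnside Basis Theorem \nref{thm_Burnside}, being careful that the latter is stated for $p$-groups while $G$ is only assumed nilpotent. First I would recall that a finite nilpotent group is the direct product of its Sylow subgroups, $G=\prod_p G_p$, and that consequently the Frattini quotient splits as $G/\Phi(G)\cong\prod_p G_p/\Phi(G_p)$, where each factor is the elementary abelian $\F_p$-vector space $G_p/(G_p^p[G_p,G_p])$. In particular $G/G^2$ is exactly the $p=2$ factor $G_2/\Phi(G_2)$, since for every odd prime $p$ the Sylow $p$-subgroup $G_p$ satisfies $G_p=G_p^2$ and hence contributes nothing to $G/G^2$. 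The hypothesis that $G$ is $2$-saturated says precisely that every other Frattini factor $G_p/\Phi(G_p)$ has $\F_p$-dimension $\le\dim_{\F_2}(G/G^2)=:n$.

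Next I would lift. Given an $\F_2$-basis $\bar{b}_1,\dots,\bar{b}_n$ of $V=G/G^2\cong G_2/\Phi(G_2)$, pick preimages $b_1^{(2)},\dots,b_n^{(2)}$ in $G_2$; by the Burnside Basis Theorem \nref{thm_Burnside} applied to the $2$-group $G_2$ these generate $G_2$ (any preimage of a basis of the Frattini quotient generates, since the Frattini subgroup consists of non-generators). For each odd prime $p$ dividing $|G|$, choose a minimal generating set of $G_p$; by Burnside again it has size $d_p=\dim_{\F_p}(G_p/\Phi(G_p))\le n$, and I may pad it out with copies of the identity (or better, with powers of an existing generator) to a list of length exactly $n$, say $b_1^{(p)},\dots,b_n^{(p)}$, which still generates $G_p$. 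Now set $b_i:=\prod_p b_i^{(p)}\in\prod_p G_p=G$ for $i=1,\dots,n$. Since $b_i$ projects to $b_i^{(p)}$ in each $G_p$, the subgroup $\langle b_1,\dots,b_n\rangle$ surjects onto every $G_p$; because the $G_p$ have pairwise coprime orders, a standard argument (the subgroup contains, for each $p$, the $p$-part of each $b_i$, hence contains a generating set of each $G_p$) shows $\langle b_1,\dots,b_n\rangle=G$. Thus the original $\F_2$-basis has been lifted to a generating set of size $n$.

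It remains to see this generating set is \emph{minimal} and that every minimal generating set has this size and consists of even-order elements. Minimality: any generating set of $G$ surjects onto $G/G^2\cong\F_2^n$ under the quotient map, so it has at least $n$ elements; hence our size-$n$ set is minimal, and indeed $n$ is a lower bound for the size of \emph{any} generating set. Conversely, any minimal generating set $\{g_1,\dots,g_m\}$ has $m\ge n$ from this bound; and I claim $m\le n$, hence $m=n$. For this, project each $g_j$ to $\bar{g}_j\in G/G^2=\F_2^n$; if $m>n$ these are $\F_2$-linearly dependent, so after reindexing $\bar{g}_m$ lies in the span of $\bar{g}_1,\dots,\bar{g}_{m-1}$, i.e. $g_m\in\langle g_1,\dots,g_{m-1}\rangle\cdot G^2$. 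Now $G^2$ is contained in the Frattini subgroup $\Phi(G)$ (for a finite nilpotent group $\Phi(G)=\prod_p\Phi(G_p)\supseteq\prod_p G_p^p\supseteq G^2$ — this is the step where $2$-saturation is not needed but nilpotency is), so $g_m\in\langle g_1,\dots,g_{m-1}\rangle\Phi(G)$, and since $\Phi(G)$ consists of non-generators this forces $g_m\in\langle g_1,\dots,g_{m-1}\rangle$, contradicting minimality. Hence $m=n$. Finally, each element $g$ of a minimal generating set must have nontrivial image in $G/G^2$ — otherwise $g\in G^2\subseteq\Phi(G)$ is a non-generator and can be dropped — so $g^2\ne 1$ in $G/G^2$ fails; more directly, $g\notin G^2$ means $g$ has even order, since if $g$ had odd order then $g=(g^2)^k$ for suitable $k$ would lie in $G^2$. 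This completes the proof.

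The main obstacle I anticipate is the bookkeeping between ``$2$-group'' and ``nilpotent group'': the Burnside Basis Theorem as quoted (\nref{thm_Burnside}) is literally about $p$-groups, so the argument must pass through the Sylow decomposition of the nilpotent group $G$ and the compatibility $\Phi(\prod_p G_p)=\prod_p\Phi(G_p)$, and it is exactly here that the $2$-saturation hypothesis earns its keep — it guarantees the odd-prime pieces are small enough to be absorbed into $n$ generators without forcing the total generator count above $\dim_{\F_2}(G/G^2)$.
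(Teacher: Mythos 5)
Your construction of the lift is correct and follows essentially the same route as the paper: decompose the nilpotent group into its Sylow subgroups $G=\prod_p G_p$, apply the Burnside Basis Theorem \nref{thm_Burnside} to each $G_p$, use $2$-saturation to pad the odd-prime generating sets out to length $n=\dim_{\F_2}(G/G^2)$, and take componentwise products; combined with the observation that any generating set surjects onto $G/G^2\cong\F_2^n$ and hence has at least $n$ elements, this produces a generating set of minimum cardinality $n$. This is exactly what the paper's (terser) proof does.

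However, the auxiliary claim you invoke twice --- that $G^2\subseteq\Phi(G)$ for finite nilpotent $G$ --- is false whenever $G$ has a nontrivial Sylow subgroup at an odd prime: for odd $p$ every element of $G_p$ is a square, so $G_p^2=G_p\not\subseteq\Phi(G_p)$; already $G=\Z_3$ has $G^2=G$ and $\Phi(G)=1$. Your chain $\Phi(G)=\prod_p\Phi(G_p)\supseteq\prod_p G_p^p\supseteq G^2$ breaks at the last inclusion, since $G^2=\prod_p G_p^2$ and $G_p^2=G_p$ for odd $p$. This is not cosmetic: under the ``no proper subset generates'' reading of ``minimal'', the assertion that every minimal generating set has size $n$ and consists of even-order elements is actually false --- in the $2$-saturated group $G=\Z_2^2\times\Z_3$ (where $n=2$) the set $\{(e_1,0),(e_2,0),(0,1)\}$ is irredundant of size $3$ and contains an element of odd order. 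So ``minimal'' must be read as ``of minimum cardinality'', and then your own lower bound $m\geq n$ together with the constructed size-$n$ set already yields $m=n$ with no Frattini argument needed; the even-order claim then follows because the $n$ images in $G/G^2\cong\F_2^n$ of a minimum generating set must be linearly independent, hence nonzero, so no generator lies in $G^2$, and your closing observation (odd order forces $g=(g^2)^k\in G^2$) finishes the proof. In short: delete the detour through $G^2\subseteq\Phi(G)$ and the argument is complete and matches the paper's.
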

\begin{proof}
  By \cite{Hup83} Thm 2.3 (p. 260f) every nilpotent group $G$ is a direct
  product  of $p$-groups $G_p$, i.e. groups of prime power. By the Burnside
  Basis Theorem \nref{thm_Burnside} a minimal generating set of any $G_p$
  corresponds to a $\F_p$-basis of the elementary abelian quotient
  $G_/p[G_p,G_p]G_p^p\cong\F_p^{N_p}$ with $N_p:=\dim_{\F_p}(G/[G,G]G^p)$. The
  group $G$ is 2-saturated, iff  $N_2$ is a maximal value of all $N_p$. Hence
  under this assumption any $\F_2$-basis of 
  $$V=G_2/([G_2,G_2]G_2^2)=G_2/G_2^2\cong G/G^2$$
  hence of rank $N_2$, allows for the choice of generating set in the
  other  $G_p$ as well. The respective direct products form a generating set in
  $G$. Omitting one element would omitt an element in the $\F_2$-basis, which
  can hence not be generating any more. Thus the constructed generating set is
  minimal.
\end{proof}

In Section \nref{sec_NondiagonalExamples} we will furthermore
give nondiagonal (and especially some faithful!) Doi twists of the diagonal
Nichols algebras constructed below. These examples of rank $\leq 4$ over
various groups of order $16$ and $32$ are then the first indecomposable faithful
Nichols algebras over nonabelian groups of rank $>2$.\\

\begin{theorem}\label{thm_AllZpOrbifolds}
  For any group $G$ with $[G,G]\cong\Z_2$ consider the
  invariants
  \begin{center}
  \begin{tabular}{lll}
   $V$ & $:=G/G^2\;\cong\Gamma/\Gamma^2$ & 
    $\qquad\dim_{\F_2}(V)=:$ 2-rank\\
   $V^\perp$ & $=Z(G)/G^2$ & 
    $\qquad\dim_{\F_2}(V^\perp)=:$ 2-center\\
  \end{tabular}
  \end{center}
  and denote $(n\mod 2)\in\{0,1\}$. Assume $G$ to be
  $2$-saturated, then $G$ admits a finite-dimensional
  minimally indecomposable Nichols algebra $\B(\tilde{M})$ with the following
  connected Dynkin diagram, depending on 2-rank and 2-center of $G$. They
  are covering Nichols algebras of some $\B(M)$ over $\Gamma$ constructed
  below:
  \begin{itemize} 
      \item {\bf Unramified} (generic) simply-laced components from a
	disconnected double with a symplectic root system for $V$:
      \setlength{\tabcolsep}{12pt}
      \begin{narrow}{1.0cm}{0cm}
      \begin{tabular}{cr|cc}
	$dim_{\F_2}(V)$ & $dim_{\F_2}(V^\perp)$
	  & $M$ & $\tilde{M}$\\
	\hline
	$n$ & $n \mod 2$ 
	  & $A_{n}\times A_{n}$ & $A_{n\geq 2}$\\
	$n$ &  $n\mod 2$ 
	  & $E_n\times E_n$ & $E_{n=6,7,8}$ \\
	$n$ & $2-(n \mod 2)$ 
	  & $D_{n}\times D_{n}$ & $D_{n\geq 4}$\\
      \end{tabular}
      \end{narrow}
       \setlength{\tabcolsep}{6pt}
    \item {\bf Ramified} components from a single diagram with
      an order 2 automorphism and a symplectic root system decomposed as
      $V:=V_{inert}\oplus^\perp V_{split}$:
      \setlength{\tabcolsep}{12pt}
      \begin{narrow}{-0cm}{0cm}
      \begin{tabular}{rlrl|cc}
	\multicolumn{2}{c}{$dim_{\F_2}(V)$}
	& $dim_{\F_2}(V^\perp)$
	& $M$ & $\tilde{M}$\\
	\hline
	$2+2$ & $=4$ & $2$ 
	  & $E_6$ & $F_4$\\
	$1+(n-1)$ & $=n$ & $2-(n\mod 2)$
	  & $A_{2n-1}$ & $C_{n\geq 3}$\\
      \end{tabular}\\
      \end{narrow}
      \setlength{\tabcolsep}{6pt}
  \end{itemize}
  \end{theorem}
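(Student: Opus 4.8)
The plan is to treat each row of the two tables separately, in every case following the same scheme. First I would fix the ambient data: put $\Gamma:=G/[G,G]$, $\Sigma:=\Z_2=\langle\theta\rangle$, choose a normalized section $s:\Gamma\to G$ and the representing cocycle $\sigma\in Z^2(\Gamma,\Sigma^*)$ as in Section \ref{sec_Construction}. By Claim 2 in the proof of Theorem \ref{thm_Construction}, the induced form $\langle\bar{g},\bar{h}\rangle_\theta=\sigma(\bar{g},\bar{h})(\theta)\sigma^{-1}(\bar{h},\bar{g})(\theta)$ is the commutator form of $G$; it factors through $V=G/G^2\cong\Gamma/\Gamma^2$ and has $\dim_{\F_2}V=n$ (the $2$-rank) and $\dim_{\F_2}V^\perp=k$ (the $2$-center). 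Since over $\F_2$ a symplectic space is determined up to isometry by the pair $(\dim V,\dim V^\perp)$, the hypotheses on $2$-rank and $2$-center say exactly that $(V,\langle,\rangle_\theta)$ is isometric to the symplectic space carrying the minimal symplectic root system for the relevant diagram, supplied by Theorem \ref{thm_SymplecticRootsystem}.

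Next I would build the diagonal \ydm $M$ over $\Gamma$. In the unramified rows ($X_n\in\{A_n,D_n,E_n\}$) I would take the minimal symplectic root system $\phi$ for $X_n$ on $V$, lift the values $\phi(i)$ to elements $\bar{g}_i\in\Gamma$, and choose $\pm1$-valued characters $\chi_i$ --- this is where the detailed bookkeeping of Section \ref{sec_Unramified} enters --- so that $N:=\bigoplus_i\O_{\bar{g}_i}^{\chi_i}$ has $\B(N)$ of the $\pm1$ Cartan type $X_n$, finite-dimensional of dimension $2^{|\Phi^+(X_n)|}$ by \cite{Heck09}. Setting $N_\sigma:=\bigoplus_i\O_{\bar{g}_i}^{\chi_i^{\sigma_\theta}}$ with the twisted characters and $M:=N\oplus N_\sigma$, Corollary \ref{cor_diagramAutomosphisms} makes the flip $N\leftrightarrow N_\sigma$ an action of $\Sigma$ on $M$ by twisted permutation symmetries with respect to $\sigma$, and the defining property of a symplectic root system ($\langle\phi(i),\phi(j)\rangle=1$ iff $i\sim j$) is precisely what forces every monodromy between a node of $N$ and a node of $N_\sigma$ to be $+1$; hence the $q$-diagram of $M$ is the disjoint union $X_n\times X_n$ and $\B(M)\cong\B(N)\otimes\B(N_\sigma)$ is finite-dimensional of dimension $2^{|\Phi^+(X_n\times X_n)|}$. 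In the ramified rows $M$ would instead be a single \ydm of type $E_6$, resp.\ $A_{2n-1}$: one decorates the split nodes via the symplectic root system of the split sub-diagram ($A_2\times A_2$, resp.\ $A_{n-1}\times A_{n-1}$) on the nondegenerate summand $V_{split}$ of $V$, and continues ad hoc by two, resp.\ one, inert nodes carrying central gradings in $V_{inert}=V^\perp$ and suitable $\pm1$-characters to complete the Dynkin diagram, so that $\dim\B(M)=2^{|\Phi^+(E_6)|}$, resp.\ $2^{|\Phi^+(A_{2n-1})|}$ by \cite{Heck09}, while the order-$2$ diagram automorphism of $X_n$ (fixing the inert nodes, transposing the split pairs) is an action of $\Sigma$ by twisted permutation symmetries with respect to $\sigma$, using the symplectic-root-system property on $V_{split}$ and the centrality of the inert gradings.

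In either case the remainder is bookkeeping. Applying the Construction Theorem \ref{thm_Construction} to $M$ with the above $\Sigma$-action yields the covering $G$-\ydm $\tilde{M}$, which is indecomposable because the extension is a stem extension, and which satisfies $\B(\tilde{M})\cong\B(M)$ as $\Z$-graded algebras by Corollary \ref{cor_braidingIsomorphic}, hence has the finite dimension listed. By Lemma \ref{lm_SplitInert} the split pairs fuse into $2$-dimensional simple $G$-\ydms while the inert nodes survive, and by Theorem \ref{thm_Fold} the edges fold as $A_2\times A_2\mapsto A_2$ on split edges and $A_3\mapsto B_2$ on ramified edges; running through all nodes and edges this turns $X_n\times X_n$ into the connected diagram $X_n$ in the unramified rows, and $E_6$ into $F_4$, $A_{2n-1}$ into $C_n$ in the ramified rows. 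Finally the symplectic root system used was minimal, so its image is an $\F_2$-basis of $V$ (of $V_{split}$ in the ramified case); since $G$ is $2$-saturated, Lemma \ref{lm_Saturated} lifts this basis to a minimal generating set of $G$, which is exactly the support of $\tilde{M}$, so $\tilde{M}$ has no proper indecomposable sub-\ydm and $\B(\tilde{M})$ is minimally indecomposable.

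I expect the real work to lie in the construction of $M$: for each row one must simultaneously realize a finite-dimensional diagonal Nichols algebra over $\Gamma$ with the prescribed $q$-diagram and arrange that it carries the required twisted permutation symmetry relative to $\sigma$. The unramified case is dictated entirely by the symplectic root system and is comparatively routine; the genuine obstacle is the ramified case, where the ad-hoc continuation by inert nodes must be verified to complete exactly $E_6$ (resp.\ $A_{2n-1}$) without destroying finite-dimensionality or $\sigma$-compatibility --- this is the substance of Sections \ref{sec_RamifiedEF} and \ref{sec_RamifiedAB}.
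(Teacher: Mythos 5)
Your proposal is correct and follows essentially the same route as the paper: the paper's proof of this theorem is itself a roadmap that delegates the unramified cases to Theorem \ref{thm_Unramified} and the ramified cases to Theorems \ref{thm_RamifiedEF} and \ref{thm_RamifiedAB}, in each case pairing the invariants $(\dim V,\dim V^\perp)$ with the minimal symplectic root system of Theorem \ref{thm_SymplecticRootsystem}, lifting via Lemma \ref{lm_Saturated}, and then applying Theorem \ref{thm_Construction} together with Lemma \ref{lm_SplitInert} and Theorem \ref{thm_Fold}. You also correctly locate the genuine content in the construction of $M$ (the $\pm1$ character bookkeeping and, in the ramified cases, the inert-node continuation), which is exactly where the paper spends its effort in Sections \ref{sec_Unramified}--\ref{sec_RamifiedAB}.
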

  \begin{proof}
    The theorem states a list of constructions, provided that $G$ is of the
    assumed form with specific invariants $dim_{\F_2}(V)$ and
    $dim_{\F_2}(V^\perp)$. These construction are then carried out in the
    subsequent sections:
    \begin{itemize}
      \item The unramified cases in Theorem \nref{thm_Unramified}.
      \item The ramified case $E_6\mapsto F_4$ in Theorem \nref{thm_RamifiedEF}.
      \item The ramified case $A_{2n-1}\mapsto C_n$ in Theorem
	\nref{thm_RamifiedAB}.
    \end{itemize}	
    In each of the quoted theorems, the assumed invariants of $G$ characterize
    the type (dimension, nullspace) of the symplectic vector space $G/G^2$ as
    described in Section \nref{sec_SymplecticVector}. We then invoke Theorem
    \nref{thm_SymplecticRootsystem}, which returns  a minimal
    symplectic root system precisely for the distinct choice of invariants
    assumed in the statement. This basis of $V=G/G^2$ is then lifted using
    Lemma \nref{lm_Saturated} using the additional assumption of $G$ to be
    saturated. This yields a set of conjugacy classes, which we complete to a
    Yetter-Drinfeld modules $M$ with twisted symmetry as prescribed
    by an ad-hoc choice of realizing characters. This finally stages the
    application of the covering construction Theorem \nref{thm_Construction} to
    yield $\B(\tilde{M})$.
\end{proof}
 
\begin{remark}
  For further link-{\bf de}composable Nichols algebras see
\cite{Len12}  Sec.  6.6:
  \begin{itemize}
    \item unramified $A_1\times A_1\mapsto A_1$
    \item ramified $A_3\mapsto B_2$, $D_{n+1}\mapsto B_n$ and
      several alike non-Cartan diagrams.
    \item an isolated loop diagrams $A_2\mapsto A_1,\;q\in \k_3$. 
    \item ramified $D_4\rightarrow G_2$ which is the {only}
      covering with $\Sigma\cong \Z_3$. 
  \end{itemize} 
\end{remark}

Before we proceed to the proof, we comment on possible outlooks to groups of
higher nilpotency class:

\begin{remark}\label{rem_class3}
  Note that non-minimally indecomposable covering Nichols algebras over $G$
  might be interesting as well, especially because in this case there might
  exist a secondary covering Nichols algebra $\B(\tilde{\tilde{M}})$ over a
  group $\tilde{G}$ of nilpotency class $3$, that use a twisted symmetry of the
  primary covering Nichols algebra $\B(\tilde{M})$ over $G$.\\

  In \cite{Len13b} we have found non-minimal symplectic root systems as well,
  especially of type $D_{n+1}$. As a conjecture, this might give rise to
  covering Nichols algebras 
  $$D_{n+1}\times D_{n+1} \mapsto D_{n+1} \mapsto B_{n}$$
  over a group $\tilde{G}$ of nilpotency class $3$. In this
  case, $\tilde{\tilde{M}}$ would be the sum of a unique simple \ydm of
  dimension $4$ and $n-1$ simple \ydms of dimension $2$. Such a Nichols algebra
  $\B(\tilde{\tilde{M}})$ would exhibit a $B_n$-Dynkin diagram, while the
  Hilbert series were the square of a Hilbert series of a Nichols
  algebra with diagram $D_{n+1}$ over an abelian group $\Gamma$.\\

  During the review of this article, such a Nichols algebra for $n=2$
  has indeed be discovered in the classification of rank 2 in \cite{HV13}.
  However, the assumed Hilbert series
  $\H(t)=\left([2]_t^3[2]_{t^2}^2[2]_{t^3}\right)^2$
  turns out to be just a large divisor of the found Hilbert series, which
  indicates an additional extension. We hope nevertheless that the present
  approach can help to further understand the structure of the newly discovered
  Nichols algebra, such as providing a PBW-basis.
\end{remark}

\subsection{Unramified Cases \texorpdfstring
{$ADE\times ADE\mapsto ADE$}
{ADE x ADE -> ADE}
}\label{sec_Unramified}

The most natural and generic way to construct a \ydm with twisted symmetry
$\Z_2$ has already been demonstrated on the case $\D_4$
in Section \nref{sec_D4}; we take $\Gamma$-\ydms $N$ as well as $N_\sigma$
with modified $\Gamma$-action (Remark \nref{rem_twistedAction}) and force
twisted symmetry by considering $M:=N\oplus N_\sigma$. The symplectic root
system will assures that the diagram of $M$ consists indeed of disconnected
identical subdiagrams for $N,N_\sigma$ (while $N\not\cong N_\sigma$). We will
subsequently calculate an explicit example for $A_4\times A_4\mapsto A_4$ in
Section \nref{sec_exampleA4}.\\

The following image illustrates the covering of type $D_n\times D_n\mapsto D_n$:

\image{0.26}{DynkinDnDn}

\begin{theorem}\label{thm_Unramified}
  Suppose a simply-laced Dynkin diagram $X_n$ of rank $n$ and $G$ an arbitrary
  $2$-saturated group (Def \nref{def_Saturated}) with $[G,G]=\Z_2$ and
  $\Gamma:=G/[G,G]$, such that 
  \begin{itemize}
    \item $dim_{\F_2}(V)=dim_{\F_2}(G/G^2)\stackrel{!}{=}n\geq 2$
    \item
      $dim_{\F_2}(V^\perp)=dim_{\F_2}(Z(G)/G^2)\stackrel{!}{=}
      \begin{cases}
       2,\text{ for }X_n=D_{2m} \\ n\mod 2,\text{ else }
      \end{cases}$
  \end{itemize}
  Then there exists a $\Gamma$-\ydm $M=N\oplus N_\sigma$ with $N,N_\sigma$
  disconnected in $M$ and an twisted permutation action of $\Sigma\cong\Z_2$ 
  interchanging
  $N\leftrightarrow N_\sigma$. The covering \ydm over $G$ is hence
  $\tilde{M}=\bigoplus_{i=1}^n\tilde{M}_i$ of dimension $2n$ with: 
  \begin{itemize}
   \item $[G,G]$ acts trivially on $\tilde{M}$, which is
	 diagonal, but $V$ acts faithfully.
   \item $\tilde{M}$ is minimally indecomposable.
   \item $\B(\tilde{M})$ is finite-dimensional, with Hilbert series the
	  square of the Hilbert series single diagram in the
	 diagonal case over $\Z_2^n$, especially 
	$$\dim\left(\B(\tilde{M})\right)=\H(1)=2^{|\Phi^+(X_n\times X_n)|}$$
   \item $\tilde{M}$ has the prescribed Cartan matrix and Dynkin diagram
	  $X_n$ with
	  all nodes $\tilde{M}_i$ of dimension $2$ (i.e. underlying conjugacy
	  class of length 2).
  \end{itemize}
  An explicit example of type $A_4$ will be discussed in the subsequent
  subsection \nref{sec_exampleA4}. Several faithful Doi twist and hence
  nondiagonal Nichols algebras for small
  rank $D_4,A_2,A_3$ over nonabelian $G$ are given in Section
  \nref{sec_NondiagonalExamples}.
\end{theorem}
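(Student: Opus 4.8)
The plan is to construct the Yetter-Drinfel'd module $M=N\oplus N_\sigma$ explicitly from the minimal symplectic root system guaranteed by Theorem \nref{thm_SymplecticRootsystem}, and then verify that the covering construction Theorem \nref{thm_Construction} applies and produces the asserted data. First I would fix a minimal generating set $\{g_1,\dots,g_n\}$ of $G$ lifting an $\F_2$-basis of $V=G/G^2$, which exists by Lemma \nref{lm_Saturated} since $G$ is $2$-saturated; the hypotheses on $\dim_{\F_2}(V)=n$ and $\dim_{\F_2}(V^\perp)$ match exactly the type and nullspace dimension of the symplectic vector space $V$ for which Theorem \nref{thm_SymplecticRootsystem} yields a \emph{minimal} symplectic root system $\phi\colon Nodes(X_n)\to V$. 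Thus each node $i$ of $X_n$ gets a $\Gamma$-grading $\bar g_i:=\phi(i)$ (viewed in $\Gamma$ via the lift), and by the symplectic root system property, $\langle\bar g_i,\bar g_j\rangle=1$ iff $i,j$ are connected in $X_n$ iff (by Claim 2 in the proof of Theorem \nref{thm_Construction}) the lifts $s(\bar g_i),s(\bar g_j)$ fail to commute in $G$.

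Next I would choose characters realizing the braiding. For $X_n$ simply laced with all $q_{\alpha\alpha}=-1$, Heckenberger's classification \cite{Heck09} provides a diagonal braiding of type $X_n$ over $\Z_2^n$ with all entries $\pm1$; pulling this back along $\Gamma\twoheadrightarrow\Gamma/\Gamma^2\cong\F_2^n$ gives characters $\chi_i$ on $\Gamma$ and a $\Gamma$-\ydm $N$ of type $X_n$. I then take $N_\sigma$ to be $N$ with $\Gamma$-action twisted by the $2$-cocycle $\sigma$ representing the stem extension, as in Remark \nref{rem_twistedAction}; equivalently $\chi_i^{\sigma}(\bar g)=\langle\bar g,\bar g_i\rangle\,\chi_i(\bar g)$. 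Set $M:=N\oplus N_\sigma$. The crucial diagram computation is that $N$ and $N_\sigma$ are \emph{disconnected} in $M$: for a node $i$ in $N$ and $j$ in $N_\sigma$ the relevant monodromy is $q_{ij}q_{ji}\cdot\langle\bar g_i,\bar g_j\rangle^{2}=q_{ij}q_{ji}$ because the form takes values $\pm1$ and squares to $1$; combined with the fact that the original $q_{ij}q_{ji}$ between a node and its $\sigma$-twin lies in $\{\pm1\}$ and is forced to be $+1$ precisely by the symplectic root system condition, one gets $c_{NN_\sigma}c_{N_\sigma N}=\mathrm{id}$. Hence $\B(M)$ is finite dimensional with Hilbert series the square of that of $\B(N)$, giving $\dim\B(M)=2^{|\Phi^+(X_n\times X_n)|}$. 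Swapping the two copies node by node is then by construction a twisted permutation symmetry of $\Sigma=\Z_2$ with respect to $\sigma$ (this is exactly the check made for $\D_4$ in Section \nref{sec_D4}).

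With $M$ and the $\Sigma$-action in hand, I apply Theorem \nref{thm_Construction} to obtain the covering $G$-\ydm $\tilde M$. The simple summands are computed by Lemma \nref{lm_SplitInert}: since each node pair $\{i\in N,\ i\in N_\sigma\}$ is a genuine $2$-orbit (the two summands are non-isomorphic because the $\sigma$-twist is nontrivial, which by the symplectic root system forces $\bar g_i\notin V^\perp$ — every basis node has a connected neighbour), every node of $X_n$ becomes a single \emph{split} $G$-node $\tilde M_i$ of dimension $2$, so $\dim\tilde M=2n$. Indecomposability is minimality: the cover of an indecomposable $M$ over a stem extension is indecomposable by the Corollary following Theorem \nref{thm_Construction}, and minimality follows because the underlying symplectic root system is minimal, so no proper sub-\ydm generates $V$. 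Non-faithfulness of $[G,G]$ (it acts trivially) and faithfulness of $V$ are immediate from the construction of the $G$-action as a pullback. Finally, the Dynkin diagram of $\tilde M$ is computed edge by edge via Theorem \nref{thm_Fold}: between two split nodes whose $4$ underlying $\Gamma$-nodes form $A_2\times A_2$ (one $A_2$ in $N$, one in $N_\sigma$), case (3) of that theorem gives a Cartan entry of type $A_2$; disconnected pairs give case (1); stringing these together reproduces exactly the diagram $X_n$. The isomorphism $\B(\tilde M)\cong\B(M)$ as $\Z$-graded algebras (Corollary \nref{cor_braidingIsomorphic}) transports the Hilbert series and dimension.

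The main obstacle I anticipate is the disconnectedness claim $c_{NN_\sigma}c_{N_\sigma N}=\mathrm{id}$ — i.e. verifying that the symplectic root system condition $\langle\phi(i),\phi(j)\rangle=1$ iff $i\sim j$ is \emph{precisely} what is needed to kill all cross-monodromies between $N$ and $N_\sigma$, including the diagonal $i=j$ case, and simultaneously that the resulting cross-Cartan entries vanish (one needs $\mathrm{ad}(x_k)(x_l)=0$ across, not just trivial monodromy, which uses the $q_{\alpha\alpha}=-1$ normalization via Theorem \nref{thm_Cartan}). The case split $X_n=D_{2m}$ versus the rest enters exactly here through the nullspace dimension $\dim_{\F_2}(V^\perp)$ in Theorem \nref{thm_SymplecticRootsystem}, so one must be careful that the realizing characters $\chi_i$ can be chosen compatibly with the prescribed center size; this is the ad-hoc character choice alluded to in the proof of Theorem \nref{thm_AllZpOrbifolds}, and making it canonical (or at least checking it is always possible for the given invariants) is the delicate bookkeeping step.
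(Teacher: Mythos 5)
Your overall strategy is the same as the paper's (symplectic root system $\to$ lifted generating set $\to$ $N$ with $\pm1$ braiding $\to$ $N_\sigma$ by $\sigma$-twisting $\to$ disconnectedness $\to$ covering construction and folding), but the one computation that carries the whole theorem is wrong as you have written it. The cross-monodromy between $N_i=\O_{\bar g_i}^{\chi_i}$ and $N_{\sigma,j}=\O_{\bar g_j}^{\chi_j^{\sigma}}$ is
$\chi_j^{\sigma}(\bar g_i)\,\chi_i(\bar g_j)
=\langle\bar g_i,\bar g_j\rangle\, q_{ij}q_{ji}$,
with the symplectic form entering to the \emph{first} power: only one of the two braiding factors involves the twisted character $\chi_j^\sigma$, while the other, $\chi_i(\bar g_j)$, is untouched (the grading is not twisted, only the action). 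Your claim that the form enters as $\langle\bar g_i,\bar g_j\rangle^{2}=1$ would make the cross-monodromy equal to the original $q_{ij}q_{ji}$, which is $-1$ for every edge $i\sim j$ of $X_n$; then $N$ and $N_\sigma$ would be heavily connected, $\B(M)$ would not be of type $X_n\times X_n$, and the finite-dimensionality argument collapses. Your attempted patch --- that ``the original $q_{ij}q_{ji}$ \ldots is forced to be $+1$ precisely by the symplectic root system condition'' --- is false for connected $i\sim j$ (that monodromy being $-1$ is the definition of the edge) and is only true in the diagonal case $i=j$, where isotropy $\langle v,v\rangle=0$ and $q_{ii}^2=1$ do the job. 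The correct mechanism, and the entire point of the symplectic root system, is the linear appearance of the form: for $i\sim j$ one has $q_{ij}q_{ji}=-1$ \emph{and} $\langle\phi(i),\phi(j)\rangle=1_{\F_2}$ (evaluating to $-1$ in $\k^\times$), so the product is $+1$; for $i\not\sim j$ both factors are $+1$. Since all entries are $\pm1$ and the cross-monodromies are trivial, Theorem \nref{thm_Cartan} gives vanishing cross-Cartan entries, which is the $\mathrm{ad}=0$ statement you correctly flag as needed.

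The remaining steps of your proposal (splitness of every node because each node of a connected $X_n$, $n\ge2$, has a neighbour and hence $\phi(i)\notin V^\perp$; folding via case (3) and case (1) of Theorem \nref{thm_Fold}; transport of the Hilbert series via Corollary \nref{cor_braidingIsomorphic}) agree with the paper and are fine. One smaller imprecision: ``pulling back Heckenberger's braiding along $\Gamma\twoheadrightarrow\Gamma/\Gamma^2$'' does not by itself place the summands on the prescribed grading elements $\phi(i)$; the paper instead solves explicitly for characters $\chi_i$ with $\chi_i(\bar g_j)=-1$ iff $i=j$ or $i<j$ connected, which is possible because the $\phi(i)$ form a basis of $V$, and this triangular choice is also what yields faithfulness of the $V$-action --- a point your proposal asserts but does not actually verify.
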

\begin{proof}The strategy has been outlined above:\\

  {\bf Step 1:} We first construct a $\Gamma$-\ydm $N:=\bigoplus_{i=1}^n
\O_{\bar{g}_i}^{\chi_i}$,
  such that 
  \begin{itemize}
    \item $N$ is minimally indecomposable
    \item The braiding matrix only contains $\pm1$
    \item The quotient $V$ acts faithfully
    \item Nodes $i,j$ are connected iff $\langle\bar{g}_i,\bar{g}_j\rangle \neq
      0$ (i.e. any lifts $g_i,g_j\in G$ discommute)
    \item The Nichols algebra is finite-dimensional and has the prescribed
    Dynkin diagram
  \end{itemize}

  This is done by using precisely the symplectic root systems constructed in
  Sections \nref{sec_SymplecticVector}-\nref{sec_SymplecticRootsystems}:
  $V:=G/G^2$ is a symplectic
  vector space as described in the cited Section with dimension 
  $dim_{\F_2}(G/G^2)$ and nullspace dimension 
  $dim_{\F_2}(Z(G)/G^2)$. Hence the assumptions of the
  present theorem
  exactly match those of {\it cit. loc.} and we get a symplectic root system
  basis $\phi(i)$ ($1\leq i\leq n$) of $V$,
  i.e. $\langle\phi(i),\phi(j)\rangle\neq 0$ iff $i,j$ are connected.
  Because $G$ was assumed $2$-saturated, this basis of $V$ can be lifted to a
  minimally generating set $g_i$ of $G$.

  We define an indecomposable \ydm $N$ by using the images of the
  minimally generating set $\bar{g}_i$ of $\Gamma=G/[G,G]$ (=coaction). 
  Then we construct suitable characters
  $\chi_i:\;\Gamma\rightarrow \k^\times$ (=action) that realize the given
  diagram with braiding matrix $\pm1$. Because
  the $\phi(i)$ were a basis of $\Gamma/\Gamma^2$, there is exactly one
  $\chi_i$ such that $\chi_i(\bar{g_j})=-1$ if $i=j$ or $i<j$ are connected and
  $+1$ otherwise. 
  Then $N:=\bigoplus_i \O_{\bar{g}_i}^{\chi_i}$ has by construction a braiding
matrix with
  monodromy $q_{ij}q_{ji}\neq 1$ precisely iff lifts $g_i,g_j$ discommute
  in $G$.  Note by construction, as $\F_2$-matrix $\chi_1,\ldots \chi_n$ is
  triangular,  hence $V$ acts faithful, which also proves this part of the
  statement.\\
 
  {\bf Step 2:} The central extension in question is
   $\Sigma^*=\Z_2\rightarrow G\rightarrow \Gamma$.
   Take a Section $s$ and $\sigma\in Z^2(\Gamma,\Sigma^*)$; we have seen during
   the proof of Theorem \nref{thm_Construction} in claim 2 that
   $$\sigma(\bar{a},\bar{b})\sigma^{-1}(\bar{b},\bar{a})=[a,b]$$
   Because the chosen generators $g_i\in G$ map to the symplectic root system
  $\phi(i)$, we know the commutators $[g_i,g_j]\in[G,G]=\Sigma^*$: Take $\theta$
  the generator of $\Sigma=\Z_2$, then the
    twisted $\Gamma$-action after applying $f_\theta$ on an
    element $v_{\bar{b}}\in M_{\bar{b}}$ reads as:
    \begin{align*}
      \bar{a}.f_\theta(v_{\bar{b}})
      &\stackrel{!}{=}\sigma_\theta(\bar{a},\bar{b})
	\sigma_\theta^{-1}(\bar{b},\bar{a})
	f_\theta(\bar{a}.{v_{\bar{b}}})\\
      &=\left(u(\bar{a},\bar{b})u^{-1}(\bar{b},\bar{a})\right)(\theta)
	f_\theta(\bar{a}.v_{\bar{b}})\\
      &=\left(\langle\bar{a},\bar{b}\rangle\right)(p)
	f_\theta(\bar{a}.v_{\bar{b}})
    \end{align*}
  Hence any decorating character on some decorating group element $\chi_k(g_l)$
  picks up an additional $-1$ iff $[g_k,g_l]\neq 1$ iff
  $\langle\bar{g_k},\bar{g_l}\rangle\neq 0$.\\

  {\bf Step 3:} We now construct a
  $\Gamma$-\ydm with an action $\Sigma=\Z_2$ by twisted permutation symmetries
  as in the
  example $\D_4$ in Section \nref{sec_D4}. We start with
  the indecomposable $N=\bigoplus_{i=1}^n N_i$ constructed in step 1. Then we
  add the necessary twisted image $f_\theta(N)$ for $\theta$ the
  generator of $\Sigma=\Z_2$ (see Remark \nref{rem_twistedAction}):
  $$N_\sigma:={N}_{\sigma_\theta}={N}_{\sigma(\theta)}$$
  $N_\sigma$ is hence the sum of simple \ydms ${N_\sigma}_i$ given by the
  same group elements $\phi(i)$ but with twisted $\Gamma$-action:
  $$\chi^{\sigma_\theta}_i(\bar{b})
    :=\left(\langle\phi(i),\bar{b}\rangle\right)(\theta)\chi_i(\bar{b})$$
  $$\bar{a}._{\sigma_\theta}v_{\bar{b}}
    =\left(\langle\phi(i),\bar{b}\rangle\right)(\theta)$$
  By construction $M:=N\oplus N_\sigma$ admits a
  twisted symmetry $f_\theta$ interchanging 
  $N_i\leftrightarrow {N_\sigma}_i$.\\
  
  {\bf Step 4:} We yet have to check that $M$ still has a finite Nichols
  algebra, so we determine its full Dynkin diagram -- as intended, we prove
  now, that it really consists of two disconnected copies of the given one.
  First be reminded on Corollary \nref{cor_diagramAutomosphisms} that twisted
  symmetries leave Dynkin diagrams and q-diagram invariant, so the diagrams of
  $N,N_\sigma$ coincide.

  Hence the tricky part is, that there are no additional mixed
  edges between any $N_i\leftrightarrow {N_\sigma}_j$, i.e.
$c_{N_i,{N_\sigma}_j}c_{{N_\sigma}_j,N_i}=id$.
  This is precisely where we need the specific base choice $\phi(i)$ to be a
  symplectic root system
      (Definition \nref{def_SymplecticRootsystem}) together with the
      fact that all $q_{ij}=\pm 1$. We
      have to calculate the mixed braiding factors:
  \begin{align*}
    q &:= q_{N_i,{N_\sigma}_j}q_{{N_\sigma}_j,N_i}\\
    &= \chi_i(\phi(j))\chi^{\sigma_\theta}_j(\phi(i))\\
    &=\chi_i(\phi(j))\cdot
    \sigma_\theta(\phi(j),\phi(i))\sigma_\theta^{-1}(\phi(i),\phi(j))
    \chi_j(\phi(i))\\
    &=\langle\phi(i),
    \phi(j)\rangle(\theta)\chi_i(\phi(j))\chi_j(\phi(i))\\
    &=\langle\phi(i),\phi(j)\rangle(\theta)q_{ij}q_{ji}
  \end{align*}
  We have to distinguish two cases that yield $q=1$ in different ways:
  \begin{itemize}
    \item Suppose $i,j$ disconnected in the original diagram. Then
      $q_{ij}q_{ji}=1$ and at the same time by construction
      $\langle\phi(i),\phi(j)\rangle=0$, hence $q=1$.
    \item Suppose $i,j$ connected by a single edge. Then
      $q_{ij}q_{ji}=-1$ and at the same time by construction
      $\langle\phi(i),\phi(j)\rangle\neq 0$, hence $=\theta^*$ for the
      generator of $\Sigma^*\cong\Z_2$ with $\theta^*(\theta)=-1$. Hence we
      again get $q=1$.\\
  \end{itemize}

  {\bf Step 5:} Thus we are done: We constructed a twist-symmetric
    indecomposable \ydm
    $M$ over $\Gamma$ with finite-dimensional Nichols algebra and Hilbert
    series $\H_M(t)=\H_{N}(t)\H_{N_\sigma}(t)=\H_{N}(t)^2$. Hence the covering
    $G$-\ydm $\tilde{M}$ is indecomposable and the Nichols algebra
    $\B(\tilde{M})$ over $G$ has Hilbert series $\H_{\tilde{M}}(t)=\H_{{M}}(t)$,
    especially it is finite-dimensional.
\end{proof}

\newpage

\subsection{Example \texorpdfstring
{$A_4\times A_4\mapsto A_4$}
{A4 x A4 -> A4}
}\label{sec_exampleA4}
  We realize $A_4$ as prescribed over a group $G$ with 2-rank
  $dim_{\F_2}(G/G^2)=4$ and no 2-center
  $dim_{\F_2}(Z(G)/G^2)=0$, such as the
  extraspecial group $G=2_+^{4+1}=\D_4\ast \D_4$ (the central product
  identifies
  the two dihedral centers), which is generated by mutually discommuting
  involutions $x,y$ and $x',y'$, corresponding to a symplectic basis of the
  nondegenerate symplectic vector space $V=\Gamma=\F_2^4$. We need a 
  $\Gamma$-\ydm of type $A_4\times A_4$ admitting an  involutory twisted
symmetry
  $$M=N\oplus N_\sigma=:(M_1\oplus M_2\oplus M_3\oplus M_4)
    \oplus (M_5\oplus M_6\oplus M_7\oplus M_8)$$
  where each $M_k=\O_{\bar{g}_k}^{\chi_k}$ is $1$-dimensional. The {group
  elements} are determined by the respective symplectic root system in Theorem
  \nref{thm_SymplecticRootsystem}:
  $$\bar{g}_1=\bar{g}_5=x \qquad \bar{g}_2=\bar{g}_6=y \qquad
    \bar{g}_3=\bar{g}_7=xx' \qquad \bar{g}_4=\bar{g}_8=y'$$
  Then the {characters} $\chi_k$ for $k\leq 4$  were defined in such a way
  that $\chi_k(\bar{g}_k)=-1$, and $\chi_k(\bar{g}_l)=-1$ for edges $k<l$ and
  $+1$ else.
  This has to be basis-transformed to be expressed as row vector showing the
  values in the original basis $(\chi(x),\chi(y),\chi(x'),\chi(y'))$:
  \begin{center}
  \begin{tabular}{lrclr}
    $\chi_1$ &  $=(-1,-1,-1,+1)$ & $\qquad$ & $\chi_2$ & $(+1,-1,-1,+1)$ \\
    $\chi_3$ &  $=(+1,+1,-1,-1)$ & $\qquad$ & $\chi_4$ & $(+1,+1,+1,-1)$ \\
  \end{tabular}
  \end{center}
  As 	calculated in general, the {twisted characters}
  $\chi_{4+k}=\chi_{k}^\sigma$ catch an additional $-1$ on every element
  $G$-discommuting with $g_k$ resp. $\not\perp\bar{g}_k$ in $V$:
  \begin{center}
  \begin{tabular}{lrclr}
    $\chi_1$ &  $=(-1,+1,-1,+1)$ & $\qquad$ & $\chi_2$ & $(-1,-1,+1,+1)$ \\
    $\chi_3$ &  $=(+1,-1,-1,+1)$ & $\qquad$ & $\chi_4$ & $(+1,+1,-1,-1)$ \\
  \end{tabular}
  \end{center} 
  Altogether we find the following covering Nichols algebra $\B(\tilde{M})$
  with Hilbert series $\H(t)$ and dimension $\dim(\B(\tilde{M}))$  as for $M$
  and hence the square of  the Hilbert  series of the prescribed diagram $A_4$
  of $M$ for $q=-1$  \cite{Heck09}:

  $$\H(t)=\left([2]^{4}_{t}[2]^{3}_{t^2}[2]^{2}_{t^3}[2]_{t^4}\right)^2
    \qquad
    \dim(\B(\tilde{M}))=\H(1)=2^{20}=2^{|\Phi^+(A_4\times A_4)|}$$
 
  \image{0.26}{DynkinA4A4Decorated}

\subsection{Ramified Case \texorpdfstring
{$E_6\mapsto F_4$}
{E6 -> F4}
}\label{sec_RamifiedEF}

The examples of the last two Sections are ``generically'' exploit a disconnected
doubling of a rather arbitrary Dynkin diagram and yield simply laced coverings.
Every
(nonabelian) edge corresponds to the $\D_4$ example above and all conjugacy
classes have same lengths.
It turns out, that the \emph{ramified case} involving different conjugacy class
lengths is far more restrictive! We shall now give an example of this type, with
$\Sigma=\Z_2$ the diagram automorphism of a single $E_6$-diagram and the
covering Nichols algebra
has non-simply-laced diagram $F_4$:
\vspace{-.5cm}
\image{0.26}{DynkinE6F4}

\begin{theorem}\label{thm_RamifiedEF}
  Suppose $G$ a $2$-saturated group with $[G,G]=\Z_2$
  and $\Gamma:=G/[G,G]$ s.t.
  \begin{itemize}
    \item $dim_{\F_2}(V)=dim_{\F_2}(G/G^2)=4$
    \item
  $dim_{\F_2}(V^\perp)=dim_{\F_2}(Z(G)/G^2)=2$
  \end{itemize}
  Then there exists a suitable $\Gamma$-\ydm $M$ of type $E_6$
  with an involutory diagram automorphisms. The covering $G$-\ydm $\tilde{M}$
  decomposes into 4 simple \ydm like
  $\tilde{M}=\bigoplus_{\tilde{k}=1}^4\tilde{M}_{\tilde{k}}$, has dimension
  $6$ and moreover: 
  \begin{itemize}
   \item $[G,G]$ acts trivially on $\tilde{M}$, which is hence
	  diagonal, but $V$ acts faithfully. 
   \item $\tilde{M}$ is minimally indecomposable.
   \item $\B(\tilde{M})$ has Hilbert series 
	 $$\H(t)=[2]^6_{t} [2]^5_{t^2} [2]^5_{t^3} [2]^5_{t^4} [2]^4_{t^5}
	[2]^3_{t^6}[2]^3_{t^7} [2]^2_{t^8} [2]_{t^9}
	[2]_{t^{10}}[2]_{t^{11}}$$
	and is thus especially of dimension 
	$$\dim\left(\B(\tilde{M})\right)=\H(1)=2^{36}=2^{|\Phi^+(E_6)|}$$
   \item $\tilde{M}$ has the Dynkin diagram $F_4$, where short
      roots correspond to conjugacy classes of length $2$ and long roots
      to a central elements.
  \end{itemize}
  In section \nref{sec_NondiagonalExamples} we give an example of a nondiagonal
  Doi twist of $\B(\tilde{M})$ over $\Z_2^2\times \D_4$.
\end{theorem}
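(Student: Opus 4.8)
The plan is to follow the template used for the other cases of Theorem~\ref{thm_AllZpOrbifolds}: I first build a diagonal $\Gamma$-\ydm $M$ whose Nichols algebra is the diagonal one of Cartan type $E_6$ with all $q_{ii}=-1$ and which carries an action of $\Sigma=\Z_2$ by twisted permutation symmetries realizing the order-$2$ diagram automorphism of $E_6$; then the Covering Construction Theorem~\ref{thm_Construction} produces $\tilde{M}$ over $G$, Corollary~\ref{cor_braidingIsomorphic} identifies $\B(\tilde{M})\cong\B(M)$ as $\Z$-graded algebras, and the folding Theorem~\ref{thm_Fold} reads off the Dynkin diagram of $\tilde{M}$ as $F_4$.

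\emph{Construction of $M$.} Since $\dim_{\F_2}(V)=4$ and $\dim_{\F_2}(V^\perp)=2$, the symplectic space $V=G/G^2$ has a symplectic basis consisting of one hyperbolic pair $x,y$ with $\langle x,y\rangle=1$ together with two nullvectors $z_1,z_2$ spanning $V^\perp$; here $\{x,y\}$ is precisely the minimal symplectic root system for the split component $A_2$ of $F_4$, and $V=\langle z_1,z_2\rangle\oplus^\perp\langle x,y\rangle=V_{\mathrm{inert}}\oplus^\perp V_{\mathrm{split}}$. As $G$ is $2$-saturated, Lemma~\ref{lm_Saturated} lifts this basis to a minimal generating set of $G$, and I let $a,b,c,d\in\Gamma$ be the images of the generators lifting $x,y,z_1,z_2$; they generate $\Gamma$ minimally and map to a basis of $\Gamma/\Gamma^2=V$. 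I place the six $1$-dimensional summands $M_1,\dots,M_6$ of $M$ on the $E_6$-diagram so that its diagram automorphism $\theta$ swaps $1\leftrightarrow 2$ and $3\leftrightarrow 4$ and fixes the two inert nodes $5,6$, with coactions $\bar g_1=\bar g_2=a$, $\bar g_3=\bar g_4=b$, $\bar g_5=c$, $\bar g_6=d$. Finally I choose the characters: taking $\chi_1,\chi_3,\chi_5,\chi_6\in\Hom(\Gamma,\{\pm1\})=V^*$ freely and setting $\chi_2:=\chi_1^{\sigma_\theta}$, $\chi_4:=\chi_3^{\sigma_\theta}$ as forced by the twisted symmetry (Remark~\ref{rem_twistedAction}), a short linear computation over $\F_2$ shows one can arrange all $q_{ii}=\chi_i(\bar g_i)=-1$, the monodromies $q_{ij}q_{ji}=\chi_j(\bar g_i)\chi_i(\bar g_j)$ to equal $-1$ on exactly the five edges of the chosen $E_6$ and $+1$ on every non-edge, and the six characters to span $V^*$ so that $V$ acts faithfully. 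By Corollary~\ref{cor_diagramAutomosphisms} the permutation $f_\theta$ ($1\leftrightarrow 2$, $3\leftrightarrow 4$, $5,6$ fixed) is then a twisted permutation symmetry with respect to the cocycle $\sigma$ representing $\Z_2\rightarrow G\rightarrow\Gamma$; note $f_\theta$ automatically fixes $M_5,M_6$ because $z_1,z_2\in V^\perp$, whence by Lemma~\ref{lm_SplitInert} their decorations are central in $G$.

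\emph{Conclusion.} By Heckenberger's classification \cite{Heck09} (Theorem~\ref{thm_Cartan}) the diagonal Nichols algebra $\B(M)$ of Cartan type $E_6$ with all $q_{ii}=-1$ is finite-dimensional, with Hilbert series the $q=-1$ Poincar\'e polynomial $\prod_{\alpha\in\Phi^+(E_6)}[2]_{t^{\mathrm{ht}(\alpha)}}$ --- which, collecting positive roots by height, is exactly the stated series --- and dimension $2^{|\Phi^+(E_6)|}=2^{36}$. The Covering Construction Theorem~\ref{thm_Construction} now yields the $G$-\ydm $\tilde{M}$ of dimension $6$, which by Lemma~\ref{lm_SplitInert} splits into four simple summands (the two $\theta$-orbits of dimension $2$ and the two inert nodes of dimension $1$); $[G,G]=\Sigma^*$ acts trivially so $\tilde{M}$ is non-faithful, while $V$ acts faithfully, and by Corollary~\ref{cor_braidingIsomorphic} we get $\B(\tilde{M})\cong\B(M)$ as $\Z$-graded algebras, hence the same Hilbert series and dimension. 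Indecomposability of $\tilde{M}$ follows from the corollary to Theorem~\ref{thm_Construction} because $\{a,b,c,d\}$ generates $\Gamma$, and minimality because that set was a minimal generating set: deleting any simple summand of $\tilde{M}$ leaves a support generating a proper subgroup of $G$, hence a decomposable sub-\ydmP. For the Dynkin diagram I apply Theorem~\ref{thm_Fold} edge by edge: the $\theta$-stable $A_2\times A_2$ carried by $M_1,M_2,M_3,M_4$ folds by case~(3) to one $A_2$-edge between the two split covering nodes; the $A_3$ with middle node $5$ carried by $M_3,M_4,M_5$ folds by case~(4) to a $B_2$-edge with the split node short; the inert edge between nodes $5,6$ survives by case~(2) as an $A_2$-edge; and every remaining pair of covering nodes is disconnected in $E_6$ and hence, by case~(1), in $\tilde{M}$. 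The resulting diagram is $F_4$, with short roots the length-$2$ conjugacy classes and long roots the central elements; the nondiagonal Doi twist over $\Z_2^2\times\D_4$ is produced in Section~\ref{sec_NondiagonalExamples}.

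\emph{Main obstacle.} The one genuinely delicate step is the ``ad-hoc continuation by two inert nodes'': one has to thread the characters so that, simultaneously, every self-braiding is $-1$, the monodromy matrix reproduces exactly the $E_6$ edge set with no spurious edges, the configuration is compatible with the prescribed twisted symmetry, and $V$ acts faithfully --- all for an arbitrary $G$ with the given invariants, for which the coactions are essentially forced by the symplectic root system and $2$-saturation. The subtle part is the block joining the two split $\theta$-orbits: nodes $1,3$ and $1,4$ carry the same coactions $(a,b)$, yet $\chi_4=\chi_3^{\sigma_\theta}$ multiplies the relevant character value by $\langle x,y\rangle_\theta=-1$, so exactly one of $q_{13}q_{31}$, $q_{14}q_{41}$ is $-1$; this is why the split part is $A_2\times A_2$ rather than a $4$-cycle, it uses crucially that $x,y$ form a hyperbolic pair, and it pins $\dim_{\F_2}(V^\perp)$ down to exactly $2$. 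Once such an $M$ is in hand, every Nichols-algebraic assertion is inherited from the diagonal case via Corollary~\ref{cor_braidingIsomorphic} and the purely combinatorial folding of Theorem~\ref{thm_Fold}.
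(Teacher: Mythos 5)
Your proposal is correct and follows essentially the same route as the paper: a symplectic basis of $V=G/G^2$ with one hyperbolic pair decorating the two split $\theta$-orbits and two nullvectors decorating the two inert nodes, characters chosen (with $\chi_{\theta.i}=\chi_i^{\sigma_\theta}$) so that the resulting diagonal braiding has all $q_{ii}=-1$ and monodromy matrix exactly $E_6$, followed by the covering construction and the folding of Theorem \nref{thm_Fold} to obtain $F_4$. The only cosmetic differences are that the paper writes down the six characters explicitly rather than arguing existence by a linear system over $\F_2$, and that you make the appeal to Theorem \nref{thm_Fold} explicit where the paper leaves it implicit; your identification of the hyperbolic-pair mechanism forcing $A_2\times A_2$ rather than a $4$-cycle is exactly the crucial point.
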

\begin{proof}
  Denote by $\bar{z},\bar{z}',\bar{x},\bar{y}\in\Gamma$ some lifts of a
  basis of the $4$-dimensional symplectic vector space
  $V=G/G^2=\Gamma/\Gamma^2$ with
  $2$-dimensional nullspace, such that $\bar{z},\bar{z}'$ were nullvectors and
  $\bar{x},\bar{y}$ was a symplectic base pair in $V$. As $G$ was assumed to be
  $2$-saturated (Def. \nref{def_Saturated}), we may choose these lifts to be a
  minimally generating system  of $\Gamma$ as well. Because $[G,G]\cong \Z_2$,
  any further lifts to $z,z',x,y\in G$ will obey by Section
  \nref{sec_SymplecticVector}:
  $$z,z'\in Z(G) \qquad [x,y]\neq 1$$
  We directly construct the $\Gamma$-\ydm $\bigoplus_{k=1}^6
  \O_{\bar{g}_k}^{\chi_k}$ of type $E_6$, but otherwise proceed as  in the
  unramified case. Note that the following could also be derived 
  systematically using the (rather trivial) symplectic root system 
  $\bar{x},\bar{y}$ for the
  aspired  split part of $V$ and character via some ordering of the nodes, as
  it is done for the remaining ramified case below; but here we want to keep
  everything explicit! Further denote any {character} $\chi\in\Gamma^*$
  as row-vectors containing the basis images  
  $(\chi(\bar{z}),\chi(\bar{z}'),\chi(\bar{x}),\chi(\bar{y}))$, then $M$ shall 
  be (we've introduced additional signs for the faithfulness-statement):

  \image{0.26}{DynkinE6Decorated}

  One can check directly, that $q_{ii}=-1$ and the $q_{ij}q_{ji}=\pm1$ exactly
  match the given diagram. Furthermore, already $\chi_1,\chi_2,\chi_3,\chi_4$ is
  $\F_2$-linearly independent and $z,z'$ have been constructed to act as $-1$
  on $x$ resp. $y$, hence the faithfulness assertions hold. This defined a	
  proper Nichols algebra $\B(M)$ of dimension $2^{36}$ and the prescribed
  Hilbert series by \cite{HS10a} Theorem 4.5.

  We calculate now, that the $E_6$
  diagram automorphisms $f_\theta$ is here a twisted symmetry:
  $$\chi_{1}^\sigma(g_k)
    = \sigma_\theta(g_k,g_1)\sigma_\theta^{-1}(g_1,g_k)\chi_1(g_k)
    = \langle \bar{g}_k,z\rangle\chi_1(g_k)=\chi_1(g_k)$$
  \begin{align*}
    \chi_{3}^\sigma(z)
    &= \langle z,x\rangle\chi_3(z)=\chi_3(z)=+1=\chi_5(z)\\
    \chi_{3}^\sigma(z')
    &= \langle z',x\rangle\chi_3(z')=\chi_3(z')=-1=\chi_5(z')\\
    \chi_{3}^\sigma(x)
    &= \langle x,x\rangle\chi_3(x)=\chi_3(x)=-1=\chi_5(x)\\
    \chi_{3}^\sigma(y)
    &= \langle y,x\rangle\chi_3(z')=-\chi_3(y)=+1=\chi_5(y)
  \end{align*}
  This shows $\chi_1^\sigma=\chi_1$ and $\chi_3^\sigma=\chi_5$. The
  same calculations prove $\chi_2^\sigma=\chi_2$ and $\chi_4^\sigma=\chi_6$,
  hence the generator $f_\theta:M\rightarrow M$ defines a twisted symmetry
  action of $\Sigma=\Z_2$ on $M$.
  The covering Nichols algebra $\B(\tilde{M})$ over $G$ then has the asserted
  properties.
\end{proof}

\newpage
\subsection{Ramified Cases \texorpdfstring
{$A_{2n-1}\mapsto C_n$}
{A{2n-1} -> Cn}
}\label{sec_RamifiedAB}

The second ramification will be treated more systematically, by completely
reducing it to the unramified case $A_{n-1}\times A_{n-1}\mapsto A_{n-1}$
and
an additional inert node causing an additionally ramified edge.

\image{0.26}{DynkinAnBn}

\begin{theorem}\label{thm_RamifiedAB}
  Suppose $G$ a $2$-saturated group
  with $[G,G]=\Z_2$ and $\Gamma:=G/[G,G]$, s.t. 
  \begin{itemize}
    \item $dim_{\F_2}(V)=dim_{\F_2}(G/G^2)=n\geq 3$
    \item $dim_{\F_2}(V^\perp)=dim_{\F_2}(Z(G)/G^2)=1+(n-1\mod 2)$
  \end{itemize}
  Then there exists a suitable $\Gamma$-\ydm of type $A_{2n-1}$
  with an involutory diagram automorphisms. The covering \ydm
  $\tilde{M}$ over $G$ has rank $n$, dimension $2n-1$ and moreover:
  \begin{itemize}
   \item $[G,G]$ acts trivially on $\tilde{M}$, which is
	  diagonal, but $V$  acts faithfully.
   \item $\tilde{M}$ is minimally indecomposable.
   \item $\B(\tilde{M})$ has Hilbert series  
      $$\H(t)=[2]^{2n-1}_{t^1} [2]^{2n-2}_{t^2} [2]^{2n-3}_{t^3}\cdots
	[2]^1_{t^{2n-1}}$$ 
	and is thus especially of dimension 
	$$\dim\left(\B(\tilde{M})\right)=\H(1)=2^{n(2n-1)}
	  =2^{|\Phi^+(A_{2n-1})|}$$
   \item $\tilde{M}$ has the nonabelian Dynkin diagram $C_n$ where short
      roots correspond to conjugacy classes of length $2$ and the
      long root to a central element.
  \end{itemize}
  Exemplary nondiagonal and even faithful Doi twists of
  type $C_3,C_4$ over $\D_4\times \Z_2$ resp. $\D_4\times\Z_2^2$ a given in
  Section \nref{sec_NondiagonalExamples}.
\end{theorem}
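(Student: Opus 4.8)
The plan is to imitate the two preceding constructions: use the minimal symplectic root system of $A_{n-1}$ for a ``split part'' of a type-$A_{2n-1}$ Yetter-Drinfel'd module over $\Gamma$, adjoin a single inert node at the centre of the $A_{2n-1}$-chain, apply the Covering Construction, and read off $C_n$ from Theorem \nref{thm_Fold}. First I would regard $V=G/G^2=\Gamma/\Gamma^2$ with its symplectic form as in Section \nref{sec_SymplecticVector}. The hypotheses $\dim_{\F_2}V=n$ and $\dim_{\F_2}V^\perp=1+(n-1\mod 2)$ are precisely what is needed to split $V$ orthogonally as $V=V_{inert}\oplus^\perp V_{split}$ with $V_{inert}=\langle\bar z\rangle$ one-dimensional in the nullspace and $V_{split}$ of dimension $n-1$ and nullspace dimension $(n-1\mod 2)$; Theorem \nref{thm_SymplecticRootsystem} then provides a minimal symplectic root system $\phi\colon\{1,\dots,n-1\}\to V_{split}$ for $A_{n-1}$. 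Since $\{\phi(1),\dots,\phi(n-1),\bar z\}$ is an $\F_2$-basis of $V$ and $G$ is $2$-saturated (Definition \nref{def_Saturated}), Lemma \nref{lm_Saturated} lifts it to a minimal generating set $g_1,\dots,g_{n-1},z$ of $G$; here $z\in Z(G)$ because $\bar z\in V^\perp$, while $g_i,g_j$ discommute iff $i,j$ are adjacent in $A_{n-1}$.

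Next I would run Step~1 of the proof of Theorem \nref{thm_Unramified} on $\phi$ to obtain an indecomposable $\Gamma$-\ydm $N=\bigoplus_{i=1}^{n-1}\O_{\bar g_i}^{\chi_i}$ of type $A_{n-1}$ with braiding matrix in $\{\pm1\}$ (extending the characters by $\chi_i(\bar z):=1$), form the twist $N_\sigma:=N_{\sigma_\theta}$ (Remark \nref{rem_twistedAction}), and adjoin one more simple summand $M_\bullet:=\O_{\bar z}^{\chi_\bullet}$ with $\chi_\bullet$ the unique character satisfying $\chi_\bullet(\bar z)=\chi_\bullet(\phi(n-1))=-1$ and $\chi_\bullet(\phi(i))=1$ for $i<n-1$ (this character exists because the $\phi(i),\bar z$ form a basis). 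Put $M:=N\oplus M_\bullet\oplus N_\sigma$ and relabel its simple summands $1,\dots,2n-1$ so that $M_\bullet$ is the middle node $n$, node $i$ ($i\le n-1$) is the $i$-th node of $N$, and node $2n-i$ is the matching node of $N_\sigma$. A short check of the monodromies $q_{ij}q_{ji}$ -- the split--split ones vanish by the symplectic-root-system argument of Step~4 of the proof of Theorem \nref{thm_Unramified}, the $M_\bullet$--split ones are pinned by the choice of $\chi_\bullet$ together with the symmetry under $f_\theta$, and all self-braidings equal $-1$ -- shows that the $q$-diagram of $M$ is exactly the chain $A_{2n-1}$ with every $q_{\alpha\alpha}=-1$. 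Hence by Heckenberger \cite{Heck09} (equivalently \cite{HS10a} Thm.~4.5) $\B(M)$ is finite-dimensional of type $A_{2n-1}$ with the stated Hilbert series and $\dim\B(M)=2^{n(2n-1)}=2^{|\Phi^+(A_{2n-1})|}$, and the triangular (hence over $\F_2$ invertible) character matrix yields the asserted faithfulness of $V$.

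Then I would verify that the involution $f_\theta$ swapping node $i\leftrightarrow 2n-i$ (i.e.\ $N\leftrightarrow N_\sigma$) and fixing $M_\bullet$ is a twisted symmetry of $M$ with respect to $\sigma$: on the split part this is the unramified argument, and on $M_\bullet$ it holds because $\bar z\in V^\perp$ makes $\langle\bar z,-\rangle_\theta$ trivial, so $\chi_\bullet^{\sigma_\theta}=\chi_\bullet$. The Covering Construction (Theorem \nref{thm_Construction}) then produces a $G$-\ydm $\tilde{M}$ with $\B(\tilde{M})\cong\B(M)$ as graded algebras (Corollary \nref{cor_braidingIsomorphic}), hence with the same Hilbert series and dimension; $\tilde{M}$ is minimally indecomposable because its support generates $G$. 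By Lemma \nref{lm_SplitInert}, $M_\bullet$ is inert (central grading $\bar z$) and the $n-1$ orbits $\{i,2n-i\}$ are split (length-$2$ conjugacy classes), so $\tilde{M}$ has rank $n$ and dimension $2(n-1)+1=2n-1$. Finally I would feed the known $A_{2n-1}$ ad-structure of $\B(M)$ into Theorem \nref{thm_Fold}: each consecutive pair of split orbits sits in an $A_2\times A_2$ sub-diagram and folds to an $A_2$-edge (case~3), so the split orbits form an $A_{n-1}$-chain; the triple $\{n-1\},\{n\},\{n+1\}$ is an $A_3$ with the inert node $n$ in the middle and folds to a $B_2$-edge whose short root is the split orbit $\{n-1,n+1\}$ (case~4); all other pairs are disconnected (case~1). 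This assembles to the Dynkin diagram $C_n$ with the long root the central element and the short roots the length-$2$ conjugacy classes, which completes the argument.

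The hard part will be the bookkeeping around the single extra inert node: $\chi_\bullet$ must be chosen so that \emph{simultaneously} (i) $f_\theta$ remains a twisted symmetry after $M_\bullet$ is adjoined -- which is what forces the grading of $M_\bullet$ into $V^\perp$ -- and (ii) $M_\bullet$ is connected in the $q$-diagram to \emph{exactly} the two preimages $n-1,n+1$ of the node $\widetilde{n-1}$ and to nothing else, so that the total diagram is genuinely $A_{2n-1}$ and case~4 of Theorem \nref{thm_Fold} creates precisely one ramified edge. Checking that this is compatible with $N$ being of type $A_{n-1}$ with $\pm1$-braiding and with the symplectic-root-system constraint on $V_{split}$ is where the care is needed; once the $q$-diagram of $M$ is identified, the rest (Hilbert series, dimension count, the $A_3\Rightarrow B_2$ fold) is routine.
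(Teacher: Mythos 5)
Your proposal is correct and follows essentially the same route as the paper: both reduce to the unramified $A_{n-1}\times A_{n-1}$ construction over the split part $W=V_{split}$ of the symplectic space, adjoin one inert node graded by a central lift of a nullvector $\bar z$ with the character chosen to hit exactly the two copies of one end node of the chain, verify the resulting $q$-diagram is $A_{2n-1}$ with the swap-and-fix involution a twisted symmetry, and then fold via the covering construction. The only differences are bookkeeping (the paper attaches the inert node to the leftmost nodes of the two copies and numbers it first, you attach it to the rightmost and place it in the middle of the chain), which is immaterial.
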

\begin{proof}
  As in the ramified case $E_6\mapsto F_4$ above, we use the
  prescribed dimension $1+(n-1 \mod 2)$ nullspace of $V=G/G^2$ to
  decompose $V=\bar{z}\F_2\oplus W$ with $\dim(W^\perp)=n-1\mod 2$ for the
  split nodes and $z\in Z(\Gamma)$ for the inert node.

  Our main goal is to construct a $\Gamma$-\ydm $M$ of dimension $1+2(n-1)$
  and Dynkin diagram $A_{2n-1}$ with the involutory diagram automorphism a
  twisted symmetry. The starting point is the \ydm constructed in the proof of
  Section \nref{sec_Unramified} of dimension $2(n-1)$ and Dynkin diagram
  $A_{n-1}\times A_{n-1}$, numbered $2\ldots 1+2(n-1)$, with an
  involutory twisted symmetry over the subgroup $\Gamma'\subset\Gamma$
  generated  by any lifts of $W$. Denote the leftmost nodes $2,3$ of both copies
  by  $\O_{\bar{g}}^{\chi'},\O_{\bar{g}}^{\chi''}$. We extend all used
characters
  trivially to $\Gamma$ except 
  $$\chi({\bar{z}})=-1\qquad \chi({\bar{g}})=-1 \qquad \chi({\bar{g}}_k)=+1$$
  for all other ${\bar{g}}_k$, which is possible because the images of
  $\bar{g}=\bar{g}_1,\ldots \bar{g}_n$ form a $W$-basis. Note that the
  former
  \ydm had already been proven to be faithful
  over the $\Gamma$-quotient $W$, with $\bar{z}$ now acting trivial on all but
  the new node $M_1$, hence faithfulness of $V$ again holds.

  {\bf First} we have to check that $M$ indeed has decorated diagram
  $A_{1+2(n-1)}$ and hence the asserted Hilbert series by \cite{HS10a} Theorem
  4.5. We've shown that already for the subdiagram $A_{n-1}\times A_{n-1}$, and
  the
  additional node $M_1$ obeys for $k\geq 4$:
  \begin{align*}
      q_{11}
      &=\chi(\bar{z})=-1\\
      q_{12}q_{21}
      &=\chi(\bar{g})\chi'(\bar{z})=(-1)(+1)=-1\\
      q_{13}q_{31}
      &=\chi(\bar{g})\chi''(\bar{z})=(-1)(+1)=-1\\
      q_{1k}q_{k1}
      &=\chi(\bar{g}_k)\chi_k(\bar{z})=(+1)(+1)=+1\\
  \end{align*}
  
  {\bf Secondly} we have to extend the established twisted symmetry $f_p$ of
  $A_{n-1}\times  A_{n-1}$ by $f_p(x_1):=x_1$, which is possible by $z$'s
  centrality in $G$:
  \begin{align*}
    \chi^\sigma(\bar{h})
    &= \sigma(\bar{z},\bar{h})\sigma_p^{-1}(\bar{h},\bar{z})\chi(\bar{h})\\
    &= \langle \bar{h},\bar{z}\rangle\chi(\bar{h})=\chi(\bar{h})
  \end{align*}
  The covering \ydm over $G$ then has the asserted properties.
\end{proof}

\subsection{Disconnected Diagrams}\label{sec_Disconnected}

So far we have constructed covering Nichols algebras $\B(\tilde{M})$ with
connected Dynkin diagram over $2$-saturated groups $G$. We will now show, how
disconnected diagrams can be realized, regardless of the technical assumption
of $G$ to be $2$-saturated. As a corollary, we will note that every
group
$G$ with $[G,G]=\Z_2$ and no restrictions on 2-rank and 2-center admits
(possibly disconnected) finite-dimensional indecomposable Nichols algebras
$\B(\tilde{M})$. Note that in the next Lemma we actually construct a whole
family of covering Nichols algebras for arbitrary 2-rank and 2-center (without
claiming these are all), but for the existence corollary, very simple choices
suffice.

\begin{lemma}\label{lm_Disconnected}
  Let $G$ be a $2$-saturated group with $[G,G]\cong \Z_2$ and 2-rank
  and 2-center 
  \begin{align*}
  n&:=\dim(V)=dim_{\F_2}(G/G^2)\\
  k&:=\dim(V^\perp)=dim_{\F_2}(Z(G)/G^2)
  \end{align*}
  For every numerical decomposition $(n,k)=\sum_i(n_i,k_i)+(0,k_0)$, where all
  $(n_i,k_i)$ appear as 2-rank and 2-center in the list of Theorem
  \nref{thm_AllZpOrbifolds}, we can construct a minimally indecomposable
  covering Nichols algebra over $G$ with connected ramified or
  unramified components as  prescribed by Theorem \nref{thm_AllZpOrbifolds} for
  $(n_i,k_i)$, as well as an  additional inert part $(0,k_0)$, that is an
  arbitrary finite dimensional  indecomposable  Nichols algebra $\B(M^{(0)})$
  over $\Z_2^{k_0}$ from  \cite{Heck09}.
\end{lemma}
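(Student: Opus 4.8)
The plan is to realise the numerical decomposition as an \emph{orthogonal} decomposition of the symplectic vector space $V=G/G^2$, to produce one \ydm with twisted symmetry for each summand by re-running the relevant construction inside that summand, and to glue the pieces into a single diagonal $\Gamma$-\ydm whose $q$-diagram is the disjoint union of the pieces. First, note that each pair $(n_i,k_i)$ appearing in the list of Theorem \nref{thm_AllZpOrbifolds} has $n_i-k_i$ even; writing $V$ in a symplectic basis $\{x_j,y_j,z_l\}$ (nullvectors $z_l$ spanning $V^\perp$, hyperbolic pairs $\langle x_j,y_j\rangle=1$), one can therefore distribute $\tfrac{n_i-k_i}{2}$ hyperbolic pairs and $k_i$ nullvectors into a subspace $V_i$ and the remaining $k_0$ nullvectors into $V_0\subseteq V^\perp$, obtaining $V=\bigoplus_i^\perp V_i\oplus^\perp V_0$ with $\dim V_i=n_i$, $\dim(V_i\cap V_i^\perp)=k_i$, $\dim V_0=k_0$. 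Each $V_i$ is then a symplectic space with exactly the invariants needed by Theorem \nref{thm_SymplecticRootsystem}, so it carries the symplectic root system required by the relevant case of Theorem \nref{thm_AllZpOrbifolds} (for the ramified cases, the symplectic root system of the split part $W_i\subset V_i$). By $2$-saturation (Lemma \nref{lm_Saturated}) I would lift the union of these bases, together with a basis $B_0$ of $V_0$, to a minimal generating set of $G$; since $[G,G]\cong\Z_2$, lifts of nullvectors are central and a lifted hyperbolic pair has commutator the generator of $[G,G]$ (Section \nref{sec_SymplecticVector}).

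For each $i$ I would then run, verbatim inside $V_i$, the steps of the proof of Theorem \nref{thm_Unramified} (resp.\ Theorem \nref{thm_RamifiedEF} or \nref{thm_RamifiedAB}): this yields a $\Gamma$-\ydm $M^{(i)}=\bigoplus_k\O_{\bar g_k}^{\chi_k}$ whose coactions $\bar g_k$ are the lifted generators belonging to $V_i$ and whose characters $\chi_k$ are $\pm1$-valued, triangular, and \emph{extended trivially on all generators outside $V_i$}, carrying a twisted permutation symmetry $f^{(i)}_\theta$ with respect to the fixed cocycle $\sigma$. These constructions go through unchanged inside the larger $G$ because they only use the restriction to $V_i$ of the commutator form of $G$, which by the Remark in Section \nref{sec_SymplecticVector} is the form $\langle-,-\rangle_\theta$ attached to $\sigma$ and is by design the symplectic form of $V_i$. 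For the isotropic summand I would pick any finite--dimensional indecomposable Nichols algebra $\B(M^{(0)})$ over $\Z_2^{k_0}$ from \cite{Heck09} whose support is a basis of $\Z_2^{k_0}$ (e.g.\ diagonal type $A_{k_0}$, so that the minimality claim below is transparent), realised as a $\Gamma$-\ydm with coactions the lifts of $B_0$ and characters extended trivially elsewhere; since $V_0\subseteq V^\perp$ these coactions are central in $G$, so this part is inert and $\Sigma=\Z_2$ acts on it trivially (a twisted symmetry, as $\langle-,-\rangle_\theta$ vanishes on $V_0$). Set $M:=\bigoplus_i M^{(i)}\oplus M^{(0)}$ and $f_\theta:=\bigoplus_i f^{(i)}_\theta\oplus\operatorname{id}_{M^{(0)}}$.

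The crux — and the step I expect to be the main obstacle — is to check that $M$ has no edges between distinct pieces. For $k$ in piece $i$ and $l$ in piece $j\neq i$ (or $l$ inert) one has $q_{kl}=\chi_l(\bar g_k)$ and $q_{lk}=\chi_k(\bar g_l)$; since $\chi_l$ is trivial on the generators of $V_i$ and $\chi_k$ on those of $V_j$, both factors equal $1$, so $q_{kl}q_{lk}=1$ and $\ad{x_k}{x_l}=0$, while $\langle\bar g_k,\bar g_l\rangle_\theta=1$ by $V_i\perp V_j$, so $f_\theta$ also preserves these trivial cross-braidings. Hence $f_\theta$ is a twisted symmetry of $M$ with respect to $\sigma$ (the remaining conditions holding piecewise or trivially across pieces), the $q$-diagram of $M$ is the disjoint union of those of the $M^{(i)}$ and $M^{(0)}$, and therefore $\B(M)$ is finite--dimensional with Hilbert series the product $\prod_i\H_{M^{(i)}}\cdot\H_{M^{(0)}}$; the combined, block-triangular character matrix also shows that $V$ acts faithfully on $M$.

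Finally I would apply the Covering Construction Theorem \nref{thm_Construction} to $(M,f_\theta,\sigma)$ to obtain $\tilde M$ over $G$; by Corollary \nref{cor_braidingIsomorphic}, $\B(\tilde M)\cong\B(M)$ as graded algebras, hence finite--dimensional with the above Hilbert series, with $[G,G]$ acting trivially on $\tilde M$, which is diagonal and on which $V$ acts faithfully. Its Dynkin diagram is read off from Theorem \nref{thm_Fold}: part (1) keeps covering nodes coming from different pieces disconnected, parts (2)--(4) fold each $M^{(i)}$ to the connected component prescribed for $(n_i,k_i)$ by Theorem \nref{thm_AllZpOrbifolds}, and $M^{(0)}$ contributes its inert diagram over $\Z_2^{k_0}$ — exactly the asserted disconnected union. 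Indecomposability holds because the support of $M$ consists of the lifted generators, i.e.\ lifts of a basis of $\Gamma/\Gamma^2$, which generate $\Gamma$, so by the Corollary following Theorem \nref{thm_Construction} $\tilde M$ is indecomposable over $G$; and deleting any simple $G$-summand removes from the image of the support in $V$ a basis vector of some $V_i$ or $V_0$, whence the remaining support spans a proper subspace of $V$ and generates a proper subgroup of $G$ — so $\tilde M$ is minimally indecomposable.
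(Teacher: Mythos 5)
Your proof follows essentially the same route as the paper's: an orthogonal decomposition $V\cong\bigoplus_i^\perp V_i\oplus^\perp V_0$ realizing the numerical data, the constructions of Sections \nref{sec_Unramified}--\nref{sec_RamifiedAB} run on each $V_i$ with all characters extended trivially off $V_i$ so that every cross-monodromy is trivial, the direct sum of the twisted symmetries (trivial on the inert part $M^{(0)}$), and finally Theorem \nref{thm_Construction} together with Theorem \nref{thm_Fold}. You merely make explicit what the paper leaves implicit (the check $q_{kl}q_{lk}=1$ between pieces, faithfulness, and the minimality argument), and you inherit the same bookkeeping quirk as the statement itself: the totally isotropic summand $V_0$ also contributes $k_0$ to $\dim V$, so the decomposition is really $n=\sum_i n_i+k_0$, $k=\sum_i k_i+k_0$, under which your count of hyperbolic pairs comes out right.
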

\begin{proof}
  Consider again $V=G/G^2$ as a symplectic $\F_2$-vector space. The
  assumed decomposition 
  $$(\dim(V),\dim(V^\perp))=\sum_i(n_i,k_i)+(0,k_0)$$
  implies an
  orthogonal decomposition of $V$ as symplectic vector space into 
  $$V\cong \bigoplus_{i}^\perp V_i \oplus^\perp V_0$$
  where $(n_i,k_i)=(\dim(V_i),\dim(V_i^\perp))$ and $k_0=\dim(V_0)$. Apply the
  constructions of Sections \nref{sec_Unramified}-\nref{sec_RamifiedAB} that
  yield \ydms $M^{(i)}$ over $\Gamma=G/[G,G]$ that factorize over
  $\Gamma\rightarrow V$ and where $\B(M^{(i)})$ having a connected Dynkin 
  diagram of the respective type. Then consider 
  $$M:=\bigoplus_i M^{(i)}\oplus M^{(0)}$$
  where the action of $V_i$ on $M^{(j)}$ for $i\neq j$ is trivial and $M^{(0)}$
  is the assumed \ydm over the abelain group $V_0=\Z_2^{k_0}$.
  Hence $c_{M^{(i)}M^{(j)}}c_{M^{(j)}M^{(i)}}=id$, thus we have trivial adjoint
  action $\ad{\B(M^{(i)})}{\B(M^{(j)})}=0$ and the multiplication in $\B(M)$
  yields an isomorphism of vector spaces
  $$\B(M)\cong \bigotimes_i \B(M^{(i)})\otimes \B(M^{(0)})$$
  and the Dynkin diagram of $\B(M)$ consists of mutually disconnected
  components, each of the respective type of $\B(M^{(i)})$ and $\B(M^{(0)})$.

  Take $f_\theta$ the sum of the twisted symmetries employed in the
  construction of each $M^{(i)}$ and trivial on $M^{(0)}$. Then the covering
  Nichols algebra  $\tilde{M}$ has as Dynkin diagram the mutually disconnected
  Dynkin diagrams of each $\B(\tilde{M}^{(i)})$ as they follow from the
  respective construction and a disconnected inert part with the given
  $\B(M^{(0)})$.	
\end{proof}

\begin{corollary}\label{cor_Nonsaturated}
Let the group beof the form  $G=G_{ab}\times G_{sat}$
with $G_{ab}$ abelian and $G_{sat}$ a $2$-saturated group with
$[G_{sat},G_{sat}]\cong\Z_2$. Note that such a decomposition may not be unique.
Suppose $M_{ab}$ a minimally indecomposable $G_{ab}$-\ydm with
finite-dimensional Nichols algebra $\B(M_{ab})$. Suppose
further over the $2$-saturated group $G_{sat}$ an indecomposable
finite-dimensional Nichols algebra $\B(M_{sat})$ from the list in this article,
i.e. Theorem \nref{thm_AllZpOrbifolds} for connected resp. Lemma
\nref{lm_Disconnected} for disconnected diagrams. Then, the
indecomposable $G$-Nichols algebra
$\B(M_{ab}\oplus M_{sat})$ is as a vector space isomorphic to $
\B(M_{ab})\otimes \B(M_{sat})$, hence 
finite-dimensional,  and has as Dynkin diagram a disjoint union
of the diagrams of $\B(M_{ab}),\B(M_{sat})$.
\end{corollary}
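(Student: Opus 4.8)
The plan is to reduce the claim to the same mechanism used in the proof of Lemma \nref{lm_Disconnected}: two Yetter-Drinfel'd modules whose mutual monodromy is trivial have a Nichols algebra on their direct sum that factors, as a graded vector space, as the braided tensor product of the two Nichols algebras, with Dynkin diagram the disjoint union of the two. Here the product decomposition of the group is handed to us directly as $G=G_{ab}\times G_{sat}$ rather than being extracted from a symplectic decomposition of $G/G^2$, so the argument is if anything shorter than in Lemma \nref{lm_Disconnected}.

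First I would regard $M_{ab}$ as a $G$-\ydm via the projection $G\to G_{ab}$ (so $G_{sat}$ acts trivially and the $G$-grading is supported on $G_{ab}\subset G$), and symmetrically $M_{sat}$ as a $G$-\ydm via $G\to G_{sat}$. The single computation to run is that the monodromy $c_{M_{ab}M_{sat}}c_{M_{sat}M_{ab}}$ is the identity: for homogeneous $v\in(M_{ab})_{g}$ with $g\in G_{ab}$ and $w\in(M_{sat})_{h}$ with $h\in G_{sat}$, the braiding formula $v\otimes w\mapsto g.w\otimes v$ sends $v\otimes w\mapsto g.w\otimes v=w\otimes v\mapsto h.v\otimes w=v\otimes w$, because $g\in G_{ab}$ acts trivially on $M_{sat}$ and $h\in G_{sat}$ acts trivially on $M_{ab}$. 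Hence every simple summand of $M_{ab}$ is disconnected from every simple summand of $M_{sat}$ in $M_{ab}\oplus M_{sat}$, all mixed ad-spaces $\ad{\B(M_{ab})}{\B(M_{sat})}$ and $\ad{\B(M_{sat})}{\B(M_{ab})}$ vanish, and in particular all Cartan matrix entries between the two blocks are zero.

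From this I would read off the conclusions exactly as in Lemma \nref{lm_Disconnected}: multiplication in $\B(M_{ab}\oplus M_{sat})$ induces a graded vector-space isomorphism $\B(M_{ab})\otimes\B(M_{sat})\cong\B(M_{ab}\oplus M_{sat})$, so finite-dimensionality of the two factors forces finite-dimensionality of the sum; the Dynkin diagram of $\B(M_{ab}\oplus M_{sat})$ is the disjoint union of those of $\B(M_{ab})$ and $\B(M_{sat})$ by the vanishing of the cross Cartan entries; and $M_{ab}\oplus M_{sat}$ is link-indecomposable over $G$ because its support contains that of $M_{ab}$, which generates $G_{ab}$ (as $M_{ab}$ is indecomposable over $G_{ab}$), together with that of $M_{sat}$, which generates $G_{sat}$ (as $M_{sat}$ is indecomposable over $G_{sat}$, being drawn from Theorem \nref{thm_AllZpOrbifolds} or Lemma \nref{lm_Disconnected}), and these generate $G_{ab}\times G_{sat}=G$.

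I do not expect a genuine obstacle here; the only thing worth a moment's care is that $M_{sat}$ may itself be a sum of several blocks in the disconnected case, but the vanishing-monodromy factorisation is insensitive to any further decomposition of $M_{sat}$, so the reasoning goes through verbatim. As noted in the statement, the splitting $G=G_{ab}\times G_{sat}$ need not be unique, but the construction simply fixes one such splitting at the outset.
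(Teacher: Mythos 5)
Your argument is correct and is exactly the mechanism the paper intends: the corollary is stated without a separate proof precisely because it follows from the trivial-monodromy factorisation already used in Lemma \nref{lm_Disconnected}, and your computation that $c_{M_{ab}M_{sat}}c_{M_{sat}M_{ab}}=\mathrm{id}$ (each factor of $G=G_{ab}\times G_{sat}$ acting trivially on the other's module) is the right single check. The supplementary points you address — the vanishing of mixed ad-spaces, the graded tensor decomposition, and link-indecomposability from the two supports jointly generating $G$ — are all handled correctly.
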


\begin{example}
  Every group $G$ with $[G,G]\cong\Z_2$ is nilpotent and can thus be written as
  a product  $G=G_{odd}\times G_2$, where $G_{odd}$ has odd order and is hence
  abelian,  while $G_2$ is a $2$-group with $[G_2,G_2]\cong \Z_2$ and
  especially $2$-saturated. By the previous lemma we may obtain a 
  finite-dimensional indecomposable Nichols algebra $B(M)$  over $G$ by joining 
  a finite-dimensional indecomposable Nichols algebra over the abelian group 
  $G_{ab}:=G_{odd}$ with a finite-dimensional indecomposable Nichols algebra 
  constructed by Lemma \nref{lm_Disconnected} over $G_{sat}:=G_2$.
\end{example}
Let us come to an explicit easy and generic decomposition:
\begin{example}
  Let thus be $(n,k)$ the type of the symplectic $\F_2$-vector space
  $V=G_2/G_2^2$  as in Lemma \nref{lm_Disconnected}. Applying the lemma,
  to the particular decomposition $(n,0)+(0,k)$ yields a finite-dimensional
  indecomposable covering Nichols algebra $\B(M_2)$, which has as Dynkin diagram
  a disjoint union of an unramified $A_{2n}$ and an arbitrary inert
  finite-dimensional indecomposable Nichols algebra over the abelian group
  $\Z_2^k$, such as a disjoint union of $k$ diagrams of type $A_1$
\end{example}

\begin{corollary}\label{cor_Montgomery}
  Every group $G$ with $[G,G]\cong\Z_2$ admits a
  finite-dimensional indecomposable Nichols algebra $\B(\tilde{M})$. This
  answers for groups of this class positively a question raised by Susan
  Montgomery in 1995 \cite{Mont95}\cite{AS02}: There exist a
  finite-dimensional indecomposable pointed Hopf algebra with coradical
  $\k[G]$, namely the bosonization $\k[G]\#\B(M)$ with $\B(M)$ constructed in
  the previous example, or others constructed by Lemma \nref{lm_Disconnected}.
\end{corollary}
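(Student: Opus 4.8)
The plan is to derive this entirely from the constructions already in hand, reducing everything to a $2$-group. First I would note that $[G,G]\cong\Z_2$ is a normal subgroup of order $2$ and hence central in $G$, so $G$ is nilpotent of class at most $2$ and therefore splits as a direct product $G=G_{odd}\times G_2$ with $G_{odd}$ the odd-order part and $G_2$ the Sylow $2$-subgroup. Since $[G_{odd},G_{odd}]$ has odd order and sits inside $[G,G]\cong\Z_2$, it is trivial, so $G_{odd}$ is abelian; meanwhile $[G_2,G_2]$ is the $2$-part of $[G,G]$, hence $[G_2,G_2]\cong\Z_2$, so $G_2$ is a nonabelian $2$-group and in particular $2$-saturated (Definition \nref{def_Saturated}). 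This is the only group-theoretic input needed.

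Next I would construct a finite-dimensional indecomposable Nichols algebra over $G_2$ via the disconnected constructions. Write $V=G_2/G_2^2$ with $\dim_{\F_2}(V)=n$ and nullspace $V^\perp=Z(G_2)/G_2^2$ of dimension $k$; since $V/V^\perp$ carries a nondegenerate alternating form over $\F_2$, $n-k$ is even, and a symplectic basis of $V$ gives the orthogonal decomposition into $\tfrac{n-k}{2}$ hyperbolic planes and a totally isotropic part of dimension $k$, i.e. the numerical decomposition $(n,k)=\tfrac{n-k}{2}\,(2,0)+(0,k)$. The pair $(2,0)$ occurs in the list of Theorem \nref{thm_AllZpOrbifolds} as the unramified diagram $A_2$ (this is exactly the $\D_4$-example), and the inert part $(0,k)$ may be equipped with any finite-dimensional indecomposable Nichols algebra over $\Z_2^k$, for instance $k$ disjoint copies of $A_1$ with total algebra $\bigotimes_{i}\k[x_i]/(x_i^2)$. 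Applying Lemma \nref{lm_Disconnected} — valid because $G_2$ is $2$-saturated — then yields a finite-dimensional minimally indecomposable covering Nichols algebra $\B(\tilde M_2)$ over $G_2$ whose Dynkin diagram is a disjoint union of $A_2$'s and $A_1$'s.

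Finally I would glue in the abelian factor and bosonize. Over $G_{odd}$, writing $G_{odd}=\prod_j\Z_{m_j}$, take $M_{odd}=\bigoplus_j\O_{h_j}^{\chi_j}$ with $\chi_j(h_j)$ a primitive $m_j$-th root of unity and $\chi_j$ trivial on the remaining factors; then $\B(M_{odd})\cong\bigotimes_j\k[x_j]/(x_j^{m_j})$ is finite-dimensional and indecomposable, since its support $\{h_j\}$ generates $G_{odd}$ (if $G_{odd}$ is trivial this factor is simply dropped). Invoking Corollary \nref{cor_Nonsaturated} with $G_{ab}:=G_{odd}$ and $G_{sat}:=G_2$, the \ydm $M_{odd}\oplus\tilde M_2$ over $G=G_{odd}\times G_2$ is indecomposable with $\B(M_{odd}\oplus\tilde M_2)\cong\B(M_{odd})\otimes\B(\tilde M_2)$ finite-dimensional and Dynkin diagram the disjoint union of the two. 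Taking $\tilde M:=M_{odd}\oplus\tilde M_2$ and forming the bosonization $\k[G]\#\B(\tilde M)$ produces a finite-dimensional indecomposable pointed Hopf algebra with coradical $\k[G]$, which is exactly Montgomery's desideratum for this class of groups.

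I do not expect any real obstacle here: the corollary is an assembly of Theorem \nref{thm_AllZpOrbifolds}, Lemma \nref{lm_Disconnected} and Corollary \nref{cor_Nonsaturated}. The only points warranting a word of care are the parity observation $n\equiv k\pmod 2$ (which is what makes the decomposition into hyperbolic planes plus an isotropic part available, rather than the naive $(n,0)+(0,k)$ which is only legal when $n$ is even), the requirement that the abelian factor $G_{odd}$ carry a genuinely \emph{indecomposable} Nichols algebra — forcing the support to generate $G_{odd}$ — and the trivial remark that, since $[G,G]\cong\Z_2$ is by hypothesis nontrivial, the Sylow $2$-subgroup $G_2$ is automatically nonabelian, so the $2$-group part of the argument is never vacuous.
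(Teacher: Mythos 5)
Your proof is correct and follows essentially the same route as the paper: split $G=G_{odd}\times G_2$ using nilpotency, apply Lemma \nref{lm_Disconnected} to the $2$-saturated Sylow $2$-subgroup, and glue in the abelian odd part via Corollary \nref{cor_Nonsaturated}. The only (harmless) deviation is that you decompose the nondegenerate part of $V$ into hyperbolic planes each carrying an unramified $A_2$ (the $\D_4$ example), whereas the paper takes a single unramified $A$-diagram on the whole nondegenerate part plus $k$ inert copies of $A_1$ --- both are admissible numerical decompositions in Lemma \nref{lm_Disconnected}.
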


\section{Explicit Examples Of Nondiagonal Nichols
Algebras}\label{sec_NondiagonalExamples}

The covering Nichols algebras $\B(\tilde{M})$ over nonabelian groups $G$
constructed in this
article are by construction non-faithful, because the $G$-action is the
pullback of the action of the quotient $\Gamma$. Especially the commutators
$[G,G]$ act trivially, so the braiding of $\tilde{M}$ is still diagonal.\\

However, over $G$ there may exist Doi twists $\B(\tilde{M}_\eta)$ by a
$G$-group-2-cocycle $\eta\in Z^2(G,\k^\times)$, such that
the action of the subgroup $\Sigma^*=[G,G]\cong\Z_2$ on $\tilde{M}_\eta$ is
nontrivial. Then $\tilde{M}$ has a nondiagonal braiding, and it even may be
faithful, depending on the precise action of the other central elements.\\

In the following we shall derive a criterion for the existence of such
nondiagonal twistings and give a list of examples for covering Nichols algebras
for Rank $2,3,4$.

\subsection{Doi Twists And Matsuomots Spectral Sequence}

	We already noted in Remark \nref{rem_twistedAction}, that a Doi twist of
	the Nichols algebra produces the following twisted action on the
	twisted \ydm $\tilde{M}_\eta$:
	$$a._\eta v_h=\eta(aga^{-1},a)\eta^{-1}(a,g)a.v_h$$
	Hence the central subgroup $\Sigma^*\subset G$ with trivial action on
	$\tilde{M}$ acts on $\tilde{M}_\eta$ 
	by multiplication with the scalar 
	$$\gamma(\eta)(a,g):=\eta(g,a)\eta^{-1}(a,g) \qquad
	  \gamma(\eta)\in\Sigma^*\otimes G$$
	This expression appears
	already in literature on group cohomology, namely in {Matsumoto's
	extension} \cite{IM64} for central group extensions of the
	general Lyndon-Hochschild-Serre spectral sequence\index{Matsumoto's
	spectral sequence}:
	
	$$1\rightarrow \Gamma^*\rightarrow G^*\rightarrow \Sigma\rightarrow
	  H^2(\Gamma,\k^\times)\rightarrow H^2(G,\k^\times)_\Sigma
	  \stackrel{\gamma}{\rightarrow} \Sigma^*\otimes G$$
	$$\qquad$$

	Here, $H^2(G,\k^\times)_\Sigma$ denotes the kernel of the restriction
	map to $\Sigma^*$ and the map $\gamma$ yields as expected a
	{bimultiplicative pairing} that exactly matches the expression above!

	\begin{theorem}\label{thm_Faithful}
	  Let $\Sigma^*=\Z_p\rightarrow G\rightarrow \Gamma$ be a
	  stem-extension, $M$ a $\Gamma$-\ydm with finite-dimensional
	  Nichols algebra $\B(M)$ and $\tilde{M}$ be the covering
	  $G$-\ydmP. If we assume that the following holds
	  $$p\frac{|H^2(G,\k^\times)|}{|H^2(\Gamma,\k^\times)|}>1$$
	  then there exists a group-2-cocycle $\eta\in Z^2(G,\k^\times)$,
	  such that the Doi twist $\tilde{M}_\eta$ has nontrivial action of
	  $\Sigma^*$ and is hence nondiagonal.
	\end{theorem}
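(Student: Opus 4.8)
The plan is to deduce the statement from an order count inside Matsumoto's exact sequence reproduced above. Recall from the discussion preceding the theorem that, for $\eta\in Z^2(G,\k^\times)$, the central subgroup $\Sigma^*\subset G$ acts on a homogeneous layer $(\tilde M_\eta)_g$ of the Doi twist by the scalar $\gamma(\eta)(a,g)=\eta(g,a)\eta^{-1}(a,g)$, $a\in\Sigma^*$, and that this scalar depends only on $[\eta]\in H^2(G,\k^\times)$ (for central $a$ a coboundary contributes trivially). For fixed $a$ the assignment $g\mapsto\gamma(\eta)(a,g)$ is a homomorphism $G\to\k^\times$, so if $\gamma(\eta)\neq 0$ it is nontrivial on any generating set of $G$; since $\tilde M$ is indecomposable over the stem extension its support generates $G$, hence $\Sigma^*$ then acts nontrivially on $\tilde M_\eta$. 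Finally $\tilde M_\eta$ is still indecomposable, and on the basis diagonalizing the braiding of an indecomposable diagonal \ydm over $G$ every element of the support acts by a scalar on each basis vector (forced by $c(v\otimes w)=(\deg v).w\otimes v$); these elements generate $G$, so $G$ acts by commuting operators, i.e.\ through an abelian quotient, on which $\Sigma^*\subseteq[G,G]$ acts trivially. Thus a nontrivial $\Sigma^*$-action makes $\tilde M_\eta$ nondiagonal, and it suffices to find $\eta$ with $\gamma(\eta)\neq 0$, i.e.\ to show $\operatorname{im}(\gamma)\neq\{0\}$.

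I would then compute the orders occurring in the sequence. As $\k=\C$ and $\Sigma^*\cong\Z_p$, we have $H^2(\Sigma^*,\k^\times)=H^2(\Z_p,\C^\times)=0$ (since $\C^\times$ is divisible), so the restriction $H^2(G,\k^\times)\to H^2(\Sigma^*,\k^\times)$ vanishes and $H^2(G,\k^\times)_\Sigma=H^2(G,\k^\times)$. The decisive point is that, the extension being a \emph{stem} extension, $\Sigma^*\subseteq[G,G]$, so every character of $G$ — an element of $G^*=\Hom(G/[G,G],\k^\times)$ — restricts trivially to $\Sigma^*$; hence the map $G^*\to\Sigma$ in Matsumoto's sequence is zero. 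By exactness at $\Sigma$ the transgression $\Sigma\to H^2(\Gamma,\k^\times)$ is then injective, so its image — which by exactness at $H^2(\Gamma,\k^\times)$ is the kernel of the inflation $\pi^*\colon H^2(\Gamma,\k^\times)\to H^2(G,\k^\times)$ — has order exactly $|\Sigma|=p$.

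Combining, exactness at $H^2(G,\k^\times)_\Sigma=H^2(G,\k^\times)$ gives $\Ker(\gamma)=\operatorname{im}(\pi^*)$, so
$$|\operatorname{im}(\gamma)|=\frac{|H^2(G,\k^\times)|}{|\operatorname{im}(\pi^*)|}=\frac{|H^2(G,\k^\times)|}{|H^2(\Gamma,\k^\times)|/p}=p\,\frac{|H^2(G,\k^\times)|}{|H^2(\Gamma,\k^\times)|}>1$$
by hypothesis; as the order of a finite group it is then at least $2$, so $\operatorname{im}(\gamma)\neq\{0\}$ and any $\eta\in Z^2(G,\k^\times)$ whose class lies outside $\Ker(\gamma)$ does the job. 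I expect no serious obstacle; the one step needing care is the use of the stem hypothesis $\Sigma^*\subseteq[G,G]$: without it the map $G^*\to\Sigma$ need not vanish, the transgression need not be injective, $|\Ker(\pi^*)|$ could drop to $1$, and the count would give only $|\operatorname{im}(\gamma)|\geq|H^2(G,\k^\times)|/|H^2(\Gamma,\k^\times)|$, which is too weak for the stated bound. The reduction in the first paragraph relies (twice) on indecomposability of $\tilde M$.
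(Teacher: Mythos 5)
Your proof is correct and takes essentially the same route as the paper: the paper's argument is exactly this order count in Matsumoto's sequence, using $G^*=\Gamma^*$ (your observation that the stem condition kills $G^*\to\Sigma$) and $H^2(\Z_p,\k^\times)=1$ to get $|\Im(\gamma)|=p\,|H^2(G,\k^\times)|/|H^2(\Gamma,\k^\times)|$. Your first paragraph supplies a justification, left implicit in the paper, that a nonzero $\gamma(\eta)$ actually yields a nontrivial and hence nondiagonal $\Sigma^*$-action on $\tilde M_\eta$; this correctly uses the (link-)indecomposability of $\tilde M$, which is implicit in the paper's setting.
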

	\begin{proof}
	  We use Matsumoto's sequence to
	  enumerate the number of different $\Sigma^*$-actions 
	  $|\Im(\gamma)|$, that can be achieved by Doi twisting. Note that
	\begin{itemize}
	  \item For stem-extensions we have $G^*=\Gamma^*$, so the first terms
	    disappear.
	  \item In our case $\Sigma^*=\Z_p$ we have
	    $H^2(\Sigma^*,\k^\times)=1$, so the restriction kernel is all
	    $H^2(G,\k^\times)_\Sigma=H^2(G,\k^\times)$ 
	\end{itemize}
	so Matsumoto's sequence takes the following form
        $$1\rightarrow \Sigma\rightarrow
	  H^2(\Gamma,\k^\times)\rightarrow H^2(G,\k^\times)
	  \stackrel{\gamma}{\rightarrow} \Sigma^*\otimes G$$
	In particular, counting elements shows the claim: 
	$$|\Im(\gamma)|=|H^2(G,\k^\times)|\cdot
	  |H^2(\Gamma,\k^\times)|^{-1}\cdot |\Z_p|>1$$
	\end{proof}

\begin{remark}\label{rem_reconstruction2}
  This approach has also classificatory value in special cases: In \cite{Len12}
  sections 7.2 -- 7.4 we prove for several exemplary groups $G$ of order $16$
  and
  $32$, that these Doi twists already exhaust all $\Sigma^*$-actions, that
  are possible on a $G$-\ydm with finite dimensional Nichols algebra by
  \cite{HS10a}. Thereby, {\bf all}
  finite-dimensional Nichols algebras over $G$ are Doi twists of covering
  Nichols algebras.\\
  Especially for the last  cases in \cite{Len12} section 7.4, having $G=\Z_2^2$
  and a certain  commutator structure, there is no possible covering Nichols
  algebra and this disproves existence of finite-dimensional
  link-indecomposable Nichols algebras over these groups at all.\\
\end{remark}

\subsection{Examples Of Rank 2}
A symplectic vector space of rank $2=2n+k$ can be of the following two types:\\

Type
$(n,k)=(0,2)$ means that $\bar{G},G$ are abelian.\\

Type $(n,k)=(1,0)$ induces Nichols algebras over the following type of 
group:
$$\Z_2\to G \to\Z_2\times \Z_2\qquad [G,G]=\Z_2\qquad Z(G)=G^2$$
Take as example $G=\D_4$, then $H^2(G,\k^\times)=\Z_2=H^2(\Gamma,\k^\times)$
and hence by Theorem \nref{thm_Faithful} this 2-cocyle causes a nondiagonal Doi
twist (the other stem extension $\Q_8$ has not enough cohomology for
nondiagonal twists).\\
 Recall the generators
$g,h,\epsilon\in \D_4$ with $gh=\epsilon hg, g^2=\epsilon,h^2=\epsilon^2=1$.\\

A symplectic vector space of type $(1,0)$ admits by Theorem
\nref{thm_SymplecticRootsystem} a symplectic root system of type $A_2$, hence we
obtained in Section \nref{sec_Unramified} an unramified covering Nichols
algebra $A_2\times A_2 \mapsto A_2$. This yields the well known example,
discussed in Section \nref{sec_D4}:
\begin{example}[Type $A_2$, see \cite{MS00} Example 6.5]
$$M=\O_{[h]}^\chi\oplus \O_{[gh]}^\phi$$
Consider the diagonalizable  Yetter-Drinfel'd module $M$ with 
$$\chi(h)=\phi(gh)=-1 \qquad \chi(\epsilon)=\phi(\epsilon)=1$$
as well the nondiagonal (even faithful) Doi twist $M_\eta$ with 
$$\chi(h)=\phi(gh)=-1 \qquad \chi(\epsilon)=\phi(\epsilon)=-1$$
Both Nichols algebras $\B(M),\B(M_\eta)$ are standard of type $A_2$, but
possess a finer PBW-basis of type $A_2\times A_2$. Hilbert series and dimension
are hence:
$$\H(t)=\left([2]_t^2[2]_{t^2}\right)^2 
\qquad\dim=2^{6}=64$$ 
\end{example}

\begin{remark}\label{rem_reflection}
 Here as well as in the following examples, a reflection $R_i$ (see
 \cite{AHS10}) turns the Yetter-Drinfel'd module $M$ into e.g.
  $$R_1M=\O_{[h]}^{\chi'}\oplus \O_{[g]}^{\phi'}$$ 
  where now the generator $g$ has order $4$ (see \cite{Len12} Sec. 5.3 and 8).
  This corresponds to the choice of a different $2$-cocycle $\sigma'\in
  Z^2(\Z^2,\k^\times)$ in the same cohomology class then $\sigma$, especially
  the induced symplectic forms $\sigma(g,h)\sigma^{-1}(h,g)$ coincides.
  Equivalently, for a given $2$-cocycle it corresponds to the choice of a
  different (but isometric) symplectic root system $(\overline{h},\overline{g})$
instead  of $(\overline{h},\overline{gh})$. \\

  Note that we can uniformly write the diagonal and nondiagonal examples and
  all reflections by giving the following character relations and the
  symplectic root system directly, yielding a class of \ydms
  as in \cite{HV13}:
  $$M=\O_{[v]}^\chi\oplus \O_{[w]}^\phi
  \qquad\bar{v}\not\perp\bar{w}
  \qquad \chi(v)=\phi(w)=-1 \qquad \chi(\epsilon)\phi(\epsilon)=1
  $$
\end{remark}

\subsection{Examples Of Rank 3}
A symplectic vector space of rank $3=2n+k$ can be of the following two types: \\

Type $(n,k)=(0,3)$ means that $\bar{G},G$ are abelian.\\

Type $(n,k)=(1,1)$ induces Nichols algebras over the following type of group:
$$\Z_2\to G\to\Z_2^3\qquad [G,G]=\Z_2\qquad Z(G)/G^2=\Z_2$$
Take as example for this type $G=\D_4\times \Z_2$, then by K\"uneth's formula 
$H^2(G,\k^\times)=\Z_2^3=H^2(\Gamma,\k^\times)$.
Hence by Theorem \nref{thm_Faithful} there is a 2-cocyle that causes a
nondiagonal Doi twist. Recall the $\D_4$-generators
$g,h,\epsilon$ and add a
central generator 
$z$.

A symplectic vector space of type $(1,1)$ admits by Theorem
\nref{thm_SymplecticRootsystem} symplectic root systems of type $A_3$ or
$A_2\times A_1$, hence we get the following Nichols algebras:
\begin{itemize}
\item From the symplectic root system of type $A_3$ we obtained in Section
\nref{sec_Unramified} an unramified covering Nichols algebra $A_3\times A_3
\mapsto A_3$ as follows:
\begin{example}[Type $A_3$]
$$M=\O_{[h]}^\chi \oplus \O_{[gh]}^\phi\oplus \O_{[zh]}^\psi$$
Consider the diagonalizable  Yetter-Drinfel'd modules $M$ with 
$$\chi(h)=\phi(gh)=\psi(zh)=-1 \qquad
\chi(\epsilon)=\phi(\epsilon)=\psi(\epsilon)=1$$
$$\chi(zh)\psi(h)=1$$
as well as nondiagonal (for some choices faithful) Doi
twists $M_\eta$ with 
$$\chi(h)=\phi(gh)=\psi(zh)=-1 \qquad
\chi(\epsilon)=\phi(\epsilon)=\psi(\epsilon)=-1$$
$$\chi(zh)\psi(h)=1$$
All Nichols algebras $\B(M),\B(M_\eta)$ are standard of type $A_3$, but
possess a finer PBW-basis of type $A_3\times A_3$. Hilbert series and dimension
are hence:
$$\H(t)=\left([2]_t^3[2]_{t^2}^2[2]_{t^3}\right)^2
\qquad \dim=2^{12}=4,096$$
\end{example}

\item From the symplectic root system of type $A_2\times A_1$ we obtained in
Section \nref{sec_RamifiedAB} a ramified covering Nichols algebra $A_5\mapsto
C_3$ as follows:
\begin{example}[Type $C_3$]
$$M=\O_{[h]}^\chi \oplus \O_{[gh]}^\phi\oplus \O_{\{z\}}^\psi$$
Consider the diagonalizable  Yetter-Drinfel'd modules $M$ with 
$$\chi(h)=\phi(gh)=\psi(z)=-1 \qquad
\chi(\epsilon)=\phi(\epsilon)=\psi(\epsilon)=1$$
$$\chi(z)\psi(h)=1\qquad \phi(z)\psi(gh)=-1$$
as well as nondiagonal (for some choices faithful) Doi
twists $M_\eta$ with 
$$\chi(h)=\phi(gh)=\psi(z)=-1 \qquad
\chi(\epsilon)=\phi(\epsilon)=-1\qquad \psi(\epsilon)=1$$
$$\chi(z)\psi(h)=1\qquad \phi(z)\chi(gh)=-1$$
All Nichols algebras $\B(M),\B(M_\eta)$ are standard of type $C_3$, but
possess a finer PBW-basis of type $A_5$. Hilbert series and dimension are hence:
$$\H(t)=[2]_t^5[2]_{t^2}^4[2]_{t^3}^3[2]_{t^4}^2[2]_{t^5}
\qquad\dim=2^{15}=32,768$$
\end{example}

\item Finally we have Nichols algebras with disconnected Dynkin diagram
$A_1\times A_2$ containing an abelian support and an $A_2$-Nichols algebra over
$\D_4$.
\end{itemize}

\subsection{Examples Of Rank 4}
A symplectic vector space of rank $4=2n+k$ can be of the following three
types:\\

Type
$(n,k)=(0,4)$ means that $\bar{G},G$ are abelian. \\

Type $(n,k)=(2,0)$
admits a symplectic root system of type $A_4$. The respective Nichols
algebra over $G=\D_4\ast\D_4$ has Hilbert series 
$$H(t)=\left([2]_t^4[2]_{t^2}^3[2]_{t^3}^2[2]_{t^4}\right)^2
\qquad\dim=2^{20}=1,048,576$$
It is thoroughly discussed in Example \nref{sec_exampleA4}, but over groups of
order $16$ and $32$ we were not able to find a nondiagonal Doi twist (is there
none at all?).\\

Type
$(n,k)=(1,2)$ induces Nichols algebras over the following type
of group:
$$\Z_2\to G\to\Z_2^4\qquad [G,G]=\Z_2\qquad Z(G)/G^2=\Z_2^2$$
Take as example for this type $G=\D_4\times \Z_2^2$, then by K\"uneth's formula 
$H^2(G,\k^\times)=\Z_2^6=H^2(\Gamma,\k^\times)$.
Hence by Theorem \nref{thm_Faithful} there is a 2-cocyle that causes a
nondiagonal Doi twist. Recall the $\D_4$-generators
$g,h,\epsilon$ and add central generators 
$z,w$.

A symplectic vector space of type $(n,k)=(1,2)$ admits by Theorem
\nref{thm_SymplecticRootsystem} symplectic root systems of type $D_4$ or
$A_1\times A_3$ or $A_1\times A_1\times A_2$, hence we get the
following Nichols algebras:
\begin{itemize}
\item From the symplectic root system of type $D_4$ we obtained in Section
\nref{sec_Unramified} an unramified covering Nichols algebra $D_4\times D_4
\mapsto D_4$ as follows:

\begin{example}[Type $D_4$]
$$M=\O_{[h]}^\chi \oplus \O_{[gh]}^\phi\oplus \O_{[zh]}^\psi\oplus
\O_{[wh]}^\rho$$
Consider the diagonalizable  Yetter-Drinfel'd modules $M$ with 
$$\chi(h)=\phi(gh)=\psi(zh)=\rho(wh)=-1 \qquad
\chi(\epsilon)=\phi(\epsilon)=\psi(\epsilon)=\rho(\epsilon)=1$$
$$\chi(zh)\psi(h)=1\qquad \chi(wh)\rho(h)=1\qquad \psi(wh)\rho(zh)=1$$
as well as nondiagonal (for some choices faithful) Doi twists $M_\eta$
with 
$$\chi(h)=\phi(gh)=\psi(zh)=\rho(wh)=-1 \qquad
\chi(\epsilon)=\phi(\epsilon)=\psi(\epsilon)=\rho(\epsilon)=-1$$
$$\chi(zh)\psi(h)=1\qquad \chi(wh)\rho(h)=1\qquad \psi(wh)\rho(zh)=1$$
All Nichols algebras $\B(M),\B(M_\eta)$ are standard of type $D_4$, but
possess a finer PBW-basis of type $D_4\times D_4$. Hilbert series and dimension
are hence:
$$H(t)=\left([2]_t^4[2]_{t^2}^3[2]_{t^3}^3[2]_{t^4}[2]_{t^5}\right)^2
\qquad\dim=2^{24}=16,777,216$$
\end{example}

\item From the symplectic root system of type $A_1\times A_3$ we obtained in
Section \nref{sec_RamifiedAB} a ramified covering Nichols algebra $A_7\mapsto
C_4$ as follows:

\begin{example}[Type $C_4$]
$$M=\O_{[h]}^\chi \oplus \O_{[gh]}^\phi\oplus \O_{[zh]}^\psi\oplus
\O_{\{w\}}^\rho$$
Consider the diagonalizable  Yetter-Drinfel'd modules $M$ with
$$\chi(h)=\phi(gh)=\psi(zh)=\rho(w)=-1 \qquad
\chi(\epsilon)=\phi(\epsilon)=\psi(\epsilon)=\rho(\epsilon)=1$$
$$\chi(zh)\psi(h)=1\qquad \chi(w)\rho(h)=1\qquad\phi(w)\rho(gh)=1\qquad
\psi(w)\rho(zh)=-1$$
as well as nondiagonal (for some choices faithful) Doi twists $M_\eta$
with 
$$\chi(h)=\phi(gh)=\psi(zh)=\rho(w)=-1 \qquad
\chi(\epsilon)=\phi(\epsilon)=\psi(\epsilon)=-1\qquad \rho(\epsilon)=1$$
$$\chi(zh)\psi(h)=1\qquad \chi(w)\rho(h)=1\qquad\phi(w)\rho(gh)=1\qquad
\psi(w)\rho(zh)=-1$$
All Nichols algebra $\B(M),\B(M_\eta)$ are standard of type $C_4$, but
possess a finer PBW-basis of type $A_7$. Hilbert series and dimension are hence:
$$H(t)=[2]_t^7[2]_{t^2}^6[2]_{t^3}^5[2]_{t^4}^4[2]_{t^5}^3[2]_{t^6}^2[2]_{t^7}$$
$$\dim=2^{28}=268,435,456$$
\end{example}

\item From the symplectic root system of type $A_1\times A_1\times A_2$ we
obtained in Section \nref{sec_RamifiedEF} a ramified covering Nichols algebra
$E_6\mapsto F_4$ as follows:
\begin{example}[Type $F_4$]
$$M=\O_{[h]}^\chi\oplus \O_{[gh]}^\phi\oplus
\O_{\{z\}}^\psi\oplus\O_{\{w\}}^\rho$$
Consider the diagonalizable  Yetter-Drinfel'd modules $M$ with
$$\chi(h)=\phi(gh)=\psi(z)=\rho(w)=-1 \qquad
\chi(\epsilon)=\phi(\epsilon)=\psi(\epsilon)=\rho(\epsilon)=1$$
$$\chi(z)\psi(h)=1\qquad \chi(w)\rho(h)=1\qquad 
\phi(w)\rho(gh)=1$$
$$\phi(z)\psi(gh)=-1\qquad \psi(w)\rho(z)=-1$$
as well as nondiagonal (for some choices faithful) Doi twists $M_\eta$
with 
$$\chi(h)=\phi(hg)=\psi(z)=\rho(w)=-1 \qquad
\chi(\epsilon)=\phi(\epsilon)=-1\qquad \psi(\epsilon)=\rho(\epsilon)=1$$
$$\chi(z)\psi(h)=1\qquad \chi(w)\rho(h)=1\qquad 
\phi(w)\rho(gh)=1$$ 
$$\phi(z)\psi(gh)=-1\qquad \psi(w)\rho(z)=-1$$
All Nichols
algebras $\B(M),\B(M_\eta)$ are standard of type $F_4$, but
possess a finer PBW-basis of type $E_6$. Hilbert series and dimension are hence:
$$\H(t)=[2]^6_{t} [2]^5_{t^2}[2]^5_{t^3}[2]^5_{t^4}
[2]^4_{t^5}[2]^3_{t^6}[2]^3_{t^7} [2]^2_{t^8}
[2]_{t^9}[2]_{t^{10}}[2]_{t^{11}}$$ 
$$\dim=2^{36}=68,719,476,736$$
\end{example}

\item We have 
again several Nichols algebras $X_2^{ab}\cup A_2$ with disconnected Dynkin
diagram and partly abelian support from the symplectic root system
$A_1\times A_1\times A_2$. Here $X_2^{ab}$ may be any diagonal Nichols algebra
of rank $2$. Moreover, we get a ramified $A_1^{ab}\times C_3$. From the
other symplectic root system $A_1\times A_3$ we get $A_1^{ab}\times A_3$.\\
\end{itemize}

\section*{Acknowledgement}
The author wishes to thank  I. Heckenberger, H.-J. Schneider and C. Schweigert
for instructive discussions and helpful comments.\\

The author also wishes to thank the referee for several helpful suggestions,
most notably concerning the structure of Section 6.\\

\bibliographystyle{alpha}
\bibliography{LargeRankNicholsAlgebra}

\end{document}